\newtheorem{theorem}{Theorem}[section]
\newtheorem{lemma}[theorem]{Lemma}
\newtheorem{proposition}[theorem]{Proposition}
\newtheorem{corollary}[theorem]{Corollary}
\theoremstyle{definition}
\newtheorem{examp}[theorem]{Example}
\newtheorem{remar}[theorem]{Remark}
\numberwithin{equation}{section}
\newenvironment{remark}{\begin{remar}\rm}{\gzaun\end{remar}}
\newcommand{\gzaun}{\unskip\nobreak\hfil\penalty50%
\hskip1em\hbox{}\nobreak\hfil%
$\#$\parfillskip=0pt\finalhyphendemerits=0}
\newenvironment{example}{\begin{examp}\rm}{\diams\end{examp}}
\newcommand{\diams}{\unskip\nobreak\hfil\penalty50%
\hskip1em\hbox{}\nobreak\hfil%
$\diamondsuit$\parfillskip=0pt\finalhyphendemerits=0}
\newcommand{\bfind}[1]{\index{#1}{\bf #1}}
\newcommand{\n}{\par\noindent}
\newcommand{\sn}{\par\smallskip\noindent}
\newcommand{\mn}{\par\medskip\noindent}
\newcommand{\bn}{\par\bigskip\noindent}
\newcommand{\pars}{\par\smallskip}
\newcommand{\parm}{\par\medskip}
\newcommand{\parb}{\par\bigskip}
\newcommand{\isom}{\simeq}
\newcommand{\Aut}{\mbox{\rm Aut}\,}
\newcommand{\card}{\mbox{\rm card}\,}
\newcommand{\trdeg}{\mbox{\rm trdeg}\,}
\newcommand{\supp}{\mbox{\rm supp}}
\newcommand{\appr}{\mbox{\rm appr}}
\newcommand{\rr}{\mbox{\rm rr}\,}
\newcommand{\cO}{\mathcal O}
\newcommand{\cA}{\mathcal A}
\newcommand{\cC}{\mathcal C}
\newcommand{\cT}{\mathcal T}
\newcommand{\cN}{\mathcal N}
\newcommand{\cV}{\mathcal V}
\newcommand{\cL}{\mathcal L}
\newcommand{\cS}{\mathcal S}
\newcommand{\N}{\mathbb N}
\newcommand{\Q}{\mathbb Q}
\newcommand{\R}{\mathbb R}
\newcommand{\Z}{\mathbb Z}
\newcommand{\bd}{{\bf d}}
\newcommand{\bh}{{\bf h}}
\newcommand{\bhsc}{\mbox{\scriptsize\bf h}}
\begin{document}

\title[Approximation types]{Approximation types describing extensions of valuations 
to rational function fields}

\author{F.-V.~Kuhlmann}
\address{Institute of Mathematics, University of Szczecin, ul.~Wielkopolska 15
70-451 Szczecin, Poland}
\email{fvk@usz.edu.pl}
\thanks{The author would like to thank Enric Nart, Giulio Peruginelli and Anna Rzepka 
for reading the manuscript and providing comments and corrections.}


\date{November 19, 2021}

\begin{abstract}
We introduce the notion of {\it approximation type} for the partial, and in certain cases
the total description of extensions of a given valuation from a field $K$ to the rational
function field $K(x)$. To every extension, a unique approximation type of $x$ over $K$ is 
associated, while $x$ may be the limit of many pseudo Cauchy sequences. Approximation types
also provide information in cases where the extensions are not immediate, and we prove that 
they correspond bijectively to the extensions when $K$ is algebraically closed or lies dense 
in its algebraic closure.
\end{abstract}

\maketitle

\tableofcontents

%
%
\section{Introduction}
In this paper we will work with (Krull) valuations on fields and their extensions to
rational function fields. As we will show that under certain natural conditions, these 
extensions are uniquely determined by what we call {\it approximation types}, it is
important to note from the start that we {\it always identify equivalent valuations}.
For basic information on valued fields and for notation, see Section~\ref{sectprel}.

Take a valued field $(K,v_0)$. It is an important task to describe, analyze and 
classify all extensions $v$ of the valuation $v_0$ from $K$ to the rational function
field $K(x)$. In order to be able to compute the value of every element of $K(x)$ 
with respect to $v$, it suffices to be able to compute
the value of all polynomials in $x$, that is, we only have to deal with the polynomial
ring $K[x]$. Indeed, if $f,g\in K[x]$, then necessarily, $v\frac f g =vf-vg$. We know the
values of all elements in $K$. If in addition we know the 
value $vx$, then everything would be easy if for every polynomial
\begin{equation}                                   \label{pol}
f(x)\>=\>\sum_{i=0}^n c_ix^i\in K[x]
\end{equation}
the following equation would hold:
\begin{equation}                                            \label{minval}
vf(x)\>=\> \min_{0\leq i\leq n} v_0 c_i + ivx\>.
\end{equation}
Indeed, we can define valuations on $K(x)$ in this way by choosing $vx$ to be any 
element in some ordered abelian group which contains $vK$. If we choose $vx=0$, we 
obtain the \bfind{Gau{\ss} valuation}.

But what if Equation~\ref{minval} does not always hold? Then there are 
{\it polynomials of unexpected value}, the value of which is larger than the minimum 
of the values of its monomials. This observation has led to the theory of {\it key polynomials}, on which by now an abundant number of articles are available. 

\parm
Let us give a basic classification of all extensions $v$ of the valuation $v_0$ of 
$K$ to $K(x)$. The \bfind{rational rank} of an ordered abelian 
group $\Gamma$ is $\rr \Gamma:= \dim_\Q \Q\otimes_{\Z} \Gamma\>$ (note that
$\Q\otimes_{\Z} \Gamma$ is the \bfind{divisible hull} of $\Gamma$). As the 
{\it Abhyankar Inequality}
\begin{equation}                            \label{111}
1\>=\>\trdeg K(x)|K\>\geq\>\rr vK(x)/v_0 K\,+\,\trdeg K(x)v|Kv_0
\end{equation}
holds by Proposition~\ref{prelBour}, there are the following mutually
exclusive cases:
\sn
$\bullet$ \ $(K(x)|K,v)$ is \bfind{valuation-algebraic} (case $1>0+0$): \par
$vK(x)/v_0K$ is a torsion group and $K(x)v|Kv_0$ is algebraic,
\sn
$\bullet$ \ $(K(x)|K,v)$ is \bfind{value-transcendental} (case $1=1+0$): \par
$vK(x)/v_0K$ has rational rank 1, but $K(x)v|Kv_0$ is algebraic,
\sn
$\bullet$ \ $(K(x)|K,v)$ is \bfind{residue-transcendental} (case $1=0+1$): \par
$K(x)v|Kv_0$ has transcendence degree 1, but $vK(x)/v_0 K$ is a torsion
group.
\mn
We will combine the value-transcendental case and the
residue-transcendental case by saying that
\sn
$\bullet$ \ $(K(x)|K,v)$ is \bfind{valuation-transcendental}: \par
$vK(x)/v_0K$ has rational rank 1, or $K(x)v|Kv_0$ has transcendence
degree 1.
\sn
A special case of the valuation-algebraic case is the following:
\sn
$\bullet$ \ $(K(x)|K,v)$ is {\bf immediate}: \par
$vK(x)=v_0K$ and $K(x)v=Kv_0$.
\sn
For more details on this notion, see Section~\ref{sectie}.

\begin{remark}                              \label{limitrem}
It was observed by several authors that an immediate extension
of $v_0$ from $K$ to $K(x)$ can be represented as a limit of an infinite
sequence of residue-transcendental extensions; see e.g.\ \cite{[APZ3]}.
The approach is particularly important because residue-transcendental
extensions behave better than other types of extensions: the
corresponding extensions of value group and residue field are finitely
generated (see Corollary~2.7 in \cite{KTrans}), and they do not generate a defect
that is not already present in $(K,v_0)$:
see the Generalized Stability Theorem in \cite{[K8]} and its applications
in \cite{[K--K1],[K7]}.
\end{remark}

\parb
We denote by $\tilde K$ the algebraic closure of $K$. We will assume throughout that 
$v$ is extended to $\widetilde{K(x)}$; this also induces an extension of $v$ from $K$ 
to $\tilde K$. In this way, we associate to $(K(x)|K,v)$ the extension $(\tilde{K}(x)
|\tilde{K},v)$. Note that the value group of an algebraically closed valued field is
divisible, and its residue field is also algebraically closed. Further, $v\tilde{K}/
v_0K$ is a torsion group, and $\tilde{K}v|Kv_0$ is algebraic.

If $K$ is algebraically closed, then $v_0K$ is divisible and $Kv_0$ is
algebraically closed. In this case, for any extension $(K(x)|K,v)$ there
are only the following mutually exclusive cases:
\sn
$(K(x)|K,v)$ is immediate: \ $vK(x)=v_0K$ and $K(x)v=Kv_0$,\n
$(K(x)|K,v)$ is value-transcendental: \ $\rr vK(x)/v_0K=1$,
but $K(x)v=Kv_0$,\n
$(K(x)|K,v)$ is residue-transcendental: \ $\trdeg K(x)v|Kv_0=1$,
but $vK(x)=v_0K$.
\sn
As a consequence, if $(K,v_0)$ is an arbitrary valued field and we have an arbitrary
extension $v$ of $v_0$ to $K(x)$, then 
\sn
$\bullet$ \ $(K(x)|K,v)$ is valuation-algebraic if and only if\n
\qquad $(\tilde{K}(x)|\tilde{K},v)$ is immediate,
\n
$\bullet$ \ $(K(x)|K,v)$ is value-transcendental if and only if\n
\qquad $(\tilde{K}(x)|\tilde{K},v)$ is value-transcendental,
\n
$\bullet$ \ $(K(x)|K,v)$ is residue-transcendental if and only if\n
\qquad $(\tilde{K}(x)|\tilde{K},v)$ is residue-transcendental.

\parm
Ostrowski in \cite{Os} and Kaplansky in \cite{[Ka]} gave us a powerful tool for the
analysis and the construction of immediate extensions: {\it pseudo Cauchy sequences}
(also called {\it pseudo convergent sequences}). As is the case for Cauchy sequences,
an element in a valued field extension that is a limit of a pseudo Cauchy sequence 
will in general be a limit of many different pseudo Cauchy
sequences. It is therefore desirable to have a unique object that can readily be 
assigned to an element in a valued field extension and that contains all information 
that is contained in pseudo Cauchy sequences, and possibly more. We will define such 
objects, called {\it approximation types}, in Section~\ref{sectat}. They are nests of
ultrametric balls; for basic definitions and properties of ultrametric balls, see
Section~\ref{sectballs}. In a given extension $(K(x)|K,v)$, the 
approximation type of $x$ over $K$ is obtained by intersecting all ultrametric balls
with center $x$ in $(K(x),v)$ with $K$.

In Section~\ref{sectiat} we define {\it immediate approximation types}, and in
Section~\ref{sectiatpCs} we describe how to obtain them from pseudo Cauchy sequences, 
and vice versa. In Section~\ref{sectalgtriat} we prove the analogues of Kaplansky's basic
theorems for pseudo Cauchy sequences in the language of approximation types. Immediate 
approximation types were introduced and extensively used in \cite{KV}.

Pseudo Cauchy sequences and immediate approximation types are suitable tools to describe 
immediate extensions $(K(x)|K,v)$, but they do not carry enough information to describe 
extensions that are not immediate. For that reason, the new notions of {\it pseudo 
monotone sequences} and {\it pseudo divergent sequences} were introduced and used in 
\cite{Ch,PS2}. However, like pseudo Cauchy sequences they are not unique objects 
associated with the valuation $v$ and the element $x$. Moreover, they appear to be 
somewhat unnatural constructs for capturing the necessary information. Therefore, we 
propose to use the more uniform approximation types in place of all of these 
sequences. Still it should be noted that like these sequences, the approximation types 
cannot describe all extensions when $K$ is not algebraically closed. Nevertheless, we will
define in Sections~\ref{sectpure} and~\ref{sectalpure} important subclasses of extensions 
which can be fully described.

\pars
We fix a valuation $v_0$ on $K$ and denote by the letter\footnote{Note that $\cV$ is 
the calligraphic version of the capital letter V, and certainly {\it not} a capital
greek letter nu, as the capital version of $\nu$ in the Greek alphabet is N. 
Addressing $\cV$ as ``capital nu'' would be something capitally new.} 
$\cV$ the set of all extensions of
$v_0$ to $K(x)$. To every $v\in\cV$ we then associate the approximation type of $x$ 
with respect to $v$, which we denote by $\appr_v (x,K)$. By $\cA$ we denote the 
set of all non-trivial (abstract) approximation types over $(K,v_0)$. We will prove:
\begin{theorem}                       \label{MTat1}
Let $K(x)$ be the rational function field in one variable over $K$. Then for every 
non-trivial approximation type {\bf A} over $(K,v_0)$ there is an extension $v$ of $v_0$ 
to $K(x)$ such that $\appr_v (x,K)={\bf A}$. In other words, the function
\begin{equation}                            \label{V->A}
\cV\>\longrightarrow \cA\>, \qquad  v\,\mapsto\, \appr_v (x,K) 
\end{equation} 
is surjective.
\end{theorem}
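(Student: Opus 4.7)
My plan is to construct $v$ as a limit along the nest $\mathbf{A}$ of valuation-transcendental extensions of $v_0$, one per ball of $\mathbf{A}$, in the spirit of Remark~\ref{limitrem}. If $\mathbf{A}$ has a smallest ball $B_0$, one can just take $v$ to be the monomial extension $v_{B_0}$ described below, so the interesting case is when $\mathbf{A}$ has no smallest element.

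For every $B\in\mathbf{A}$ pick a center $a_B\in B$ and let $\gamma_B$ denote its radius. Set $\delta_B=\gamma_B$ if $B$ is closed, and $\delta_B=\gamma_B+\epsilon$ if $B$ is open, where $\epsilon$ is a formal positive element adjoined to the divisible hull of $v_0K$ that is smaller than every positive element of $v_0K$. Let $v_B$ be the (valuation-transcendental) monomial extension of $v_0$ to $K(x)$ with $v_B(x-a_B)=\delta_B$, defined by
\[
v_B\!\Bigl(\sum_{i\geq 0}c_i(x-a_B)^i\Bigr)\;=\;\min_{i}\bigl(v_0 c_i+i\,\delta_B\bigr).
\]
A short computation gives $v_B(x-a)=\delta_B$ for $a\in B$ and $v_B(x-a)=v_0(a_B-a)$ for $a\in K\setminus B$, so the balls of $\appr_{v_B}(x,K)$ of radius at least $\gamma_B$ coincide with the corresponding balls of $\mathbf{A}$.

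The crux is to show that for every non-zero $f\in K[x]$ the value $v_B(f)$ is eventually constant as $B$ shrinks cofinally through $\mathbf{A}$, and then to \emph{define} $v(f)$ as this eventual value (and $v(f/g):=v(f)-v(g)$). After fixing an extension of $v_0$ to $\tilde K$, factor $f=c\prod_{j=1}^n(x-\alpha_j)$ in $\tilde K[x]$ and extend $\mathbf{A}$ canonically to a nest $\tilde{\mathbf{A}}$ in $\tilde K$: for each linear factor, $v_B(x-\alpha_j)$ equals $\delta_B$ as long as $\alpha_j$ lies in the $\tilde K$-extension of $B$, and stabilizes at $v(a_B-\alpha_j)$ as soon as $\alpha_j$ has fallen out of $B$. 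Since $f$ has only finitely many roots, summing the linear-factor contributions yields the desired stabilization. Granting this, the ultrametric triangle inequality and multiplicativity of $v$ descend from those of the $v_B$, the extension property $v|_K=v_0$ is immediate, and $v(f)$ is finite for $f\neq 0$ by stabilization. To finish, one computes $v(x-a)=\lim_B v_B(x-a)$ for $a\in K$: it equals the ``distance'' of $a$ to $\mathbf{A}$ when $a$ eventually leaves the nest, and the cofinal supremum of the $\delta_B$ when $a$ lies in every ball of $\mathbf{A}$. Either way this matches the value dictated by $\mathbf{A}$, so intersecting the ultrametric balls around $x$ in $(K(x),v)$ with $K$ recovers $\mathbf{A}$.

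\textbf{Main obstacle.} The delicate step is the stabilization of $v_B(f)$. The pathological scenario to exclude is that every root of some non-zero $f\in K[x]$ lies in the intersection of $\tilde{\mathbf{A}}$, which would force $v_B(f)\to\infty$ and produce a non-zero element of infinite value in the limit. Ruling this out is precisely what the definition of a bona fide ``approximation type'' (from Section~\ref{sectat}) is engineered for, and it reflects the fact flagged in Remark~\ref{limitrem} that the limit of residue-transcendental extensions is a genuine valuation on a transcendental extension.
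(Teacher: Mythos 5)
Your limit-of-monomial-valuations strategy is appealing and close in spirit to the representation of extensions as limits of residue-transcendental ones mentioned in Remark~\ref{limitrem}, but the stabilization claim on which the whole construction rests is false in general, and your closing paragraph misdiagnoses the obstacle. The definition of a non-trivial approximation type does \emph{not} exclude the scenario in which a root of some nonzero $f\in K[x]$ lies in every ball of the extended nest $\tilde{\mathbf{A}}$. Two concrete failures: (i) if $\mathbf{A}$ is value-extending and $\supp\mathbf{A}$ has no maximal element, take $f=x-b$ with $b\in\bigcap\mathbf{A}\subseteq K$; then $v_B(f)=\delta_B$ runs cofinally through $\supp\mathbf{A}$ and never stabilizes. (ii) If $\mathbf{A}$ is \emph{algebraic} immediate with associated minimal polynomial $f$, then for a suitable extension of $v_0$ to $\tilde K$ some root $a$ of $f$ lies in every ball of $\tilde{\mathbf{A}}$ (this is exactly the content of Lemma~\ref{nfvfreal}), so $v_B(x-a)=\delta_B$ for all $B$ and $v_B(f)$ again increases cofinally. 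Algebraic immediate types genuinely exist (they are the reason Kaplansky needs his Theorem~3), so your ``eventual value'' is undefined precisely on the polynomials that carry the essential information. Note also that your own final sentence concedes that $v(x-a)$ must be ``the cofinal supremum of the $\delta_B$'' when $a$ lies in every ball; such a supremum is a cut in $v_0K$, not an element of it, so it cannot arise from a stabilizing sequence of values.

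What is missing is the step the paper supplies by its case distinction: whenever $v_B(f)$ fails to stabilize, the limit valuation must assign to the offending linear factor $x-\alpha$ (with $\alpha$ in $\bigcap\tilde{\mathbf{A}}$) a \emph{new} value, taken in a proper extension of the ordered group $v\tilde K$, realizing the cut determined by the unbounded family $(\delta_B)$ --- or lying above all of $v\tilde K$ when $\supp\mathbf{A}=v_0K$. Concretely, one first realizes $\mathbf{A}$ by an element $a$ (an element of $\bigcap\mathbf{A}$ in the value- and residue-extending cases, a root of the associated minimal polynomial via Theorem~\ref{KT3at} in the algebraic immediate case), then applies Corollary~\ref{pBcor} to $z=x-a$ with a suitable value $\alpha$, and finally restricts to $K(x)$, using Corollary~\ref{v>L-imm} to check that $x$ still induces $\mathbf{A}$ in the algebraic immediate case. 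Only the transcendental immediate case is genuinely covered by your stabilization argument. Until the non-stabilizing cases are handled, the proposal does not establish surjectivity.
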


\begin{theorem}                       \label{MTat2}
Let $K(x)$ be the rational function field in one variable over $K$.
Assume that $K$ is algebraically closed or that $(K,v_0)$ lies dense in its algebraic
closure. Then the valuations $v\in\cV$ are fully characterized by the approximation 
types $\appr_v (x,K)$. In particular, the function (\ref{V->A}) is a bijection.
\end{theorem}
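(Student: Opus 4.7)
The plan is to prove injectivity of the map (\ref{V->A}), since surjectivity is the content of Theorem~\ref{MTat1}. Assume $v_1,v_2\in\cV$ satisfy $\appr_{v_1}(x,K)=\appr_{v_2}(x,K)$; the task is to conclude $v_1=v_2$ on $K(x)$.

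Suppose first that $K$ is algebraically closed, so that $v_0K$ is divisible. Coincidence of the approximation types means that for every $a\in K$ and every $\gamma\in v_0K$, the values $v_1(x-a)$ and $v_2(x-a)$ lie on the same side of $\gamma$; equivalently, they determine the same cut in $v_0K$. In the immediate and residue-transcendental cases one has $v_jK(x)=v_0K$, so each $v_j(x-a)$ is pinned down by its cut, and the two valuations already agree on every linear factor $x-a$. In the value-transcendental case, the ultrametric inequality forces all values $v_j(x-a)$ lying outside $v_0K$ to coincide with a single element $\alpha_j$: if $v_j(x-a_1)$ and $v_j(x-a_2)$ were distinct and both outside $v_0K$, then $v_0(a_1-a_2)$ would equal their minimum, a contradiction. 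The approximation type then forces $\alpha_1$ and $\alpha_2$ to share the same cut in $v_0K$. Factoring $f\in K[x]$ as $c\prod_i(x-a_i)$ yields
\begin{equation*}
v_j(f)\;=\;\beta_f+n_f\alpha_j,
\end{equation*}
where $\beta_f\in v_0K$ and the non-negative integer $n_f$ are determined by the approximation type and so are independent of $j$. Comparing $v_j(f)$ with $v_j(g)$ for any $f,g\in K[x]$ thus reduces to $(n_f-n_g)\alpha_j\ge\beta_g-\beta_f$, whose truth depends only on the common cut of $\alpha_j$ in the divisible hull of $v_0K$. Hence $v_1$ and $v_2$ have the same valuation ring on $K(x)$.

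Now assume $(K,v_0)$ is dense in $(\tilde K,\tilde v_0)$. A standard argument shows that $K$ is then henselian, so $v_0$ extends uniquely to $\tilde v_0$ on $\tilde K$. Pick any extensions $\tilde v_1,\tilde v_2$ of $v_1,v_2$ to $\tilde K(x)$; both restrict to $\tilde v_0$ on $\tilde K$. The plan is to transfer equality of approximation types from $K$ to $\tilde K$ and then invoke the algebraically closed case on $\tilde K(x)$. For $\tilde a\in\tilde K$ and $\gamma\in v\tilde K$, density yields $a\in K$ with $\tilde v_0(\tilde a-a)>\gamma$, and the ultrametric inequality gives
\begin{equation*}
\tilde v_j(x-\tilde a)\ge\gamma\quad\Longleftrightarrow\quad v_j(x-a)\ge\gamma,
\end{equation*}
together with the analogous statement for strict inequality. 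Therefore $\appr_{\tilde v_1}(x,\tilde K)=\appr_{\tilde v_2}(x,\tilde K)$; the previous case yields $\tilde v_1=\tilde v_2$ on $\tilde K(x)$, and restriction gives $v_1=v_2$.

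The main obstacle is the value-transcendental sub-case of the algebraically closed case: the approximation type records only cuts in $v_0K$ and not the genuine values $v_j(x-a)$, so one must verify that no information needed for polynomial value computations is lost. The rigidity statement that at most one value can lie outside $v_0K$ is what unlocks this step; once it is in place, polynomial values assume the clean shape $\beta_f+n_f\alpha_j$ and all comparisons reduce to the common cut of $\alpha_j$.
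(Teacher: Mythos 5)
Your argument for the algebraically closed case is correct and takes a genuinely different route from the paper. Where the paper classifies every extension as pure (Proposition~\ref{acpure}), matches the trichotomy immediate/value-transcendental/residue-transcendental against the trichotomy of approximation types, and then invokes Kaplansky's theorem (via Corollary~\ref{corKT2}) for the immediate case and Lemma~\ref{exteq} resp.\ Proposition~\ref{prelBour} for the other two (Theorem~\ref{pureextat}), you simply factor every polynomial into linear factors over the algebraically closed field and observe that the approximation type determines each $v(x-a)$ exactly whenever that value lies in $v_0K$, while the at most one value outside $v_0K$ (your rigidity claim is part 2) of Lemma~\ref{maxapp}) is determined up to its cut in the divisible group $v_0K$, which by the mechanism of Lemma~\ref{cutord} suffices to fix the valuation ring. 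This is more elementary and self-contained; what it gives up is the finer dictionary of Theorem~\ref{pureextat} and the generalization to pure extensions of non-algebraically-closed base fields in Theorem~\ref{MTatenh2}.

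The dense case, however, contains a genuine error: density of $(K,v_0)$ in its algebraic closure does \emph{not} imply that $K$ is henselian, hence does not imply that $v_0$ extends uniquely to $\tilde K$. For example, $K=\C(t^q\mid q\in\Q)$ with the $t$-adic valuation has divisible value group $\Q$ and residue field $\C$ of characteristic $0$, so by the criterion quoted in the introduction its henselization is algebraically closed and, the rank being $1$, $K$ lies dense in $\tilde K$; yet $K$ is not henselian (the polynomial $X^2-(1+t)$ violates Hensel's lemma since $\sqrt{1+t}\notin K$). As written, your lifts $\tilde v_1,\tilde v_2$ may therefore restrict to two different extensions of $v_0$ to $\tilde K$, and then the element $a$ produced by density for $\tilde v_1$ need not satisfy $\tilde v_2(\tilde a-a)>\gamma$, so the transfer of equality of approximation types breaks down. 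The gap is repairable: set $w_0:=\tilde v_1|_{\tilde K}$; since all extensions of $v_0$ to $\tilde K$ are conjugate, there is $\sigma\in\Aut\tilde{K}|K$ with $\tilde v_2|_{\tilde K}=w_0\circ\sigma$, and extending $\sigma^{-1}$ to $\tilde{K}(x)$ by $\sigma^{-1}x=x$ and replacing $\tilde v_2$ by $\tilde v_2\circ\sigma^{-1}$ changes nothing on $K(x)$ but makes both lifts restrict to $w_0$ (with $K$ dense in $(\tilde K,w_0)$ by Lemma~\ref{densext}). One should also note that $w_0\tilde K=v_0K$ because density forces $v_0K$ to be divisible (Proposition~\ref{denspure}); this is needed so that every radius occurring in $\appr_{\tilde v_j}(x,\tilde K)$ already occurs in $\appr_{v_j}(x,K)$. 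With these repairs your reduction to the algebraically closed case goes through and is arguably a cleaner route than the paper's detour through almost pure extensions and algebraic completion types.
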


These theorems will be proved in Sections~\ref{sectmth1} and~\ref{sectmth2}, where we will 
also make precise what we mean by ``fully characterized''. The classification of extensions 
we introduced above is clearly reflected in the approximation types, together with 
related information necessary to fully describe the valuations.

Theorem~\ref{MTat2} is a special case of a more general theorem. Using the notion of
{\it pure extension} which we introduced in \cite{KTrans}, we show in 
Theorem~\ref{MTatenh2} that the statement of Theorem~\ref{MTat2} holds for all pure 
extensions, provided that $vK$ is divisible. The surjectivity of the function 
remains valid when we restrict to a natural subset of the set of all approximation types.
We obtain Theorem~\ref{MTat2} for the 
case of algebraically closed $K$ from Proposition~\ref{acpure} which states that if 
$K$ is algebraically closed, then every extension $v\in\cV$ is pure.

In order to cover the case where $(K,v_0)$ lies dense in its algebraic closure for 
some and hence for all extensions of $v_0$ to $\tilde{K}$ (see Lemma~\ref{densext}), 
we generalize the notion {\it pure extension} to what we call {\it almost pure
extension}. In Proposition~\ref{denspure} we will then show that in this case every
extension $v\in\cV$ is almost pure. As we will show that Theorem~\ref{MTatenh2} also
holds for almost pure extensions, Theorem~\ref{MTat2} will follow from it. 

\pars
Two important cases where $(K,v_0)$ lies dense in its algebraic closure are:
\sn
1) The rank of $(K,v_0)$ is $1$, i.e., $v_0 K$ is archimedean ordered (hence order
embeddable in $\R$), and its henselization is algebraically closed. The latter occurs 
when $v_0 K$ is divisible and $Kv_0$ is algebraically closed of characteristic $0$
(see \cite[32.21]{W2}).
\sn
2) The field $K$ is separable-algebraically closed and $v_0$ is non-trivial (see
\cite[Theorem~1.11]{Ku6}).

\parm
An alternative approach to the proof of Theorem~\ref{MTat1} can be given by use of model
theory. Under the minimal necessary additional assumptions we show in Theorem~\ref{mtreal}
that every non-trivial approximation type {\bf A} over $(K,v_0)$ can be realized in some 
elementary extension $(K^*,v)$ of $(K,v_0)$. For the proof we show that under the 
additional assumptions, the approximation types are subsets of suitable $1$-types over
$(K,v_0)$. This in fact explains the choice of the name ``approximation type''.

\pars
Note that for simplicity we will often write ``$v$'' in place of ``$v_0$'' for the 
valuation on $K$ when it is not necessary to distinguish it from its extensions.

\bn
%
%
%
\section{Preliminaries}                        \label{sectprel}
By a valued field $(K,v)$ we mean a field $K$ endowed with a Krull valuation $v$. This is
a function from $K$ to $\Gamma\cup\{\infty\}$ where $\Gamma$ is an ordered abelian group 
and $\infty$ an element larger than all elements of $\Gamma$, which satisfies $v(a)=
\infty\Leftrightarrow a=0$, $v(ab)=v(a)+v(b)$ and $v(a+b)\geq \min\{v(a),v(b)\}$ for 
all $a,b\in K$. From these laws one deduces that $v(1)=0$, $v(-a)=v(a)$, $v(a^{-1})=-v(a)$ 
and $v(a_1+\ldots
+a_n)\geq\min_{1\leq i\leq n} v(a_i)$ with equality holding if all values $v(a_i)$ are
distinct. We will make use of all these facts freely.

The value group $v(K^\times)\subseteq\Gamma$ of $(K,v)$ will be denoted by $vK$, and its
residue field 
$\{a\in K\mid v(a)\geq 0\}/\{a\in K\mid v(a)> 0\}$ by $Kv$. The value of an element $a$ 
will be denoted by $va$ in place of $v(a)$, and its residue by $av$. By $(L|K,v)$ we
denote a field extension $L|K$ where $v$ is a valuation on $L$ and $K$ is endowed with the
restriction of $v$. For background on valuation theory, see \cite{[En],[EP],[ZS]}.

\begin{lemma}                                 \label{approx}
Take any extension $(K(x)|K,v)$. For each $c\in K$, the following assertions are equivalent:
\sn
a) there is some $c'\in K$ such that $v(x-c')>v(x-c)$;
\sn
b) there is $d\in K$ such that $vd(x-c)=0$ and $d(x-c)v\in Kv$.
\sn
If $d(x-c)v\in Kv$ holds for some $d\in K$ such that $vd(x-c)=0$, then it holds for 
all such $d$.
\end{lemma}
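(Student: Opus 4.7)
I would first dispose of the final independence statement, then handle the two implications. If $d, d' \in K$ both satisfy $vd(x-c) = vd'(x-c) = 0$, then $vd = vd' = -v(x-c)$, so $v(d/d') = 0$ and $(d/d')v$ is a unit in $Kv$. From $d(x-c) = (d/d') \cdot d'(x-c)$, passing to residues gives $d(x-c)v = (d/d')v \cdot d'(x-c)v$, and the unit factor shows that $d(x-c)v \in Kv$ if and only if $d'(x-c)v \in Kv$.

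For the direction (a) $\Rightarrow$ (b), given $c' \in K$ with $v(x-c') > v(x-c)$, I would use the identity $x - c = (x - c') + (c' - c)$. Since the two summands have distinct values, the ultrametric law forces $v(c' - c) = v(x-c)$. Setting $d := (c' - c)^{-1}$ gives $vd(x-c) = 0$, and $d(x-c) = 1 + (x-c')/(c'-c)$. The second summand has value $v(x-c') - v(x-c) > 0$, hence residue $0$, so $d(x-c)v = 1 \in Kv$.

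For (b) $\Rightarrow$ (a), given $d \in K$ with $vd(x-c) = 0$ and $d(x-c)v \in Kv$, I would choose a lift $a \in K$ with $va = 0$ and $av = d(x-c)v$. Then $v(d(x-c) - a) > 0$, and dividing by $d$ (using $vd = -v(x-c)$) gives $v(x - c - a/d) > v(x-c)$. Hence $c' := c + a/d$ is the desired element.

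The proof is essentially bookkeeping in the standard rules of Krull valuations, and no serious obstacle arises. The only care needed is tracking how the value of $d$ cancels that of $x-c$; this is automatic because of the normalization $vd(x-c) = 0$, and on the residue side one repeatedly uses that the residue of a product of value-zero elements is the product of the residues.
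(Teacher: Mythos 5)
Your proof is correct and follows essentially the same route as the paper: the same lift-and-divide argument for b)\,$\Rightarrow$\,a), the same unit-ratio argument for the independence of the choice of $d$, and for a)\,$\Rightarrow$\,b) only the cosmetic difference that you fix the specific normalizer $d=(c'-c)^{-1}$ (getting residue $1$) where the paper works with an arbitrary $d$ of the right value.
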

\begin{proof}
Assume first that there is some $c'\in K$ such that $v(x-c')>v(x-c)$. Then 
$v(x-c)=\min\{v(x-c'),v(x-c)\}=v(c-c')\in vK$, so that there is $d\in K$ such that 
$vd(x-c)=0$. It follows 
that $v(d(x-c)-d(c'-c))=vd+v(x-c')>vd(x-c)$, whence $d(x-c)v=d(c'-c)v\in Kv$.

\pars
Now assume that there is $d\in K$ such that $vd(x-c)=0$ and $d(x-c)v\in Kv$. Pick 
$b\in K$ such that $vb=0$ and $bv=d(x-c)v$. Then $v(d(x-c)-b)>0$, whence 
$v(x-c-bd^{-1})>-vd=v(x-c)$, so that $c':=c+bd^{-1}$ satisfies assertion a).

\pars
Assume that there is $d\in K$ such that $vd(x-c)=0$ and $d(x-c)v\in Kv$ and take 
some $d'\in K$ such that $vd'(x-c)=0$. Then $d'(x-c)v=(d'd^{-1}v)(d(x-c)v)\in Kv$
since $vd'd^{-1}=0$.
\end{proof}

\mn
%
%
\subsection{Ordered sets and cuts}                           
\mbox{ }\sn
Take any totally ordered set $(T,<)$, an element $\alpha\in T$ and two subsets $D,E\subseteq
T$. We write 
\n
$\alpha>D$ if $\alpha>\delta$ for all $\delta\in D$, 
\n
$\alpha<E$ if $\alpha<\varepsilon$ for all $\varepsilon\in E$, 
\n
$D<E$ if $\delta<\varepsilon$ for all $\delta\in D$ and $\varepsilon\in E$,
\n
and similarly for ``$\leq$'' in place of ``$<$''. A \bfind{cut} {\bf in} $T$ is a pair 
$(D,E)$ of subsets of $T$ such that $D<E$ and $D\cup E=T$. If this is the case, then $D$ 
is an \bfind{initial segment} of $T$, that is, if $\delta \in D$ and $\alpha\leq\delta$, 
then $\alpha\in D$, and $E$ is a \bfind{final segment} of $T$, that is, if $\varepsilon 
\in E$ and $\alpha\geq\varepsilon$, then $\alpha\in E$.

If $\alpha\notin T$ is an element in some totally ordered set containing $(T,<)$, then we 
say that \bfind{$\alpha$ realizes the cut $(D,E)$} if $D<\alpha<E$. The \bfind{cut induced
by $\alpha$ in $T$} is the pair $(\{\delta\in T\mid\delta<\alpha\}\,,\,\{\varepsilon\in T
\mid\varepsilon>\alpha\})$.

\mn
%
%
\subsection{Immediate extensions}                           \label{sectie}
\mbox{ }\sn
An extension $(L|K,v)$ is called \bfind{immediate} if 
the canonical embeddings $vK\hookrightarrow vL$ and
$Kv\hookrightarrow Lv$ are both onto. Instead, we will
also say ``if $(K,v)$ and $(L,v)$ have the same value group and the same
residue field'' or just ``if $vL=vK$ and $Lv=Kv$'' (recall that we are
identifying equivalent valuations and places, so we may view $vK$ as a
subgroup of $vL$ and $Kv$ as a subfield of $Lv$). However, the reader should
note that this is less precise and can be misunderstood. For instance,
if $vK\isom\Z$ and $L|K$ is finite, then still, $vK\isom\Z$ even if the
embedding of $vK$ in $vL$ is not onto. Important examples of immediate extensions
are henselizations and completions of valued fields. 

\begin{lemma}                               \label{ievagus}
An extension $(L|K,v)$ is immediate if and only if for every $x\in L\setminus K$ 
there is $a\in K$ such that $v(x-a)>vx$.
\end{lemma}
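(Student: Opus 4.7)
The plan is to prove each direction directly from the definition of immediate extension, observing that a single approximation inequality $v(x-a) > vx$ does double duty: it forces $va = vx$, placing $vx$ in $vK$, and, once suitably normalised, it identifies the residue of $x$ with $av\in Kv$.

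For the forward implication, suppose $(L|K,v)$ is immediate and fix $x\in L\setminus K$. Since $vx\in vL = vK$, I would choose $c\in K$ with $vc = vx$, so that $v(xc^{-1}) = 0$. Because $Lv = Kv$, the residue $(xc^{-1})v$ lies in $Kv$, so I pick $b\in K$ with $bv = (xc^{-1})v$; then $v(xc^{-1} - b) > 0$, equivalently $v(x - bc) > vx$, and $a := bc$ works. Alternatively, one can invoke Lemma~\ref{approx} with centre $0$, which packages exactly this computation.

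For the converse, assume the condition. To see that $vL = vK$, take $\gamma \in vL$ and $y \in L$ with $vy = \gamma$. If $y \in K$, there is nothing to show; otherwise the hypothesis yields $a\in K$ with $v(y - a) > vy$. Writing $y = a + (y-a)$ and invoking the ultrametric equality for summands of unequal value forces $va = vy$, so $\gamma \in vK$. For the residue field, pick a nonzero $\xi\in Lv$ and represent it by $y\in L$ with $vy = 0$ and $yv = \xi$. We may assume $y\in L\setminus K$, since otherwise $\xi\in Kv$ trivially. The hypothesis supplies $a\in K$ with $v(y - a) > 0$, whence $yv = av$ and $\xi\in Kv$.

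The main obstacle, such as it is, is purely conceptual rather than technical: one must notice that the single inequality $v(x - a) > vx$ carries both pieces of information needed for immediateness, so that a priori two conditions (surjectivity on values and on residues) collapse into one. Once that observation is in hand, everything else is a short, routine verification using the ultrametric triangle equality.
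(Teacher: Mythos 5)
Your proposal is correct and follows essentially the same route as the paper: the forward direction is the computation packaged in Lemma~\ref{approx} applied with centre $0$ (as you note yourself), and the converse is the same two-line ultrametric argument for values and residues separately. The only difference is that you explicitly dispose of the trivial case $y\in K$ in the converse, which the paper leaves implicit.
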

\begin{proof}
%
``$\Rightarrow$'': Assume that $(L|K,v)$ is immediate and take $x\in L\setminus K$. Then
$x\ne 0$ and therefore, $vx\in vL=vK$. Thus there is $d\in K$ such that $vdx=0$ and 
$dxv\in Lv=Kv$. Now we apply Lemma~\ref{approx} with $c=0$
to obtain $c'\in K$ such that $v(x-c')>vx$. Thus $a:=c'$ is the required element.
\sn
``$\Leftarrow$'': Take $\alpha\in vL$ and $x\in L$ such
that $vx=\alpha$. If there is $a\in K$ such that $v(x-a)>vx$, then
$\alpha=vx=va\in vK$. Now let $\zeta\in Lv$ and $x\in L$ be such that
$xv=\zeta$. If there is $a\in K$ such that $v(x-a)>vx=0$, then
$\zeta=xv=av\in Kv$.
\end{proof}

The assertion of this lemma is an important observation, as it allows us to generalize 
the definition of ``immediate extension'' to more general valued structures for which 
the invariants associated to them can be much more complicated than the pair of value
group and residue field. One of such cases occurs when we consider a valued field 
extension $(L|K,v)$ and a $K$-vector space $V\subseteq L$. When equipped with the 
restriction of the valuation $v$ of $L$, we call $(V,v)$ a \bfind{valued 
$(K,v)$-vector space}. If $V\subseteq V'\subseteq L$, then we call $(V'|V,v)$ an
\bfind{immediate extension of valued vector spaces} if for every $a\in V'$ there is 
$b\in V$ such that $v(a-b)>va$.

\mn
%
%
\subsection{Algebraic valuation independence}                           
\mbox{ }\sn
The following proposition has turned out to be amazingly universal in many different
applications of valuation theory. It plays an important role for example in
algebraic geometry as well as in the model theory of valued fields, in
real algebraic geometry, or in the structure theory of exponential Hardy
fields (= nonarchimedean ordered fields which encode the asymptotic
behaviour of real-valued functions including $\exp$ and $\log$).
For more details and the easy but technical proof of the proposition, see
\cite[Chapter VI, \S10.3, Theorem~1]{Bou}.
\begin{proposition}                                      \label{prelBour}
Let $(L|K,v)$ be an extension of valued fields. Take elements $x_i,y_j
\in L$, $i\in I$, $j\in J$, such that the values $vx_i\,$, $i\in I$,
are rationally independent over $vK$, and the residues $y_jv$, $j\in
J$, are algebraically independent over $Kv$. Then the elements
$x_i,y_j$, $i\in I$, $j\in J$, are algebraically independent over $K$.

Moreover, if we write
\begin{equation}                            \label{polBour}
f\>=\> \displaystyle\sum_{k}^{} c_{k}\,
\prod_{i\in I}^{} x_i^{\mu_{k,i}} \prod_{j\in J}^{} y_j^{\nu_{k,j}}\in
K[x_i,y_j\mid i\in I,j\in J]
\end{equation}
in such a way that whenever $k\ne\ell$, then
there is some $i$ s.t.\ $\mu_{k,i}\ne\mu_{\ell,i}$ or some $j$ s.t.\
$\nu_{k,j}\ne\nu_{\ell,j}\,$, then
\begin{equation}                            \label{value}
vf\>=\>\min_k\, v\,c_k \prod_{i\in I}^{}
x_i^{\mu_{k,i}}\prod_{j\in J}^{} y_j^{\nu_{k,j}}\>=\>
\min_k\, vc_k\,+\,\sum_{i\in I}^{} \mu_{k,i} v x_i\>.
\end{equation}
That is, the value of the polynomial $f$ is equal to the least of the
values of its monomials. In particular, this implies:
\begin{eqnarray*}
vK(x_i,y_j\mid i\in I,j\in J) & = & vK\oplus\bigoplus_{i\in I}
\Z vx_i\\
K(x_i,y_j\mid i\in I,j\in J)v & = & Kv\,(y_jv\mid j\in J)\>.
\end{eqnarray*}
Moreover, the valuation $v$ and the residue map on $K(x_i,y_j\mid i\in I,j\in J)$ are
uniquely determined by their restriction to $K$, the values $vx_i$ and
the residues $y_jv$.
\parm
Conversely, if $(K,v)$ is any valued field, the elements $x_i,y_j$, $i\in I$, $j\in J$, 
are algebraically independent over $K$, and we assign to the $vx_i$
any values in an ordered group extension of $vK$ which are rationally
independent, then (\ref{value}) defines a valuation on 
$K(x_i,y_j\mid i\in I,j\in J)$, and the
residues $y_jv$, $j\in J$, are algebraically independent over $Kv$. \qed
\end{proposition}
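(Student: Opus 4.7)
The plan is to reduce all assertions to the value formula (\ref{value}), from which everything else follows by routine arguments. Since any polynomial involves only finitely many variables, I first reduce to the case of finite $I,J$. For a polynomial $f$ as in (\ref{polBour}), I would regroup terms by their $x$-multi-index and write
\[
f \,=\, \sum_{\mu} P_\mu(\bar y) \prod_{i\in I} x_i^{\mu_i}, \qquad P_\mu(\bar y) \,:=\, \sum_\nu c_{\mu,\nu} \prod_{j\in J} y_j^{\nu_j},
\]
the outer sum running over the finitely many $\mu$ with $P_\mu \ne 0$.

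The argument for (\ref{value}) proceeds in two steps. First, for each such $\mu$, I would show $vP_\mu(\bar y) = \min_\nu vc_{\mu,\nu}$: setting $\gamma := \min_\nu vc_{\mu,\nu}$ and picking $d\in K$ with $vd = \gamma$, the rescaled polynomial $d^{-1}P_\mu(\bar y)$ has all coefficients of non-negative value and at least one coefficient of value $0$; its residue is therefore a nonzero polynomial in the $y_jv$ over $Kv$ by the algebraic independence hypothesis, whence $v(d^{-1}P_\mu) = 0$ and $vP_\mu = \gamma$. Second, for distinct $\mu \ne \mu'$ (both with nonzero $P$), the quantities $vP_\mu + \sum_i \mu_i vx_i$ must differ, since equality would yield $\sum_i (\mu_i - \mu'_i) vx_i \in vK$, contradicting the rational independence of the $vx_i$ over $vK$. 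Hence $f$ is a sum of elements with pairwise distinct values, and the strong triangle inequality (with equality for distinct values, recorded in Section~\ref{sectprel}) gives $vf = \min_\mu(vP_\mu + \sum_i \mu_i vx_i) = \min_{\mu,\nu}(vc_{\mu,\nu} + \sum_i \mu_i vx_i)$, which is (\ref{value}).

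The remaining consequences are then short. A nonzero polynomial has finite value by (\ref{value}), hence is nonzero in the field, which gives algebraic independence of the $x_i, y_j$ over $K$. Combining (\ref{value}) with $v(f/g) = vf - vg$ identifies $vK(x_i, y_j)$ with $vK + \sum_{i\in I}\Z vx_i$, directness following from the rational independence of the $vx_i$. The residue field formula comes from viewing the construction in two stages: the Gauss-type valuation on $K(y_j\mid j\in J)$ yields $Kv(y_jv\mid j\in J)$ as residue field, and the further value-transcendental extension by the $x_i$ (whose values remain rationally independent over the unchanged value group) adds nothing to the residue field; concretely, for $h\in K(x_i,y_j)$ of value $0$, writing $h=f/g$ and isolating the unique minimizing multi-index $\mu^*$ in numerator and denominator exhibits $hv$ as a ratio of polynomials in the $y_jv$ over $Kv$. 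Uniqueness of $v$ and of the residue map is immediate, since (\ref{value}) determines all polynomial values. For the converse, I would \emph{define} $v$ on polynomials by (\ref{value}) and verify the valuation axioms in the same two-stage fashion: the restriction to $K[y_j]$ is the classical Gauss valuation (for which the axioms are standard and the $y_jv$ are automatically algebraically independent over $Kv$), and the extension to $K[y_j][x_i]$ is then handled by the same distinctness-of-values argument used in the forward direction.

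The main obstacle is the first step of the value computation: showing $vP_\mu = \min_\nu vc_{\mu,\nu}$ is precisely where the algebraic independence of the residues $y_jv$ enters, and every other step amounts to bookkeeping resting on this fact together with the rational independence of the $vx_i$.
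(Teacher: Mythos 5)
Your argument is correct and complete in all essential points: the two-step computation (Gauss-type evaluation of the $y$-blocks via algebraic independence of the residues, then pairwise distinctness of the values $vP_\mu+\sum_i\mu_i vx_i$ via rational independence of the $vx_i$) is exactly the standard proof of this result. The paper itself gives no proof, deferring to Bourbaki (Chapter VI, \S10.3, Theorem~1) and calling the argument ``easy but technical''; your reconstruction, including the treatment of the residue field, the uniqueness clause, and the converse, matches that reference's approach.
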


The proof of the following corollary is straightforward:
\begin{corollary}                          \label{pBcor}
Take a valued field $(K,v_0)$ and an element $z$ transcendental over $K$. Take an element 
$\alpha$ either in $v_0K$, or in some ordered abelian group containing $v_0K$. In the latter 
case, assume that $\alpha$ is not a torsion element modulo $v_0K$. For every polynomial
$f(X)=\sum_{i=0}^n c_iX^i\in K[X]$, define 
\[
vf(z) \>:=\> \min_i v_0 c_i + i\alpha\>,
\]
and extend $v$ to $K(z)$ in the canonical way. Then $v$ is a valuation on $K(z)$. 

If $\alpha\in v_0 K$ and $d\in K$ is such that $v_0 d=-vz$, then $dzv$ is 
transcendental over $Kv_0\,$, $K(z)v=Kv_0 (dzv)$ and $vK(z)=v_0 K$. If 
$\alpha\notin v_0 K$, then $vK(z)=v_0K\oplus\Z\alpha$ and $K(z)v=Kv_0\,$.
\end{corollary}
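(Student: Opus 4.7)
The plan is to reduce both statements to direct applications of Proposition~\ref{prelBour}, which does all the real work of verifying the valuation axioms.

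First I would dispatch the case $\alpha\notin v_0K$. The hypothesis that $\alpha$ is not a torsion element modulo $v_0K$ says exactly that $n\alpha\notin v_0K$ for every $n\geq 1$, which is precisely rational independence of the single element $\alpha$ over $v_0K$. I then apply the converse direction of Proposition~\ref{prelBour} with $I=\{1\}$, $J=\emptyset$, $x_1=z$ (transcendental over $K$), and $vx_1:=\alpha$; formula (\ref{value}) then reduces verbatim to $vf(z)=\min_i v_0 c_i + i\alpha$. The proposition therefore gives that $v$ is a valuation on $K(z)$ and yields $vK(z)=v_0K\oplus\Z\alpha$ and $K(z)v=Kv_0$.

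Next I would handle the case $\alpha\in v_0K$ by translating the situation to a Gauss valuation. Choose $d\in K$ with $v_0 d=-\alpha$ and set $y:=dz$. Since $z$ is transcendental over $K$, so is $y$, and $K(z)=K(y)$. For $f(X)=\sum_{i=0}^n c_iX^i\in K[X]$, rewriting via $z=d^{-1}y$ gives
\[
f(z)\>=\>\sum_{i=0}^n c_i d^{-i}\,y^i\qquad\text{and}\qquad vf(z)\>=\>\min_i v_0c_i+i\alpha\>=\>\min_i v_0(c_id^{-i}),
\]
which is exactly the Gauss valuation on $K[y]$. Applying the converse of Proposition~\ref{prelBour} with $I=\emptyset$, $J=\{1\}$, and $y_1=y$, we conclude that $v$ extends to a valuation on $K(y)=K(z)$, that $yv=dzv$ is transcendental over $Kv_0$, and that $K(z)v=Kv_0(dzv)$ while $vK(z)=v_0K$.

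There is no serious obstacle here. Once one notes that the hypothesis on $\alpha$ in the first case is precisely rational independence, and that the substitution $y=dz$ in the second case normalises the computation to a Gauss valuation, both claims fall out of the converse part of Proposition~\ref{prelBour}. The canonical extension of the polynomial-ring valuation to $K(z)$ via $v(f/g):=vf-vg$ is well defined as soon as multiplicativity on $K[z]$ is in hand, which the proposition supplies for free.
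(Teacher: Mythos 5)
Your proposal is correct and is exactly the ``straightforward'' argument the paper intends (the paper gives no explicit proof, citing Proposition~\ref{prelBour}): the non-torsion case is the converse of the proposition with $I=\{1\}$, $J=\emptyset$, and the case $\alpha\in v_0K$ reduces via the substitution $y=dz$ to the Gauss valuation, i.e.\ the converse with $I=\emptyset$, $J=\{1\}$. Both the verification of the valuation axioms and the computation of $vK(z)$ and $K(z)v$ then come directly from formula (\ref{value}) and its consequences, as you say.
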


We need to know when the construction described in this corollary still yields the same 
valuation even when $z$ and $\alpha$ are changed. Although we have already announced that 
we identify equivalent valuations, at this point we will be a bit more precise. If $v_1$ 
and $v_2$ are extensions of a valuation $v_0$ from $K$ to some extension field $L$, then 
we will say that $v_1$ and $v_2$ are \bfind{equivalent over $v_0$} if there is an order
preserving isomorphism $\rho$ from $v_1 L$ to $v_2 L$ which is the identity on $v_0 K$ 
such that $v_2=\rho\circ v_1\,$.
\begin{lemma}                           \label{cutord}
Assume that $\Gamma$ is an ordered abelian group and $\alpha\notin\Gamma$ is an
element in some ordered abelian group containing $\Gamma$. Then the ordering on $\Gamma
\oplus\Z\alpha$ is uniquely determined by the cut that $\alpha$ induces in $\Gamma$, 
provided that \n
a) $\Gamma$ is divisible, or 
\n
b) $\alpha>\Gamma$ (in which case the induced cut is $(\Gamma,\emptyset)$), or
\n
c) $\alpha<\Gamma$ (in which case the induced cut is $(\emptyset,\Gamma)$).
\end{lemma}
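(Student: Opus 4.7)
The plan is to show that for every $\gamma\in\Gamma$ and $n\in\Z$, the sign of $\gamma+n\alpha$ in $\Gamma\oplus\Z\alpha$ is forced by the cut $(D,E)$ that $\alpha$ induces in $\Gamma$. Once this is established, the order on $\Gamma\oplus\Z\alpha$ is determined, since specifying the positive cone is equivalent to specifying the ordering. The case $n=0$ is trivial because the ordering is required to extend that of $\Gamma$, so the interesting work is in the case $n\ne 0$, and it splits along the three hypotheses.

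First I would handle case (a). Since $\Gamma$ is divisible, for $n\ne 0$ I can write
\[
\gamma+n\alpha \;=\; n\bigl(\tfrac{\gamma}{n}+\alpha\bigr),
\]
so the sign of $\gamma+n\alpha$ equals the sign of $n$ times the sign of $\alpha-(-\gamma/n)$. The element $-\gamma/n$ lies in $\Gamma$ and is distinct from $\alpha$ (since $\alpha\notin\Gamma$), and whether $-\gamma/n<\alpha$ or $-\gamma/n>\alpha$ is precisely the question of whether $-\gamma/n\in D$ or $-\gamma/n\in E$. Hence the sign is forced by the cut.

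Next I would dispatch cases (b) and (c) together by a symmetric argument. Assume (b), so $\alpha>\Gamma$; in particular $\alpha>0$. For $n>0$ and any $\gamma\in\Gamma$ we have $n\alpha\geq\alpha>-\gamma$, hence $\gamma+n\alpha>0$; for $n<0$ we get $\gamma+n\alpha<0$ analogously. Thus the sign depends only on the sign of $n$ (and, when $n=0$, on $\gamma$), which is again forced without any further information beyond the cut $(\Gamma,\emptyset)$ that $\alpha$ induces. Case (c) is identical after replacing $\alpha$ by $-\alpha$.

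I do not expect a serious obstacle here: the proof is essentially a bookkeeping exercise verifying that the ordering axioms in an extension are pinned down by knowing how $\alpha$ sits with respect to $\Gamma$. The only subtle point is that in case (a) one must use divisibility to replace $\gamma+n\alpha$ by a pure comparison of $\alpha$ with an element of $\Gamma$; without divisibility the quotient $-\gamma/n$ need not exist in $\Gamma$, and the cut alone would not suffice to determine where $\gamma+n\alpha$ sits. Hypotheses (b) and (c) circumvent this by making the sign depend only on $n$, so divisibility is not needed there.
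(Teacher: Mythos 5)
Your proof is correct and follows essentially the same route as the paper: in case (a) you reduce the sign of $\gamma+n\alpha$ to the position of $-\gamma/n$ in the cut $(D,E)$ using divisibility, and in cases (b) and (c) you observe that the sign is determined by the sign of $n$ alone. The remark that divisibility is genuinely needed in case (a) because $-\gamma/n$ need not exist in $\Gamma$ otherwise is also the correct reading of why the hypotheses are stated as they are.
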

\begin{proof}
We have to show that it can be deduced from the cut $(D,E)$ induced by $\alpha$ in $\Gamma$
whether any given element $\gamma+n\alpha\in \Gamma\oplus\Z\alpha$ with $\gamma\in\Gamma$ 
and $n\in\Z\setminus\{0\}$ is positive or not. 

Assume first that $\Gamma$ is divisible, so 
that $\frac{\gamma}{n}\in\Gamma$. If $n$ is positive, then $\gamma+n\alpha>0\Leftrightarrow 
\alpha > -\frac{\gamma}{n}\Leftrightarrow -\frac{\gamma}{n}\in D$. If $n$ is negative, then 
$\gamma+n\alpha>0\Leftrightarrow \alpha <-\frac{\gamma}{n}\Leftrightarrow -\frac{\gamma}{n}
\in E$. 

If $\alpha>\Gamma$, then $\gamma+n\alpha>0\Leftrightarrow n>0$. If $\alpha<\Gamma$, then 
$\gamma+n\alpha>0\Leftrightarrow n<0$.
\end{proof}

\begin{lemma}                           \label{exteq}
Take a valued field $(K,v_0)$ and an element $z$ transcendental over $K$. Pick any 
$\alpha_1$ and $\alpha_2$ in some ordered abelian groups containing $v_0K$.
For $i=1,2$, assume that $\alpha_i$ is not a torsion element modulo $v_0 K$ if $\alpha_i
\notin v_0 K$ and extend $v_0$ to $K(z)$ by using Corollary~\ref{pBcor}, assigning the 
value $\alpha_i$ to $z$. Then the following assertions hold:
\sn
1) The valuations $v_1$ and $v_2$ can only be equivalent if either both $\alpha_1\notin 
v_0K$ and $\alpha_2\notin v_0K$ or both $\alpha_1\in v_0K$ and $\alpha_2\in v_0K$.
\sn
2) Assume that $\alpha_1\notin v_0K$ and $\alpha_2\notin v_0K$. If $v_1$ and $v_2$ are 
equivalent over $v_0$, then $\alpha_1$ and $\alpha_2$ realize the same cut $(D,E)$ 
in $v_0K$. The converse holds if $v_0 K$ is divisible, or $D=\emptyset$, or $E=\emptyset$. 
\end{lemma}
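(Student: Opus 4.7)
The plan is to deduce both parts from the explicit description of $v_i$ provided by Corollary~\ref{pBcor}, and to appeal to Lemma~\ref{cutord} only in the converse direction of part 2).

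For part 1), I would compare the value groups. By Corollary~\ref{pBcor}, $v_iK(z)=v_0K$ when $\alpha_i\in v_0K$, whereas $v_iK(z)=v_0K\oplus\Z\alpha_i\supsetneq v_0K$ when $\alpha_i\notin v_0K$. An equivalence of $v_1$ and $v_2$ over $v_0$ is an order-preserving group isomorphism $\rho\colon v_1K(z)\to v_2K(z)$ that fixes $v_0K$ pointwise; such a $\rho$ cannot exist if one of the two groups coincides with $v_0K$ while the other properly contains it.

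For the forward direction of part 2), suppose $v_1$ and $v_2$ are equivalent over $v_0$ via $\rho$. Then $\rho(\alpha_1)=\rho(v_1z)=v_2z=\alpha_2$, and for any $\gamma\in v_0K$ we have $\gamma<\alpha_1$ iff $\gamma=\rho(\gamma)<\rho(\alpha_1)=\alpha_2$, and similarly with ``$>$'' in place of ``$<$''. Hence $\alpha_1$ and $\alpha_2$ induce the same cut in $v_0K$.

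For the converse, assume that $\alpha_1$ and $\alpha_2$ realize the same cut $(D,E)$ in $v_0K$, and that in addition $v_0K$ is divisible, or $D=\emptyset$, or $E=\emptyset$. Define a group homomorphism $\rho\colon v_0K\oplus\Z\alpha_1\to v_0K\oplus\Z\alpha_2$ by $\rho(\gamma+n\alpha_1):=\gamma+n\alpha_2$; it is an isomorphism fixing $v_0K$ pointwise. By Lemma~\ref{cutord}, under any of the three listed hypotheses the ordering on each of the two groups is uniquely determined by the cut $(D,E)$, hence $\rho$ is order-preserving. It remains to verify that $v_2=\rho\circ v_1$. Since $\rho$ is additive, it suffices to check this on polynomials $f(z)=\sum_i c_iz^i$, where Corollary~\ref{pBcor} gives $v_if(z)=\min_i(v_0c_i+i\alpha_i)$; as the order-preserving $\rho$ commutes with taking minima and sends $v_0c_i+i\alpha_1$ to $v_0c_i+i\alpha_2$, we obtain $\rho(v_1f(z))=v_2f(z)$. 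The main obstacle is conceptual rather than computational: without one of the extra hypotheses, Lemma~\ref{cutord} fails and the ordering on $v_0K\oplus\Z\alpha_i$ is not recoverable from the cut alone, so the cut is genuinely insufficient to pin down the valuation in general.
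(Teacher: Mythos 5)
Your proposal is correct and follows essentially the same route as the paper: part 1) by comparing value groups via Corollary~\ref{pBcor}/Proposition~\ref{prelBour}, the forward direction of part 2) by observing $\rho\alpha_1=\alpha_2$ and that $\rho$ is order preserving and fixes $v_0K$, and the converse by invoking Lemma~\ref{cutord} to see that the cut determines the ordering on $v_0K\oplus\Z\alpha_i$ and then that the value assigned to $z$ determines the extension. Your explicit verification that $\rho(v_1f(z))=v_2f(z)$ on polynomials is just an unwinding of the paper's appeal to the uniqueness statement in Corollary~\ref{pBcor}.
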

\begin{proof}
1): If one of $\alpha_1,\alpha_2$ lies in $v_0K$ and the other does not, then by 
Proposition~\ref{prelBour}, one of $v_1K(z)$, $v_2K(z)$ is equal to $v_0K$ and the other is 
not, so $v_1$ and $v_2$ cannot be equivalent.

\sn
2): Assume that $\alpha_1\notin v_0K$ and $\alpha_2\notin v_0K$. Further, assume first 
that $v_1$ and $v_2$ are equivalent over $v_0$, that is, there is an order preserving
isomorphism $\rho$ from $v_1K(z)$ to $v_2K(z)$ fixing $v_0K$ such that $v_2=\rho\circ 
v_1$. Thus $\alpha_2=v_2 z=\rho\alpha_1\in v_2K(z)$.  Since $\rho$ is order preserving
and fixes $v_0 K$, 
this shows that $\alpha_1$ and $\alpha_2$ realize the same cut in $v_0 K$.

For the converse, assume that $\alpha_1$ and $\alpha_2$ realize the same cut $(D,E)$ in 
$v_0 K$. First, assume that $v_0 K$ is divisible. This implies that $\alpha_1\notin v_0K$ 
and $\alpha_2\notin v_0K$ are not torsion elements modulo 
$v_0 K$. Thus sending $\alpha_1$ to $\alpha_2$ induces an isomorphism $\rho$ from 
$v_0 K\oplus\Z\alpha_1$ to $v_0 K\oplus\Z\alpha_2$ which leaves $v_0 K$ fixed. By 
Lemma~\ref{cutord}, there is a unique ordering on $v_0 K\oplus\Z\alpha_1$ determined by the 
cut $(D,E)$. Through $\rho$ it induces on $v_0 K\oplus\Z\alpha_2$ an ordering, which again 
by Lemma~\ref{cutord} must coincide with the ordering determined by $(D,E)$. This shows 
that $\rho$ is order preserving. By Corollary~\ref{pBcor}, the extension of $v_0$ to $K(z)$ 
is uniquely determined by the value $\alpha_2=\rho\alpha_1$ assigned to the element 
$z$, so $\rho \circ v_1=v_2\,$, showing that $v_1$ and $v_2$ are equivalent over $v_0\,$.

Now assume that $D=\emptyset$ (the case of $E=\emptyset$ is treated analogously). Then again,
$\alpha_1\notin v_0K$ and $\alpha_2\notin v_0K$ are not torsion elements modulo $v_0 K$, and the proof proceeds as before.
\end{proof}

\begin{remark}
More generally, the converse in part 2) of the lemma always holds when the archimedean class 
of any element realizing the cut is not equal to the archimedean class of any element in 
$\Gamma$. This happens if and only if there is a convex subgroup $\Delta$ of $\Gamma$ which 
is cofinal in $D$ or coinitial in $E$.
\end{remark}

\mn
%
%
%
\subsection{The sets $v(x-K)$}                           \label{sectva-K}
\mbox{ }\sn
Take an extension $(L|K,v)$ and an element $x\in L$. In this section we will 
investigate the set
\[
v(x-K)\>:=\> \{v(x-c)\mid c\in K\}\>\subseteq\> vL\cup\{\infty\}\>.
\]

The following facts were proved in \cite[Lemma 3.1]{B}:
\begin{lemma}                     \label{maxapp}
Take a valued field extension $(L|K,v)$ and $x\in L$. 
\sn
1) The set $v(x-K)\cap vK$ is an initial segment of $vK$.
\sn
2) The set $v(x-K)\setminus vK$ has at most one element.
\sn
3) If $\alpha\in v(x-K)\setminus vK$, then $\alpha=\max v(x-K)$ and 
\[
v(x-K)\cap vK=\{\gamma\in vK\mid \gamma<\alpha\}\>,
\]
which is the lower cut set of the cut induced by $\alpha$ in $vK$.
\sn
4) For every $c\in K$, $\{\gamma\in v(x-K)\mid \gamma<v(x-c)\}$ is a subset of $vK$ and
thus an initial segment of $vK$.
\sn
5) We have $v(x-c)=\max v(x-K)$ if and only if $v(x-c)\notin vK$ or $(d(x-c))v\notin 
Kv$ for every $d\in K$ such that $v(d(x-c))=0$.
\end{lemma}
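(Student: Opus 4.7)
The plan is to exploit the ultrametric identity $v(a+b)=\min\{v(a),v(b)\}$ whenever $v(a)\ne v(b)$, together with Lemma~\ref{approx}, throughout all five parts. Each statement reduces to a short calculation using the fact that if $c,c'\in K$, then $(x-c')-(x-c)=c-c'\in K$, so $v(x-c')$ can be compared with $v(x-c)$ via $v(c-c')\in vK$.

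For part 1, given $\alpha=v(x-c)\in v(x-K)\cap vK$ and any $\gamma\in vK$ with $\gamma<\alpha$, I would pick $d\in K$ with $vd=\gamma$ and compute $v(x-(c-d))=\min\{\alpha,\gamma\}=\gamma\in v(x-K)\cap vK$. For part 2, if $\alpha=v(x-c)$ and $\beta=v(x-c')$ lie in $v(x-K)\setminus vK$ with, say, $\alpha<\beta$, then $v(c-c')=\min\{\alpha,\beta\}=\alpha$, forcing $\alpha\in vK$, a contradiction. For part 3, given any $c'\in K$, since $\alpha\notin vK$ we have $v(c-c')\ne\alpha$, so
\[
v(x-c')\>=\>\min\{\alpha,v(c-c')\}\>,
\]
which is $\leq\alpha$ with equality precisely when $v(c-c')>\alpha$; hence $\alpha=\max v(x-K)$. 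If instead $v(c-c')<\alpha$, then $v(x-c')=v(c-c')\in vK$, and conversely by part 1 every $\gamma\in vK$ with $\gamma<\alpha$ is realized by a suitable $c'=c-d$.

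For part 4, if $v(x-c')<v(x-c)$, the same identity gives $v(x-c')=v(c-c')\in vK$, so the set in question lies in $v(x-K)\cap vK$; by part 1 it is an initial segment of $vK$. Finally, part 5 is essentially a rephrasing of Lemma~\ref{approx}: the statement $v(x-c)<\max v(x-K)$ means there exists $c'\in K$ with $v(x-c')>v(x-c)$, which by that lemma is equivalent to the existence of $d\in K$ with $vd(x-c)=0$ and $d(x-c)v\in Kv$. Negating, $v(x-c)=\max v(x-K)$ iff no such $d$ exists, which happens precisely when either $v(x-c)\notin vK$ (so no $d$ with $vd(x-c)=0$ exists at all) or, when $v(x-c)\in vK$, every admissible $d$ yields $d(x-c)v\notin Kv$.

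The main obstacle is a small one: keeping the case $v(x-c)\notin vK$ straight in part 5, since then the condition on $d$ is vacuous but the conclusion $v(x-c)=\max v(x-K)$ must be extracted from part 3 rather than from Lemma~\ref{approx} directly. Otherwise every step is a one-line application of the ultrametric trick.
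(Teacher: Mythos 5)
Your proposal is correct and follows essentially the same route as the paper: parts 1)--4) via the ultrametric identity $v(a+b)=\min\{v(a),v(b)\}$ for $v(a)\ne v(b)$ (translating $c$ by an element of value $\gamma$ for part 1, comparing two values for parts 2 and 3), and part 5) as the contrapositive of Lemma~\ref{approx}. Your extra care about the vacuous case $v(x-c)\notin vK$ in part 5) is exactly the right point to watch and is handled correctly.
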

\begin{proof}
Take any $c\in K$ and $\gamma\in vK$ such that $\gamma<v(x-c)$. Pick $c_\gamma\in K$
such that $vc_\gamma=\gamma$. Then $c+c_\gamma\in K$ and $v(x-(c+c_\gamma))= 
\min\{v(x-c),vc_\gamma\}=vc_\gamma=\gamma$, so $\gamma\in v(x-K)\cap vK$. This proves 
part 1) and the inclusion ``$\supseteq$'' in the second assertion of part 3).
\sn
Further, take $c_1,c_2\in K$ such that $v(x-c_1)<v(x-c_2)$. Then 
\[
v(x-c_1)\>=\>\min\{v(x-c_1),v(x-c_2)\}\>=\>v(c_2-c_1)\,\in\, vK\>,
\]
so the assertion of part 2) as well as the first assertion of part 3) and the 
inclusion ``$\subseteq$'' in its second assertion must hold.
\sn
4): If $v(x-c)\in vK$, then the assertion follows from part 1), and otherwise from part 3). 
\sn
5): This is the contrapositive of Lemma~\ref{approx}.
\end{proof}

\begin{lemma}                                     \label{va-K}
Take a valued field extension $(L|K,v)$ and elements $x,y\in L$. 
\sn
1) If $v(x-K)$  has no maximal element, then it is an initial segment of $vK$.
\sn
2) If $(K(x)|K,v)$ is immediate, then $v(x-K)$ has no maximal element.
\sn
3) If for all $x\in L$, $v(x-K)$ has no maximal element, then the extension 
$(L|K,v)$ is immediate.
\sn
4) If $v(x-y)>v(x-K)$, then $v(x-c)=v(y-c)$ for all $c\in K$, and $v(x-K)=v(y-K)$.
\sn
5) If $v(x-K)$  has no maximal element, then the following are equivalent:
\n
a) \ $v(x-y)>v(x-K)$,
\n
b) \ $v(x-y)\geq v(x-K)$,
\n
c) \ $v(x-c)=v(y-c)$ for all $c\in K$.
\sn
6) Take extensions $(L|K,v)$ and $(L(x)|L,v)$, and assume that $vL=vK$. If 
$v(x-K)\ne v(x-L)$, then for some $y\in L$, $v(x-y)>v(x-K)$ and $v(x-K)=v(y-K)$.
\end{lemma}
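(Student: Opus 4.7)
The plan is to dispatch the six parts in order, leaning on Lemma~\ref{maxapp} and Lemma~\ref{ievagus} throughout. Parts~1)--5) are short applications of those lemmas combined with the ultrametric inequality; only part~6) requires case analysis, and it is the one place where the hypothesis $vL=vK$ enters essentially.

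For part~1), combine Lemma~\ref{maxapp}(2) and (3): the set $v(x-K)\setminus vK$ has at most one element, and if nonempty this element is the maximum of $v(x-K)$. So when no maximum exists, $v(x-K)\subseteq vK$, and Lemma~\ref{maxapp}(1) then says this is an initial segment of $vK$. For part~2), given any $c\in K$ the element $x-c$ lies in $K(x)\setminus K$, so Lemma~\ref{ievagus} produces $a\in K$ with $v((x-c)-a)>v(x-c)$; hence $v(x-(c+a))>v(x-c)$, and no element of $v(x-K)$ can be maximal. Part~3) is just the converse application of Lemma~\ref{ievagus}: for any $x\in L\setminus K$, the value $vx=v(x-0)$ lies in $v(x-K)$ and by hypothesis is not maximal, so some $c\in K$ satisfies $v(x-c)>vx$.

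For part~4), if $v(x-y)>v(x-c)$ for every $c\in K$, then by the strict triangle inequality $v(y-c)=v((y-x)+(x-c))=v(x-c)$, which simultaneously yields $v(x-K)=v(y-K)$. Part~5) is then a short juggle: (a)$\Rightarrow$(c) is part~4), (a)$\Rightarrow$(b) is trivial, and (c)$\Rightarrow$(b) follows from $v(x-y)\geq\min\{v(x-c),v(y-c)\}=v(x-c)$ for every $c\in K$; finally (b)$\Rightarrow$(a) is where the no-maximum hypothesis matters, because if $v(x-y)$ equalled $v(x-c_0)$ for some $c_0\in K$ that common value would be a maximum of $v(x-K)$, contradicting the hypothesis.

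Part~6) is the main step. Choose $y\in L$ with $v(x-y)\in v(x-L)\setminus v(x-K)$, which exists by the hypothesis $v(x-K)\neq v(x-L)$. I claim $v(x-y)>v(x-K)$; once this is established, part~4) immediately delivers the companion equality $v(x-K)=v(y-K)$. For the claim, suppose towards contradiction that $v(x-y)\leq v(x-c)$ for some $c\in K$. Equality is impossible, since it would put $v(x-y)$ into $v(x-K)$, contradicting the choice of $y$. So $v(x-y)<v(x-c)$, and the ultrametric forces $v(x-y)=v(y-c)\in vL=vK$. Now I invoke Lemma~\ref{maxapp}: if $v(x-c)\in vK$, then by Lemma~\ref{maxapp}(1) the set $v(x-K)\cap vK$ is an initial segment of $vK$ containing $v(x-c)$, hence containing $v(x-y)$; if instead $v(x-c)\notin vK$, then Lemma~\ref{maxapp}(3) identifies $v(x-c)$ as the maximum of $v(x-K)$ with lower cut set $\{\gamma\in vK\mid\gamma<v(x-c)\}$, which again contains $v(x-y)$. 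Either way $v(x-y)\in v(x-K)$, the desired contradiction. The mild obstacle is remembering to use $vL=vK$ at exactly this point: without it the conclusion $v(x-y)\in vK$ would fail and the initial-segment arguments would not apply.
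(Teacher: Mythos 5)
Your proof is correct and follows essentially the same route as the paper: each part is dispatched via Lemma~\ref{maxapp}, Lemma~\ref{ievagus} and the ultrametric inequality, with only cosmetic differences (e.g.\ you derive part 1) from parts 2), 3) and 1) of Lemma~\ref{maxapp} rather than from its part 4), prove part 2) directly from Lemma~\ref{ievagus} instead of from Lemma~\ref{maxapp}(5), and arrange the equivalences in part 5) as a cycle rather than via the contrapositive). In part 6) your explicit case split on whether $v(x-c)\in vK$ is in fact a slightly more careful justification of the step the paper covers by citing Lemma~\ref{maxapp}(4).
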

\begin{proof}
1): If it has no maximal element, then $v(x-K)$ is the union of the sets 
$\{\gamma\in v(x-K)\mid \gamma<v(x-c)\}$ where $c$ runs through all elements of $K$.
By part 4) of Lemma~\ref{maxapp} all of these sets are initial segments of $vK$, hence 
so is their union.
\sn
2): This follows from part 5) of Lemma~\ref{maxapp}.
\sn
3): This follows from Lemma~\ref{ievagus}.
\sn
4): If $c\in K$, then by assumption, $v(x-y)>v(x-c)$, which implies that 
$v(y-c)=\min\{v(x-y),v(x-c)\}=v(x-c)$. As this holds for all $c\in K$, we also 
obtain that $v(x-K)=v(y-K)$.

\sn
5): Assume that $v(x-K)$  has no maximal element. Then assertions a) and b) are
trivially equivalent. The implication a)$\Rightarrow$c) is part 4) of our lemma. 
Now suppose that assertion b) does not hold, and pick some 
$c\in K$ such that $v(x-c)>v(x-y)$. It follows that 
\[
v(y-c)\>=\>\min\{v(x-y),v(x-c)\}\>=\>v(x-y)\>\ne\> v(x-c)\>,
\]
so assertion c) does not hold.

\sn
6): Since $K\subseteq L$, we have that $v(x-K)\subseteq v(x-L)$. 
Assume that $v(x-K)\ne v(x-L)$. Then there exists $y\in L$ such that 
$v(x-y)\notin v(x-K)$. Suppose that there is some 
$c\in K$ such that $v(x-c)>v(x-y)$. Then $v(x-y)=\min\{v(x-y),v(x-c)\}=v(y-c)\in 
vL=vK$; but then $v(x-y)\in v(x-K)$ by part 4) of Lemma~\ref{maxapp}, a contradiction.
This proves that $v(x-y)>v(x-K)$. Hence by part 4) of the present lemma, 
$v(x-K)=v(y-K)$.
\end{proof}

\pars
Note that the converse of part 2) of this lemma does in general not hold.

\mn
%
%
\subsection{Pseudo Cauchy sequences}              \label{sectpcs}
\mbox{ }\sn
Take a valued field $(K,v)$ and a sequence
$(c_\nu)_{\nu<\lambda}$ of elements in $K$, indexed by ordinals
$\nu<\lambda$ where $\lambda$ is a limit ordinal. It is called a
\bfind{pseudo Cauchy sequence} (or \bfind{pseudo convergent sequence}) if
\sn
{\bf (PCS)} \ $v(c_{\tau}-c_{\sigma})\> >\>v(c_{\sigma}-c_\rho)\>$
whenever $\rho<\sigma<\tau<\lambda\>$.
\sn
We will say
that an assertion \bfind{holds ultimately} for $(c_\nu)_{\nu<\lambda}$
if there is $\nu_0< \lambda$ such that the assertion holds for all
$c_\nu$ with $\nu\geq\nu_0\,$.

We set\glossary{$\gamma_\nu$}
\[
\gamma_\nu\>:=\>v(c_{\nu+1}-c_\nu)\>.
\]
If $(c_\nu)_{\nu<\lambda}$ is a pseudo Cauchy sequence, then
$(\gamma_\nu)_{\nu<\lambda}$ is strictly increasing.

\begin{lemma}                               \label{proppCsf}
Let $(c_\nu)_{\nu<\lambda}$ be a pseudo Cauchy sequence in
$(K,v)$. Then
\begin{equation}                            \label{gammanuf}
v(c_\nu-c_\mu)\,=\,\gamma_\mu\;\mbox{ whenever }\mu<\nu<\lambda\>.
\end{equation}
If $x\in K$, then either\n
\begin{equation}                            \label{ylimitf}
v(x-c_\mu)\,<\,v(x-c_\nu)\;\mbox{ whenever }\mu<\nu<\lambda\>,
\end{equation}
or there is $\nu_0<\lambda$ such that
\[
v(x-c_\nu)\,=\,v(x-c_{\nu_0})\mbox{ whenever } \nu_0\leq\nu<\lambda\>.
\]
Property (\ref{ylimitf}) is equivalent to
\begin{equation}                            \label{ylimit=f}
v(x-c_\nu)\,=\,\gamma_\nu\;\mbox{ for all }\nu<\lambda\>.
\end{equation}
In other words, if $(v(x-c_\nu))_{\nu<\lambda}$ is not strictly
increasing, then it is ultimately constant.
\end{lemma}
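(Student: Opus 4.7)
The plan is to establish the three assertions in order, with each step relying only on the ultrametric triangle inequality, the defining property {\bf (PCS)}, and the fact that $(\gamma_\nu)_{\nu<\lambda}$ is strictly increasing.

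For (\ref{gammanuf}), I would fix $\mu<\nu<\lambda$ and write
\[
c_\nu-c_\mu \;=\; (c_\nu - c_{\mu+1}) + (c_{\mu+1} - c_\mu).
\]
When $\nu=\mu+1$ the identity is just the definition of $\gamma_\mu$. Otherwise {\bf (PCS)} applied to $\rho=\mu$, $\sigma=\mu+1$, $\tau=\nu$ gives $v(c_\nu-c_{\mu+1}) > \gamma_\mu = v(c_{\mu+1}-c_\mu)$, so the ultrametric triangle inequality forces $v(c_\nu-c_\mu)=\gamma_\mu$.

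Next I would prove the dichotomy for $x\in K$. The key identity is, for $\mu<\nu<\lambda$,
\[
\gamma_\mu \;=\; v(c_\nu-c_\mu) \;=\; v\bigl((x-c_\mu)-(x-c_\nu)\bigr),
\]
to which I apply a case analysis on how $v(x-c_\mu)$ and $v(x-c_\nu)$ compare. If (\ref{ylimitf}) fails, there exist $\mu<\nu$ with $v(x-c_\mu)\geq v(x-c_\nu)$. In each of the subcases ($v(x-c_\mu)>v(x-c_\nu)$, or $v(x-c_\mu)=v(x-c_\nu)$ with either $=\gamma_\mu$ or $<\gamma_\mu$), the identity above pins down a value $\alpha$ with $\alpha<\gamma_\mu$ or $\alpha<\gamma_\nu$ such that $v(x-c_{\nu'})=\alpha$ for all sufficiently large $\nu'$; the verification is just one more application of the ultrametric triangle inequality combined with the fact that $\gamma_{\nu'}>\gamma_\nu$ for $\nu'>\nu$. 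This yields a $\nu_0$ past which $v(x-c_\nu)$ is constant.

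Finally, for the equivalence of (\ref{ylimitf}) and (\ref{ylimit=f}), the backward direction is immediate because $(\gamma_\nu)$ is strictly increasing. For the forward direction I would write
\[
x-c_\nu \;=\; (x-c_{\nu+1}) + (c_{\nu+1}-c_\nu),
\]
note that $v(x-c_\nu) < v(x-c_{\nu+1})$ by (\ref{ylimitf}) while $v(c_{\nu+1}-c_\nu)=\gamma_\nu$, and rule out (by ultrametric triangle) the cases $v(x-c_{\nu+1})\leq\gamma_\nu$; the only option left forces $v(x-c_\nu)=\gamma_\nu$. I do not anticipate a real obstacle here — the whole lemma is a careful bookkeeping of the ultrametric triangle inequality — the only thing to watch is handling the three subcases of the dichotomy uniformly so that the argument in Step~2 does not implicitly repeat the conclusion it is trying to prove.
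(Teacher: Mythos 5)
Your proof is correct. The paper states this lemma without proof (it is the standard bookkeeping from Kaplansky's development of pseudo Cauchy sequences), and your argument — establishing (\ref{gammanuf}) via the decomposition $c_\nu-c_\mu=(c_\nu-c_{\mu+1})+(c_{\mu+1}-c_\mu)$ and {\bf (PCS)}, then deriving the dichotomy and the equivalence of (\ref{ylimitf}) and (\ref{ylimit=f}) from the ultrametric triangle law together with the strict monotonicity of $(\gamma_\nu)$ — is exactly the standard, correct route.
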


Taking $x=0$, we obtain:
\begin{corollary}                           \label{incfixval}
For every pseudo Cauchy sequence $(c_\nu)_{\nu<\lambda}$, either
$(vc_\nu)_{\nu<\lambda}$ is strictly increasing or ultimately constant.
\end{corollary}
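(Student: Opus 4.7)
The plan is to apply Lemma~\ref{proppCsf} directly with the specific choice $x = 0$. Since $0 \in K$, this substitution is legitimate, and the valuation-theoretic identity $v(-a) = v(a)$ immediately translates the conclusions of the lemma into statements about the sequence of values $(vc_\nu)_{\nu<\lambda}$ itself.

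More precisely, I would first observe that $v(0 - c_\nu) = v(-c_\nu) = vc_\nu$ for every $\nu < \lambda$. Substituting $x = 0$ into the dichotomy provided by Lemma~\ref{proppCsf}, the first alternative
\begin{equation*}
v(x - c_\mu) < v(x - c_\nu) \quad \text{whenever } \mu < \nu < \lambda
\end{equation*}
becomes the statement that $vc_\mu < vc_\nu$ whenever $\mu < \nu < \lambda$, i.e., that $(vc_\nu)_{\nu<\lambda}$ is strictly increasing. The second alternative
\begin{equation*}
\exists\, \nu_0 < \lambda \;\text{ such that }\; v(x - c_\nu) = v(x - c_{\nu_0}) \text{ for all } \nu_0 \leq \nu < \lambda
\end{equation*}
becomes the statement that $vc_\nu = vc_{\nu_0}$ for all $\nu \geq \nu_0$, i.e., that $(vc_\nu)_{\nu<\lambda}$ is ultimately constant.

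There is essentially no obstacle here: the corollary is a tautological specialization of the lemma. The only thing worth verifying is the legitimacy of the substitution, which is immediate since $0$ is an element of $K$ and the lemma's hypothesis places no further restriction on $x$. Consequently, the proof reduces to the one-line invocation of Lemma~\ref{proppCsf} with $x = 0$, together with the observation $v(-c_\nu) = vc_\nu$.
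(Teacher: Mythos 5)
Your proof is correct and is exactly the paper's own argument: the corollary is introduced there with the phrase ``Taking $x=0$, we obtain,'' i.e., it is precisely the specialization of Lemma~\ref{proppCsf} to $x=0$ together with $v(-c_\nu)=vc_\nu$. Nothing further is needed.
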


Note that if $(L|K,v)$ is an extension of valued fields and
$(c_\nu)_{\nu<\lambda}$ is a pseudo Cauchy sequence in $(K,v)$, then it
is also a pseudo Cauchy sequence in $(L,v)$. An element $x\in L$ is
called a {\bf pseudo limit} (or just {\bf limit}) of
$(c_\nu)_{\nu<\lambda}$\index{limit of a pseudo Cauchy sequence} if it
satisfies (\ref{ylimitf}), or equivalently, (\ref{ylimit=f}). Since
$v(x-c_{\nu+1})\geq\gamma_{\nu+1}> \gamma_\nu$ implies that
$v(x-c_\nu)=\min\{\gamma_\nu,v(x-c_{\nu+1})\}= \gamma_\nu\,$, both
conditions are equivalent to
\begin{equation}                 \label{limitpCs}
v(x-c_\nu)\,\geq\,\gamma_\nu\;\mbox{ for all }\nu<\lambda\>.
\end{equation}

We will only be interested in pseudo Cauchy sequences in $(K,v)$ that have no limit
in $K$ and therefore do not have a last element. This justifies that from the start 
we have indexed pseudo Cauchy sequences by limit ordinals.

The following is Theorem 1 of \cite{[Ka]}.
\begin{theorem}                                   \label{KT1}
If $(L|K,v)$ is an immediate extension, then every $a\in L\setminus K$ is limit of 
a pseudo Cauchy sequence in $(K,v)$ without a limit in $K$.
\end{theorem}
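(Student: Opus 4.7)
The plan is to reduce the theorem to the statement that $v(a-K)$ has no maximum element, and then to harvest a pseudo Cauchy sequence from a cofinal family of such values. For the first step, I would take an arbitrary $c\in K$; since $a\notin K$, the shift $a-c$ still lies in $L\setminus K$, so Lemma \ref{ievagus} produces $b\in K$ with $v((a-c)-b)>v(a-c)$, that is, $v(a-(c+b))>v(a-c)$. Hence no element of $v(a-K)$ is maximal. By Lemma \ref{va-K}(1) the set $v(a-K)$ is therefore an initial segment of $vK$ with no largest element.

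Next I would pick a cofinal transfinite sequence in $v(a-K)$. Let $\lambda$ denote its cofinality; since $v(a-K)$ has no top, $\lambda$ is a limit ordinal. Choose a strictly increasing cofinal family $(\gamma_\nu)_{\nu<\lambda}$ inside $v(a-K)$, and for each $\nu<\lambda$ select $c_\nu\in K$ with $v(a-c_\nu)=\gamma_\nu$. The pseudo Cauchy condition then falls out of a routine ultrametric computation: for $\rho<\sigma<\lambda$ one has
\[
v(c_\sigma-c_\rho)\,=\,v\bigl((a-c_\rho)-(a-c_\sigma)\bigr)\,=\,\min\{\gamma_\rho,\gamma_\sigma\}\,=\,\gamma_\rho,
\]
so whenever $\rho<\sigma<\tau<\lambda$ we get $v(c_\tau-c_\sigma)=\gamma_\sigma>\gamma_\rho=v(c_\sigma-c_\rho)$, verifying (PCS). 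In particular $\gamma_\nu=v(c_{\nu+1}-c_\nu)$, consistent with the notation of the section, and the defining equation $v(a-c_\nu)=\gamma_\nu$ shows that $a$ is a pseudo limit via condition (\ref{ylimit=f}).

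It remains to rule out a pseudo limit in $K$. If $c\in K$ satisfied (\ref{limitpCs}), then $v(c-c_\nu)\geq\gamma_\nu$ for all $\nu$, and combining this with $v(a-c_\nu)=\gamma_\nu$ through the ultrametric inequality gives $v(a-c)\geq\min\{v(a-c_\nu),v(c-c_\nu)\}\geq\gamma_\nu$ for every $\nu<\lambda$. Since $c\in K$, the value $v(a-c)$ itself belongs to $v(a-K)$; but $v(a-K)$ has no maximum, so some $\gamma\in v(a-K)$ exceeds $v(a-c)$, and then by cofinality of $(\gamma_\nu)_{\nu<\lambda}$ there is $\nu$ with $\gamma_\nu\geq\gamma>v(a-c)$, contradicting the preceding inequality. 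The only mildly delicate point in the whole argument is the set-theoretic selection of the cofinal sequence, which is standard; the substantive content of the theorem is carried entirely by Lemma \ref{ievagus}, that is, by the immediacy of $(L|K,v)$.
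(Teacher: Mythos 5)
Your proof is correct. Note that the paper itself does not prove Theorem~\ref{KT1}; it simply cites it as Theorem~1 of Kaplansky and later establishes the approximation-type analogue. The route the paper's machinery suggests would be: immediacy of $(L|K,v)$ gives that $\appr_v(a,K)$ is an immediate approximation type (Lemma~\ref{imme}, parts 2 and 4), and Proposition~\ref{pcsvsat}, part 2, then produces an associated pseudo Cauchy sequence without limit in $K$, of which $a$ is a limit by the definition of ``associated''. Your argument instead works directly with the value set $v(a-K)$: Lemma~\ref{ievagus} shows it has no maximum, a cofinal strictly increasing transfinite family $(\gamma_\nu)_{\nu<\lambda}$ is selected together with witnesses $c_\nu$, the condition (PCS) follows from the ultrametric equality $v(c_\sigma-c_\rho)=\gamma_\rho$, and the nonexistence of a limit in $K$ follows from cofinality plus the fact that $v(a-c)\in v(a-K)$ cannot dominate all $\gamma_\nu$. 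This is essentially the classical Ostrowski--Kaplansky construction; it is more elementary than the paper's framework (it bypasses nests of balls entirely) and is self-contained, whereas the paper's approach packages the same cofinal data into the unique approximation type and defers the transfinite selection to the proof of Proposition~\ref{pcsvsat}. All steps check out, including the implicit use of Lemma~\ref{maxapp} to ensure $v(a-K)\subseteq vK$ once it has no maximum.
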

\n
An analogue of this theorem for 
immediate approximation types will be proved in Lemma~\ref{imme}.

\pars
We consider a pseudo Cauchy sequence $(c_\nu)_{\nu<\lambda}$ 
in $(K,v)$ and a polynomial $f\in K[X]$. We will say that 
$(c_\nu)_{\nu<\lambda}$ {fixes the value of} $f$\index{pseudo Cauchy sequence
fixes value of a polynomial} if the sequence $vf(c_\nu)_{\nu<\lambda}$
is ultimately constant. If $(c_\nu)_{\nu<\lambda}$ fixes the value
of every polynomial in $K[X]$, then it is said to be of
{\bf transcendental type}.\index{pseudo Cauchy sequence of transcendental type} 
If there is some $f\in K[X]$ whose value is not fixed by $(c_\nu)_{\nu<\lambda}$, 
then $(c_\nu)_{\nu<\lambda}$ is said to be of {\bf algebraic
type}.\index{pseudo Cauchy sequence of algebraic type}
The following are Theorems 2 and 3 of \cite{[Ka]}.
\begin{theorem}                                   \label{KT2}
For every pseudo Cauchy sequence $(c_\nu)_{\nu<\lambda}$ in
$(K,v)$ of transcendental type there exists a simple immediate transcendental
extension $(K(x),v)$ such that $x$ is a limit of $(c_\nu)_{\nu<\lambda}$.
If $(K(y),v)$ is another extension field of $(K,v)$ such that $y$
is a limit of $(c_\nu)_{\nu<\lambda}$, then $y$ is also transcendental over $K$ 
and the isomorphism between $K(x)$ and $K(y)$ over $K$ sending $x$ to $y$ is
valuation preserving.
\end{theorem}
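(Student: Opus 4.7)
The plan follows Kaplansky's original argument. First, I take an element $x$ transcendental over $K$ in any ambient field, and define a map on the polynomial ring $K[x]$ by declaring $v(f(x))$ to be the ultimate value of $v(f(c_\nu))$. This ultimate value exists in $vK$ and is finite for every nonzero $f$ since $(c_\nu)_{\nu<\lambda}$ is of transcendental type, so extending by $v(f/g) = v(f) - v(g)$ gives a well-defined function $K(x) \to vK \cup \{\infty\}$. The valuation axioms descend from the corresponding identities on $v(f(c_\nu))$ by passage to the ultimate value, the nontriviality of ``$v = \infty \Rightarrow f = 0$'' being exactly the transcendental-type hypothesis.

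Applying the definition to the linear polynomials $X - c_\nu$ and invoking identity~(\ref{gammanuf}) of Lemma~\ref{proppCsf}, I obtain $v(x - c_\nu) = \gamma_\nu$ for every $\nu$, so $x$ is a pseudo limit of $(c_\nu)_{\nu<\lambda}$. To verify immediacy I use Lemma~\ref{ievagus}: for each nonzero $f \in K[X]$ I claim $v(f(x) - f(c_\nu)) > v(f(x))$ for $\nu$ sufficiently large. This follows from the Taylor-style expansion
\[
f(x) - f(c_\nu) \;=\; \sum_{i=1}^{\deg f} \partial_i f(c_\nu)\,(x - c_\nu)^i,
\]
using Hasse derivatives $\partial_i$ so that the formula remains valid in every characteristic. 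Each $v(\partial_i f(c_\nu))$ is ultimately constant (again by transcendental type applied to the polynomials $\partial_i f$), while $i\gamma_\nu$ is strictly increasing, so the minimum of $v(\partial_i f(c_\nu)) + i\gamma_\nu$ over the finite range $1 \leq i \leq \deg f$ eventually exceeds $v(f(x)) = v(f(c_\nu))$. Extending from polynomials to rational functions is routine, and then Lemma~\ref{ievagus} shows $(K(x)|K,v)$ is immediate.

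For the uniqueness statement, suppose $y$ is another pseudo limit of $(c_\nu)_{\nu<\lambda}$ lying in an extension $(K(y),v)$. The same Hasse-derivative estimate, now performed inside $K(y)$ using $v(y - c_\nu) = \gamma_\nu$, yields $v(f(y) - f(c_\nu)) > v(f(c_\nu))$ for large $\nu$, hence $v(f(y)) = v(f(c_\nu))$ ultimately, for every $f \in K[X]$. In particular, $v(f(y)) \in vK$ is finite, so $f(y) \neq 0$; thus $y$ is transcendental over $K$, and the $K$-isomorphism $K(x) \to K(y)$ sending $x \mapsto y$ is valuation preserving, since on polynomials both valuations are given by the same rule $f \mapsto \text{ultimate } v(f(c_\nu))$. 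The main technical point is the Hasse-derivative estimate: one has to confirm that the passage from ``each $v(\partial_i f(c_\nu))$ is ultimately constant'' to ``the minimum of $v(\partial_i f(c_\nu)) + i\gamma_\nu$ eventually exceeds any fixed value'' is justified, which it is because the minimum is taken over the \emph{finite} set $1 \leq i \leq \deg f$ and $\gamma_\nu$ is strictly increasing without a last term.
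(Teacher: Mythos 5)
Your overall strategy is sound and is essentially Kaplansky's original argument, which is also the engine behind the paper's treatment (the paper proves the approximation-type analogue, Theorem~\ref{KT2at}, via Lemmas~\ref{OST} and~\ref{C4+}, and recovers Theorem~\ref{KT2} through the correspondence of Proposition~\ref{pcsvsat}). However, there is a genuine gap precisely at the step you yourself single out as ``the main technical point,'' and the justification you give for it is false. You claim that because the minimum of $v(\partial_i f(c_\nu)) + i\gamma_\nu$ is taken over a finite index set and $(\gamma_\nu)_{\nu<\lambda}$ is strictly increasing without a last term, this minimum ``eventually exceeds any fixed value.'' In a general ordered abelian group this is simply not true: value groups of Krull valuations need not be archimedean, and a strictly increasing sequence with no last term can be bounded above (e.g.\ the sequence $(0,n)$ in $\Z\times\Z$ with the lexicographic order never exceeds $(1,0)$). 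So strict increase of $\beta_{\bf h}+{\bf h}\gamma_\nu$ alone does not give $\beta_{\bf h}+{\bf h}\gamma_\nu > vf(x)$ ultimately, and without that inequality neither the immediacy of $(K(x)|K,v)$ nor the ultimate equality $vf(y)=vf(c_\nu)$ in the uniqueness part is established.

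The conclusion you need is nevertheless correct, but it must be derived from the ultrametric law rather than from any cofinality of the increasing sequence; this is exactly how the paper argues in Lemma~\ref{C4+}. Concretely: for $\mu>\nu$ both sufficiently large, the Taylor expansion inside $K$ gives $v(f(c_\mu)-f(c_\nu))=\beta_{\bf h}+{\bf h}\gamma_\nu$, while the ultrametric inequality gives $v(f(c_\mu)-f(c_\nu))\geq\min\{vf(c_\mu),vf(c_\nu)\}=\beta$, where $\beta$ is the ultimate value of $vf(c_\nu)$. Hence the strictly increasing quantity $\beta_{\bf h}+{\bf h}\gamma_\nu$ is bounded below by the \emph{fixed} value $\beta$, and being strictly increasing it must exceed $\beta=vf(x)$ strictly from some index on. (Equivalently: if $vf(x)\neq vf(c_\nu)$ held cofinally, then $v(f(x)-f(c_\nu))=\min\{vf(x),vf(c_\nu)\}$ would be ultimately constant, contradicting the strict increase of $\beta_{\bf h}+{\bf h}\gamma_\nu$.) With this repair both the immediacy argument via Lemma~\ref{ievagus} and the uniqueness argument go through as you outline them; the rest of your proposal (construction of $v$ by ultimate values, well-definedness via transcendental type, $v(x-c_\nu)=\gamma_\nu$ from (\ref{gammanuf}), and the transfer to $y$) is correct.
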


\begin{theorem}                                       \label{KT3}
Take a pseudo Cauchy sequence $(c_\nu)_{\nu<\lambda}$ in $(K,v)$ of algebraic type 
and a polynomial $f(X)\in K[X]$ of minimal degree whose value is not fixed by 
$(c_\nu)_{\nu<\lambda}$. If $\,a\,$ is a root of $\,f$, then there
exists an extension of $v$ from $K$ to $K(a)$ such that $(K(a)|K,v)$ is
an immediate extension and $a$ is a limit of $(c_\nu)_{\nu<\lambda}$.

If $(K(b),v)$ is another extension field of $(K,v)$ such that
$b$ is a limit of $(c_\nu)_{\nu<\lambda}$, then any field isomorphism between 
$K(a)$ and $K(b)$ over $K$ sending $a$ to $b$ will preserve the valuation.
\end{theorem}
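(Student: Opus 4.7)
The strategy is to construct the valuation on $K(a) \cong K[X]/(f)$ by fiat: for each nonzero $g \in K[X]$ with $\deg g < \deg f$, define $v(g(a))$ to be the ultimate (constant) value of $vg(c_\nu)$. First I would observe that $f$ is irreducible, for if $f = gh$ with both factors of positive degree strictly less than $\deg f$, then by minimality of $\deg f$ both $vg(c_\nu)$ and $vh(c_\nu)$ would be ultimately constant, forcing $vf(c_\nu) = vg(c_\nu)+vh(c_\nu)$ to be too --- contradicting the choice of $f$. With $f$ irreducible, every nonzero element of $K(a)$ is uniquely $g(a)$ with $\deg g < \deg f$, and the definition is unambiguous (a nonzero $g$ of degree $<\deg f$ has only finitely many roots in $K$, so $vg(c_\nu) \in vK$ eventually).

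Verifying the valuation axioms: addition is immediate. For multiplication, write $gh = qf + r$ with $\deg q, \deg r < \deg f$ (the degree bound on $q$ follows from $\deg g, \deg h < \deg f$) and set $C := v(g(a)) + v(h(a))$. Both $vq(c_\nu)$ and $vr(c_\nu)$ are ultimately constant by minimality of $\deg f$, while $v(q(c_\nu)f(c_\nu))$ is ultimately strictly increasing (since $vf(c_\nu)$ is, and $vq(c_\nu)$ is constant). From $r(c_\nu) = g(c_\nu)h(c_\nu) - q(c_\nu)f(c_\nu)$, if $v(q(c_\nu)f(c_\nu))$ failed to eventually exceed $C$, then strict monotonicity would force $vr(c_\nu) = v(q(c_\nu)f(c_\nu))$ ultimately, a strictly increasing sequence, contradicting ultimate constancy. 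Hence $v(q(c_\nu)f(c_\nu)) > C$ eventually, giving $vr(c_\nu) = C$ ultimately and so $v(r(a)) = C$ as required.

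Immediacy and the limit property come next. Applying the definition to $X - c_\nu$ (of degree $1 < \deg f$; the $\deg f = 1$ case is trivial) gives $v(a - c_\nu) = \gamma_\nu$, so $a$ satisfies (\ref{limitpCs}). Since every nonzero $g(a) \in K(a)$ has $v(g(a)) \in vK$, we have $vK(a) = vK$. For the residue field, the Taylor expansion $g(a) - g(c_\nu) = \sum_{j \geq 1}(g^{(j)}(c_\nu)/j!)(a - c_\nu)^j$, combined with the classical fact that $\gamma_\nu$ is cofinal in $vK$ (a consequence of minimality of $\deg f$ applied via the derivative $f'$, which gives $vf(c_\nu) = vf'(c_\nu) + \gamma_\nu$ ultimately), shows $v(g(a) - g(c_\nu)) > 0$ for large $\nu$ when $v(g(a)) = 0$; hence $g(a)v = g(c_\nu)v \in Kv$, so $K(a)v = Kv$. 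For uniqueness, the analogous Taylor computation in $(K(b),v)$ gives $v(f(b) - f(c_\nu)) \to \infty$, forcing $f(b) = 0$, so $b$ is $K$-algebraic with minimal polynomial $f$; the same formula then yields $v(g(b)) = v(g(a))$ for every $g$ of degree $<\deg f$, so the $K$-isomorphism $a \mapsto b$ is valuation-preserving.

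The main obstacle is the multiplicativity argument: in higher-rank value groups a strictly increasing sequence need not be cofinal, so the key inequality $v(q(c_\nu)f(c_\nu)) > C$ cannot be deduced from a naive ``tends to infinity'' assertion but must be extracted from the structural constraint that $vr(c_\nu)$ is ultimately constant. A closely related subtlety --- the cofinality of $\gamma_\nu$ in $vK$ --- underlies the immediacy proof and similarly requires invoking the minimality of $\deg f$. Once these two delicate points are settled, the remainder of the proof is routine bookkeeping around the Taylor expansion, and the uniqueness statement follows by applying the same formula in the second extension.
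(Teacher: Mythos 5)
Your route is genuinely different from the paper's: you construct the valuation on $K(a)\cong K[X]/(f)$ explicitly by declaring $v(g(a))$ to be the ultimate value of $vg(c_\nu)$ for $\deg g<\deg f$ and then verify the axioms, whereas the paper never writes the valuation down. Instead it translates the pseudo Cauchy sequence into an immediate approximation type (Proposition~\ref{pcsvsat}), starts from an \emph{arbitrary} extension $w$ of $v$ to $K(a)$ and composes with an automorphism of $\tilde{K}|K$ to land on a conjugate root whose distances to $K$ are not fixed (Lemma~\ref{nfvfreal}), and then obtains immediacy and uniqueness from Lemma~\ref{CK1}, which rests on Lemma~\ref{C4+} and the Ostrowski--Kaplansky Lemma~\ref{OST}. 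Your multiplicativity argument is correct and is exactly the right way to exploit the tension between an ultimately constant sequence and an ultimately strictly increasing one; note, though, that the dichotomy ``$vf(c_\nu)$ not ultimately fixed $\Rightarrow$ ultimately strictly increasing'' is itself a nontrivial consequence of Lemma~\ref{OST} applied to the Taylor expansion and must be proved or cited, and that in positive characteristic the Taylor coefficients should be Hasse--Schmidt derivatives $\partial_j g$ rather than $g^{(j)}/j!$.

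There is, however, a genuine error in the immediacy and uniqueness steps: the ``classical fact that $\gamma_\nu$ is cofinal in $vK$'' is false. The $\gamma_\nu$ are only cofinal in an initial segment of $vK$ (the support of the associated approximation type); the paper explicitly reserves the name \emph{completion type} for the special case $\supp{\bf A}=vK$ and treats $\supp{\bf A}\ne v_0K$ as a genuine possibility (part 2 of Theorem~\ref{MTatenh1}). For instance, if $vK$ has rank $\geq 2$ and the sequence lives in a proper convex subgroup, no amount of minimality of $\deg f$ makes the $\gamma_\nu$ cofinal, and the identity $vf(c_\nu)=vf'(c_\nu)+\gamma_\nu$ (which anyway can fail, since the dominant index $h$ in Lemma~\ref{OST} need not be $1$) would not yield cofinality even if it held. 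The residue-field step does not need cofinality: from $v(g(a)-g(c_\nu))=\beta_h+h\gamma_\nu$ strictly increasing and $vg(c_\nu)$ ultimately equal to $vg(a)$, one gets $v(g(a)-g(c_\nu))\geq vg(a)$ with equality at most once, hence ultimately strict inequality --- the same trick you already use for multiplicativity, and the content of Lemma~\ref{C4+}. Likewise ``$v(f(b)-f(c_\nu))\to\infty$, forcing $f(b)=0$'' is wrong: a limit $b$ of an algebraic-type sequence need not be a root of $f$ (it may be transcendental over $K$). This does not sink the uniqueness claim, because the theorem \emph{hypothesizes} a $K$-isomorphism sending $a$ to $b$, which already entails $f(b)=0$; but the equality $vg(b)=vg(a)$ must then again be extracted from the strict-increase argument rather than from any cofinality assertion.
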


In Section~\ref{sectalgtriat} we will prove analogues of the last two theorems for 
immediate approximation types. These will also provide proofs of the above theorems 
through the connection we set up between pseudo Cauchy sequences and approximation
types in Section~\ref{sectiatpCs}.

\mn
%
%
\subsection{Ultrametric balls and nests}              \label{sectballs}
\mbox{ }\sn
We define the \bfind{closed ultrametric ball} in $(K,v)$ of radius $\gamma\in
vK\infty:=vK \cup\{\infty\}$ centered at $c\in K$ to be 
\[
B_\gamma (c,K) \>=\> \{a\in K \mid v(a-c)\geq\gamma\}\>,
\]
and the \bfind{open ultrametric ball} in $(K,v)$ of radius $\gamma\in
vK$ centered at $c\in K$ to be 
\[
B_\gamma^\circ (c,K) \>=\> \{a\in K \mid v(a-c)>\gamma\}\>.
\]
Note that under the topology induced by the valuation, both types of balls are 
open and closed. Note further that all of these balls contain their center and are
thus nonempty.

\begin{lemma}                                  \label{ballbasic}
1) If $B=B_\gamma (c,K)$ or $B=B_\gamma^\circ (c,K)$, and if $b\in B$, then 
$B=B_\gamma (b,K)$ or $B=B_\gamma^\circ (b,K)$, respectively. In other words, every element
in an (open or closed) ultrametric ball is its center.
\sn
2) Any two closed or open ultrametric balls $B,B'$ are either disjoint or 
comparable by inclusion.
\end{lemma}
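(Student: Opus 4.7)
The plan is to reduce both parts to a direct application of the ultrametric triangle inequality.

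For part 1), suppose $B = B_\gamma(c,K)$ and $b \in B$, so that $v(b-c) \geq \gamma$. I would show $B_\gamma(c,K) = B_\gamma(b,K)$ by a symmetric argument: for any $a \in K$, the strong triangle inequality gives
\[
v(a-b) \;\geq\; \min\{v(a-c), v(c-b)\} \quad \text{and} \quad v(a-c) \;\geq\; \min\{v(a-b), v(b-c)\}.
\]
Since both $v(b-c) \geq \gamma$ and (in each direction) the ``other'' hypothesis is $v(a-c) \geq \gamma$ respectively $v(a-b) \geq \gamma$, the conditions $v(a-c) \geq \gamma$ and $v(a-b) \geq \gamma$ are equivalent, proving $B_\gamma(c,K) = B_\gamma(b,K)$. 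For the open ball case, the same computation works verbatim with every ``$\geq \gamma$'' replaced by ``$> \gamma$'', which preserves the type of the ball.

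For part 2), assume $B \cap B' \neq \emptyset$ and pick $b \in B \cap B'$. By part 1), I may rewrite $B$ and $B'$ as balls centered at $b$, keeping their original radii and types. The proof then reduces to a short case analysis. If $B$ and $B'$ are both closed with radii $\gamma, \gamma'$, then without loss of generality $\gamma \leq \gamma'$, and clearly $B \supseteq B'$; the same holds if both are open. In the mixed case, say $B = B_\gamma(b,K)$ and $B' = B^\circ_{\gamma'}(b,K)$: if $\gamma \leq \gamma'$, any $a \in B'$ satisfies $v(a-b) > \gamma' \geq \gamma$, so $B' \subseteq B$; if $\gamma > \gamma'$, any $a \in B$ satisfies $v(a-b) \geq \gamma > \gamma'$, so $B \subseteq B'$.

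There is no substantive obstacle here; this is a standard consequence of the ultrametric inequality, and the only care needed is to keep track of the type (open versus closed) in the mixed case of part 2), where one must check both possibilities $\gamma \leq \gamma'$ and $\gamma > \gamma'$ and verify that each yields comparability.
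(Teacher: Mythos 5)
Your proof is correct and follows essentially the same route as the paper: part 1) by the ultrametric triangle inequality in both directions, and part 2) by using part 1) to re-center both balls at a common point and then comparing radii and types. Your case analysis in the mixed open/closed situation is slightly more explicit than the paper's, but the argument is the same.
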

\begin{proof}
1): If $b\in B_\gamma (c,K)$, then $v(c-b)\geq\gamma$, hence for every $a\in
B_\gamma (c,K)$, we have that $v(a-b)\geq\min\{v(a-c),v(b-c)\}\geq\gamma$. This 
proves that $B_\gamma (c,K)\subseteq B_\gamma (b,K)$. The reverse inclusion 
follows by symmetry.

The proof for the balls $B_\gamma^\circ (c,K)$ is analogous.
\sn
2): Assume that $B\cap B'\ne\emptyset$, and choose $c\in B\cap B'$. Then by part 1),
$c$ is a center of both $B$ and $B'$. If one of the two has a smaller radius than 
the other, then by definition it contains the other. If both have the same radius,
then the closed one contains the open one. The case of $B=B'$ is trivial.
\end{proof}

A \bfind{nest of balls} is a nonempty collection of closed and open balls linearly
ordered by inclusion. A \bfind{full nest of balls} is a nest of balls $\cN$ that
contains every closed or open ball which contains some ball $B\in\cN$, i.e.,
\[
\cN\>=\>\{B'\mid B'\mbox{ open or closed ultrametric ball containing some } B\in\cN\}\>.
\]
Part 2) of the above lemma shows that the set on the right hand side is indeed a 
nest. For any nest $\cN$ of balls, we set
\begin{equation}                         \label{bigcapN}
\bigcap\cN\>:=\>\bigcap_{B\in\cN} B\>.
\end{equation}
If a full nest $\cN$ contains a smallest ball $B$, then $\bigcap\cN=B$ and $\cN$ is 
\bfind{generated by} $B$ in the sense that $\cN$ consists of exactly all open and closed 
balls that contain $B$.

\begin{lemma}                                \label{uniqnest}
Take a nest of balls $\cN$.
\sn
1) If $B\in\cN$ is a ball with center $c$ and radius $\gamma$, then every other 
ultrametric ball in $\cN$ that contains $B$ is a closed or open ultrametric ball 
of radius $\leq\gamma$ with center $c$; that is, every larger ball that appears in $\cN$
is uniquely determined by $B$.
\sn
2) For every $\gamma \in vK\infty$, the nest $\cN$ contains at most one closed ball
and at most one open ball of radius $\gamma$.
\sn
3) There is a uniquely determined full nest $\cN'$ containing $\cN$ and such that 
each of its ultrametric balls contains an ultrametric ball from $\cN$. The nest
$\cN'$ satisfies $\bigcap\cN'=\bigcap\cN$.
\end{lemma}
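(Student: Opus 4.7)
The plan is to handle the three parts separately, each resting on Lemma~\ref{ballbasic}.

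For parts 1) and 2), the unifying idea is that if $B'\in\cN$ contains a member $B$ with center $c$, then by Lemma~\ref{ballbasic}(1) the element $c$ is also a center of $B'$, so $B'=B_{\gamma'}(c,K)$ or $B'=B_{\gamma'}^\circ(c,K)$ for some $\gamma'$. For part 1), taking $B$ with center $c$ and radius $\gamma$, the inclusion $B\subseteq B'$ then forces $\gamma'\leq\gamma$ by a brief case-check on the open/closed status of the two balls. For part 2), if $B_1,B_2\in\cN$ are both closed balls of radius $\gamma$ with centers $c_1,c_2$, then linearity of $\cN$ under inclusion gives (after swapping if needed) $B_1\subseteq B_2$, so $c_1\in B_2$; by the above, $B_2=B_\gamma(c_1,K)=B_1$. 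The same argument handles two open balls of the same radius.

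For part 3), I would define
\[
\cN'\>:=\>\{B\mid B\text{ is an open or closed ultrametric ball containing some }B_0\in\cN\},
\]
which is manifestly the only candidate for $\cN'$. That $\cN'$ is a nest follows because any $B_1',B_2'\in\cN'$ contain some $B_1,B_2\in\cN$; the smaller of $B_1,B_2$ lies in both $B_1'$ and $B_2'$, so by Lemma~\ref{ballbasic}(2) the balls $B_1'$ and $B_2'$ are comparable. Fullness is immediate from transitivity of inclusion. For uniqueness, any full nest $\cN''$ with the stated property satisfies $\cN''\subseteq\cN'$ by the very definition of $\cN'$, while $\cN'\subseteq\cN''$ follows by applying fullness of $\cN''$ to any $B'\in\cN'$ together with $\cN\subseteq\cN''$. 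Finally $\bigcap\cN'=\bigcap\cN$: the inclusion ``$\subseteq$'' is from $\cN\subseteq\cN'$, and ``$\supseteq$'' uses that each $B'\in\cN'$ contains a member of $\cN$ and hence contains $\bigcap\cN$.

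The main obstacle is more expositional than substantive: one has to be careful in the case-check of part 1) when open and closed balls are mixed, since in the presence of gaps in $vK$ the same set can legitimately be described with several different radii, so the phrase ``radius $\leq\gamma$'' must be interpreted consistently. Once this bookkeeping is arranged, everything collapses onto Lemma~\ref{ballbasic} and the linear-order structure of a nest.
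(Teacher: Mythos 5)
Your proof is correct and follows essentially the same route as the paper, which disposes of all three parts by appealing to Lemma~\ref{ballbasic} and the linear ordering of a nest; you simply spell out the case-checks and the double-inclusion argument for uniqueness in part 3) that the paper leaves implicit. Your remark about the radius of a ball not being uniquely determined by the underlying set is a fair point of care, but it does not affect the argument.
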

\begin{proof}
1): This follows from part 1) of Lemma~\ref{ballbasic}.
\sn
2): Since every two balls in a nest are comparable, this follows from part 1) of our
lemma.
\sn
3): The collection of all ultrametric balls that contain some ultrametric ball 
from $\cN$ is a nest by part 2) of Lemma~\ref{ballbasic}. It is clear that it is a 
full nest of balls. The last assertion follows from the fact that $\cN\subseteq\cN'$
and every ball in $\cN'$ contains a ball from $\cN$.
\end{proof}

Pseudo Cauchy sequences give rise to nests of balls:
\begin{lemma}                                  \label{pCs->nest}
Take a pseudo Cauchy sequence $(c_\nu)_{\nu<\lambda}$ in $(K,v)$. Then 
\begin{equation}                                 \label{pCsnest}
(B_{\gamma_\nu}(c_\nu,K))_{\nu<\lambda}
\end{equation} 
is a nest of balls in $K$. The intersection over all balls in this nest is the set of
all limits of $(c_\nu)_{\nu<\lambda}$ in $K$.
\end{lemma}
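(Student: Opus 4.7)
The plan is to verify the two assertions separately, both essentially by unpacking the definitions and applying Lemma~\ref{proppCsf} and Lemma~\ref{ballbasic}.

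First, to show that (\ref{pCsnest}) is a nest of balls, I need to check that for any $\mu<\nu<\lambda$ the balls $B_{\gamma_\nu}(c_\nu,K)$ and $B_{\gamma_\mu}(c_\mu,K)$ are comparable by inclusion. Since $(\gamma_\nu)_{\nu<\lambda}$ is strictly increasing, the identity $v(c_\nu-c_\mu)=\gamma_\mu$ from (\ref{gammanuf}) in Lemma~\ref{proppCsf} tells us that $c_\nu\in B_{\gamma_\mu}(c_\mu,K)$. By part~1) of Lemma~\ref{ballbasic}, this means $B_{\gamma_\mu}(c_\mu,K)=B_{\gamma_\mu}(c_\nu,K)$. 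Since $\gamma_\mu<\gamma_\nu$, the closed ball of radius $\gamma_\nu$ around $c_\nu$ is then contained in the closed ball of radius $\gamma_\mu$ around $c_\nu$, giving $B_{\gamma_\nu}(c_\nu,K)\subseteq B_{\gamma_\mu}(c_\mu,K)$.

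Second, to identify the intersection with the set of pseudo limits, I recall that an element $x\in K$ lies in $B_{\gamma_\nu}(c_\nu,K)$ precisely when $v(x-c_\nu)\geq\gamma_\nu$. Hence $x\in\bigcap_{\nu<\lambda}B_{\gamma_\nu}(c_\nu,K)$ if and only if $v(x-c_\nu)\geq\gamma_\nu$ for every $\nu<\lambda$. But this is exactly condition (\ref{limitpCs}), which we noted in the discussion preceding the lemma is equivalent to the defining condition (\ref{ylimit=f}) for $x$ to be a pseudo limit of $(c_\nu)_{\nu<\lambda}$. So the two sets coincide.

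No step presents a real obstacle — the lemma is essentially a translation of the pseudo Cauchy condition into the language of ultrametric balls, and all the required tools have been put in place by the preceding results.
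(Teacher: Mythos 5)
Your proof is correct and follows essentially the same route as the paper: both parts rest on the identity $v(c_\nu-c_\mu)=\gamma_\mu$ from (\ref{gammanuf}) and the equivalence of (\ref{limitpCs}) with the definition of a pseudo limit. The only cosmetic difference is that you establish comparability of the balls via part 1) of Lemma~\ref{ballbasic} (recentering) rather than part 2) (nonempty intersection implies comparability), which makes the direction of the inclusion explicit.
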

\begin{proof}
Assume that $\mu<\nu<\lambda$. Then by (\ref{gammanuf}), 
$v(c_\nu-c_\mu)=\gamma_\mu\,$, that is, $c_\nu\in B_{\gamma_\mu}(c_\mu,K)$. By part 2)
of Lemma~\ref{ballbasic}, this implies that (\ref{pCsnest}) is a nest of balls.

An element $c\in K$ is a limit of $(c_\nu)_{\nu<\lambda}$ if and only if for all 
$\nu<\lambda$ we have that $v(c-c_\nu)=\gamma_\nu\,$, or in other words,
$c\in B_{\gamma_\nu}(c_\nu,K)$. This in turn holds if and only if $c$ lies in the
intersection over all balls in the nest. This proves our second assertion.
\end{proof}

\bn
%
%
\section{Approximation types}  \label{sectat}    
%
%
%
\subsection{Definition of approximation types}  \label{sectatdef}
\mbox{ }\sn
An \bfind{approximation type} {\bf A over} $(K,v)$ is either a full nest of open and
closed balls in $(K,v)$, or the empty set. It follows that 
\[
\supp{\bf A}\>:=\> \{\gamma \mid \mbox{{\bf A} contains a closed ball of radius }
\gamma\}
\]
is a (possibly empty) initial segment of $vK\infty$, called the \bfind{support} of
{\bf A}. If $\gamma\in\supp{\bf A}$, then by part 2) of Lemma~\ref{uniqnest}, {\bf A}
contains a unique closed ball of radius $\gamma$, which we will denote by 
${\bf A}_\gamma\,$. If {\bf A} also contains an open ball of radius $\gamma$, then
that too is unique, and we will denote it by ${\bf A}_\gamma^\circ\,$. 

We may write \glossary{${\bf A}_\gamma$} ${\bf A}_\gamma=B_\gamma (c_\gamma, K)$ if 
$\gamma\in \supp{\bf A}$, and ${\bf A}_\gamma=\emptyset$ otherwise. Likewise, we may write 
\glossary{${\bf A}_\gamma^\circ$} ${\bf A}_\gamma^\circ=B_\gamma^\circ (c_\gamma,K)\,$ if 
{\bf A} contains an open ball of radius $\gamma$, and ${\bf A}_\gamma^\circ=\emptyset$
otherwise; note that if {\bf A} contains an open ball of radius $\gamma$, then by
Lemma~\ref{ballbasic} we can choose $c_\gamma$ to be any of its elements, and since the 
open ball of radius $\gamma$ is contained in the closed ball ${\bf A}_\gamma$, again by
Lemma~\ref{ballbasic} we can take the same $c_\gamma$ also as a center of  
${\bf A}_\gamma$. If $\gamma$ is not the maximal 
element of $\supp{\bf A}$, then ${\bf A}_\gamma^\circ\ne\emptyset$, but if 
$\gamma$ is the maximal element of $\supp{\bf A}$, then ${\bf A}_\gamma^\circ$ may or 
may not be empty. Further, note that for every $\gamma\in vK$ and $c\in K$, the balls 
$B_\gamma (c,K)$ and $B_\gamma^\circ (c,K)$ are nonempty, but they may not be contained in
{\bf A}; in particular, $B_\gamma (c,K)\in {\bf A}$ {\it does not} imply 
$B_\gamma^\circ (c,K)\in {\bf A}$. See Example~\ref{exreve} below.

\pars
We note the following straightforward observation:
\begin{lemma}                             \label{ateq}
Two approximation types {\bf A} and ${\bf A}'$ over $(K,v)$ are equal if and only if
$\supp{\bf A}=\supp{\bf A}'$ and for all $\gamma\in\supp{\bf A}$, ${\bf A}_\gamma=
{\bf A}'_\gamma$ and ${\bf A}_\gamma^\circ={{\bf A}'_\gamma}^\circ$.
\end{lemma}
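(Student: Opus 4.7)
The forward direction is trivial by definition: if $\mathbf{A} = \mathbf{A}'$ then they contain the same closed and open balls, hence they have the same support and the balls of each radius (closed, and open when present) coincide.

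For the reverse direction, I plan to argue mutual inclusion. First I would dispatch the degenerate case: if $\supp\mathbf{A} = \emptyset$, then $\mathbf{A}$ contains no closed ball, and because $\mathbf{A}$ is a full nest, it also contains no open ball (an open ball of radius $\gamma$ is contained in the corresponding closed ball of the same radius and center, which by fullness would have to lie in $\mathbf{A}$, contradicting $\supp\mathbf{A} = \emptyset$). Hence $\mathbf{A} = \emptyset$, and likewise $\mathbf{A}' = \emptyset$, so $\mathbf{A} = \mathbf{A}'$.

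Assuming the support is nonempty, take any ball $B \in \mathbf{A}$. By definition $B$ is either a closed ball $B_\gamma(c,K)$ or an open ball $B_\gamma^\circ(c,K)$ for some $\gamma$ and $c \in K$. In the closed case, $\gamma \in \supp\mathbf{A} = \supp\mathbf{A}'$, and by part 2) of Lemma~\ref{uniqnest} the closed ball of radius $\gamma$ appearing in each nest is unique; by hypothesis $\mathbf{A}_\gamma = \mathbf{A}'_\gamma$, so $B = \mathbf{A}_\gamma = \mathbf{A}'_\gamma \in \mathbf{A}'$. In the open case, the fullness of $\mathbf{A}$ together with the fact that $B_\gamma^\circ(c,K) \subseteq B_\gamma(c,K)$ forces $B_\gamma(c,K) \in \mathbf{A}$, hence $\gamma \in \supp\mathbf{A}$; then by hypothesis $\mathbf{A}_\gamma^\circ = {\mathbf{A}'_\gamma}^\circ$, which equals $B$ by the uniqueness statement of Lemma~\ref{uniqnest}(2), and therefore $B \in \mathbf{A}'$. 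This shows $\mathbf{A} \subseteq \mathbf{A}'$, and the reverse inclusion follows by symmetry.

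The only point that requires a moment's thought is that an open ball appearing in a full nest forces the corresponding closed ball to appear as well, so that the hypothesis on $\supp\mathbf{A}$ automatically controls open balls through the corresponding $\mathbf{A}_\gamma^\circ$ data; once this is noted, everything else is immediate from the uniqueness of closed and open balls of a given radius in a nest, as recorded in Lemma~\ref{uniqnest}(2).
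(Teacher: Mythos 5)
Your proof is correct. The paper states Lemma~\ref{ateq} without proof, calling it a straightforward observation, and your argument --- reducing everything to the uniqueness of the closed and open balls of a given radius in a nest (part 2 of Lemma~\ref{uniqnest}) together with the fullness of the nest (which, as you rightly point out, forces the closed ball of radius $\gamma$ into the nest whenever the open ball of radius $\gamma$ is present, and also disposes of the empty-support case) --- is exactly the verification the author leaves to the reader.
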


\pars
Recall that $\bigcap {\bf A}$ denotes the intersection over all balls in {\bf A}.
If ${\bf A}=\emptyset$, we set $\supp{\bf A}=\emptyset$ and $\bigcap {\bf A}=K$. In this 
case, conditions like $\alpha>\supp {\bf A}$ and $\alpha\geq\supp {\bf A}$ shall be 
understood to always be satisfied. 

When we say that $\supp{\bf A}$ has no maximal element, then we will tacitly mean that 
it is nonempty. If this is the case, then for every $\gamma\in\supp{\bf A}$
there is a larger $\beta\in\supp{\bf A}$, so ${\bf A}_\beta\ne\emptyset$.
Now part 1) of Lemma~\ref{uniqnest} proves:
\begin{lemma}                                           \label{suppnle}
If ${\bf A}\ne\emptyset$ and $\supp{\bf A}$ has no maximal element, then {\bf A} 
is uniquely determined by 
its closed balls, and it is also uniquely determined by its open balls. In this case,
\[
\bigcap {\bf A} \>=\> \bigcap_{\displaystyle\gamma\in\supp {\bf A}} {\bf A}_\gamma
\>=\> \bigcap_{\displaystyle\gamma\in \supp {\bf A}} {\bf A}_\gamma^\circ\>.
\]

\end{lemma}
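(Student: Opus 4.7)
My plan is to exploit the fact that $\supp {\bf A}$ has no maximum to set up a chain that relates each closed ball in the nest to a strictly smaller one. Concretely, for every $\gamma\in\supp {\bf A}$ pick some $\beta\in\supp {\bf A}$ with $\beta>\gamma$. By part~1 of Lemma~\ref{ballbasic}, any element of ${\bf A}_\beta$ is a center of both ${\bf A}_\beta$ and ${\bf A}_\gamma$, so with a common center we immediately get the chain
\[
{\bf A}_\beta \;\subseteq\; {\bf A}_\gamma^\circ \;\subseteq\; {\bf A}_\gamma.
\]
In particular ${\bf A}_\gamma^\circ$ is nonempty for every $\gamma\in\supp {\bf A}$.

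Next I will prove the two uniqueness claims using Lemma~\ref{ateq}. Given only the collection of closed balls, the set $\supp {\bf A}$ and each ${\bf A}_\gamma$ are evident, and the chain above exhibits ${\bf A}_\gamma^\circ$ as an open ball of radius $\gamma$ containing the closed ball ${\bf A}_\beta$; by part~1 of Lemma~\ref{uniqnest} such an open ball is uniquely determined by ${\bf A}_\beta$, so ${\bf A}_\gamma^\circ$ is recovered. For the dual statement, I first argue that the set of radii of open balls in ${\bf A}$ equals $\supp {\bf A}$: every $\gamma\in\supp {\bf A}$ gives ${\bf A}_\gamma^\circ\neq\emptyset$ by the preceding paragraph, and conversely any open ball of radius $\delta$ in ${\bf A}$ is contained in the closed ball of radius $\delta$ and the same center, which by fullness belongs to ${\bf A}$, forcing $\delta\in\supp {\bf A}$. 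Hence $\supp {\bf A}$ is recovered from the open balls, and then ${\bf A}_\gamma$ is the unique closed ball of radius $\gamma$ containing ${\bf A}_\beta^\circ$ for any $\beta>\gamma$ in $\supp {\bf A}$, again by part~1 of Lemma~\ref{uniqnest}.

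For the intersection formulas, the inclusions $\bigcap {\bf A}\subseteq \bigcap_\gamma {\bf A}_\gamma$ and $\bigcap {\bf A}\subseteq \bigcap_\gamma {\bf A}_\gamma^\circ$ are immediate since every ${\bf A}_\gamma$ and every (nonempty) ${\bf A}_\gamma^\circ$ belongs to ${\bf A}$. For the reverse inclusion starting from $\bigcap_\gamma {\bf A}_\gamma$, pick any ball $B\in {\bf A}$: if $B$ is closed, it is one of the ${\bf A}_\gamma$ and there is nothing to prove; if $B={\bf A}_\delta^\circ$ is open, then $\delta\in\supp {\bf A}$ (by the previous paragraph), so we can choose $\beta>\delta$ in $\supp {\bf A}$ and use ${\bf A}_\beta\subseteq {\bf A}_\delta^\circ=B$. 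The analogous argument starting from $\bigcap_\gamma {\bf A}_\gamma^\circ$ handles the remaining equality, using ${\bf A}_\beta^\circ\subseteq {\bf A}_\gamma$ whenever $\beta>\gamma$ in $\supp {\bf A}$.

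The proof is really just a careful bookkeeping exercise built on two ingredients: the fullness of the nest, and the fact that every point in an ultrametric ball is a center. There is no substantive obstacle; the only pitfall is remembering that the open ball ${\bf A}_\gamma^\circ$ could a priori be empty (hence the initial verification that nonemptiness is automatic under the hypothesis that $\supp {\bf A}$ has no maximum).
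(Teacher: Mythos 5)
Your proof is correct and follows essentially the same route as the paper, which simply invokes part~1) of Lemma~\ref{uniqnest} after noting that the absence of a maximal element in $\supp{\bf A}$ guarantees a nonempty ${\bf A}_\beta$ strictly inside every ball of the nest; you have merely written out the bookkeeping (including the use of fullness to get ${\bf A}_\gamma^\circ\ne\emptyset$) that the paper leaves implicit.
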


\parm
Take any extension $(L|K,v)$ and $x\in L$. For all
$\gamma\in vK\infty$, we set
\begin{equation}                            \label{atalpha}
\appr_v (x,K)_{\gamma} \>:=\> \{c\in K\mid v(x-c)\geq\gamma\}
\>=\>B_\gamma(x,L)\cap K\>,
\end{equation}
and for $\gamma\in vK$, we set
\begin{equation}                            \label{atalphacirc}
\appr_v(x,K)_{\gamma}^\circ\>:=\> \{c\in K\mid v(x-c)>\gamma\}
\>=\>B_\gamma^\circ(x,L)\cap K\>.
\end{equation}
Further, we set
\begin{eqnarray*}
\appr_v(x,K) &:=& \{\appr_v(x,K)_{\gamma}\mid \gamma\in vK\infty
\mbox{ and } \appr_v(x,K)_{\gamma}\ne\emptyset\}\\
&& \cup \,\{\appr_v(x,K)_{\gamma}^\circ\mid \gamma\in vK
\mbox{ and } \appr_v(x,K)_{\gamma}^\circ\ne\emptyset\}\>.
\end{eqnarray*}
Note that $\appr_v(x,K)=\emptyset$ if and only $vx<vK$.

\begin{remark}
As the right hand sides of (\ref{atalpha}) and (\ref{atalphacirc}) show, subtraction is 
not needed to define the approximation type of an element. Therefore, these approximation
types can already be defined in ultrametric spaces without any further algebraic 
structure and can be used to study extensions of ultrametric spaces and other structures 
with underlying ultrametric spaces. 
\end{remark}


%
\begin{lemma}
For each $\gamma\in vK\infty$, $B_\gamma(x,L)\cap K$ is a closed 
ultrametric ball and $B_\gamma^\circ(x,L)\cap K$ is an open ultrametric ball in
$(K,v)$, if nonempty. The collection $\appr_v (x,K)$ is an approximation type over 
$(K,v)$. 
\end{lemma}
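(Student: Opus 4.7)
The proof has two parts, and I would address them in order.

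For the first assertion, I would fix some $c$ in the nonempty intersection $B_\gamma(x,L)\cap K$, so that $v(x-c)\geq\gamma$. For any $a\in K$, a direct application of the strong triangle inequality to $x-a=(x-c)+(c-a)$ and to $c-a=(c-x)+(x-a)$ shows that $v(x-a)\geq\gamma$ is equivalent to $v(a-c)\geq\gamma$. Hence $B_\gamma(x,L)\cap K = B_\gamma(c,K)$, a closed ultrametric ball in $(K,v)$; the open case is strictly parallel.

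For the second assertion, I first observe that $\appr_v(x,K)$ is a nest: the $L$-balls $B_\gamma(x,L)$ and $B_\gamma^\circ(x,L)$ all contain the common point $x$, so by Lemma~\ref{ballbasic}(2) applied in $L$ any two of them are comparable by inclusion, and intersection with $K$ preserves inclusions.

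The technical heart of the proof is fullness. Suppose $B'$ is a $K$-ball containing some $B\in\appr_v(x,K)$, and pick $a\in B\subseteq B'$. By Lemma~\ref{ballbasic}(1) and the first part of the lemma, $a$ is a center of both, so $B' = B_\delta(a,K)$ (say, closed) and $B$ equals $B_\gamma(a,K)$ or $B_\gamma^\circ(a,K)$ according as it comes from a closed or open $L$-ball of radius $\gamma$. Set $\alpha:=v(x-a)$. In the clean case $\alpha\geq\delta$ one has $a\in B_\delta(x,L)\cap K$, and the first part of the lemma identifies this nonempty intersection with $B_\delta(a,K)=B'$, so $B'\in\appr_v(x,K)$. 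In the remaining case $\alpha<\delta$: if $B$ is closed, the inclusion $B_\gamma(a,K)\subseteq B_\delta(a,K)$ forces $\gamma\geq\delta$, while $a\in B$ forces $\alpha\geq\gamma$, contradicting $\alpha<\delta$; if $B$ is open, the inclusion $B_\gamma^\circ(a,K)\subseteq B_\delta(a,K)$ forces the value-group gap $vK\cap(\gamma,\delta)=\emptyset$, which collapses both descriptions to the same subset of $K$, giving $B'=B\in\appr_v(x,K)$. For $B'$ open, an analogous analysis shows the non-clean case is strictly contradictory.

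The main obstacle I expect is precisely this second subcase: when $\alpha<\delta$ and $B$ is open, $B'=B_\delta(a,K)$ need not equal any intersection $B_\delta(x,L)\cap K$ (which in fact may be empty; by Lemma~\ref{maxapp}, this occurs exactly when $\alpha\notin vK$ and $\alpha=\max v(x-K)$). The resolution is to recognize that $B'$ nonetheless appears in $\appr_v(x,K)$ via the open description $B_\gamma^\circ(x,L)\cap K$, the collapse being produced by the $vK$-gap straddling $\alpha$.
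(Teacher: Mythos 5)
Your proof is correct and, for the first assertion, identical to the paper's: pick $c$ in the nonempty intersection and use the ultrametric triangle law to identify it with $B_\gamma(c,K)$. For the second assertion the paper is far terser---it merely records that nonemptiness of $B_\gamma^\circ(x,L)\cap K$ implies that of $B_\gamma(x,L)\cap K$ and that nonemptiness propagates downward in the radius---whereas you carry out the full verification that the nest is full, including the subtle subcase in which a value-group gap straddling $v(x-a)$ makes a closed $K$-ball of nominally larger radius coincide, as a set, with an open member of the collection. Since balls are defined as sets, that coincidence does place $B'$ in $\appr_v(x,K)$, and your case analysis is complete and correct.
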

\begin{proof}
Assume that $c\in B_\gamma(x,L)\cap K$. Then $v(x-c)\geq\gamma$ and by the 
ultrametric triangle law, for every $d\in K$ we have
\[
d\in B_\gamma(x,L)\>\Leftrightarrow\>v(x-d)\geq\gamma \>\Leftrightarrow\>
v(c-d)\geq\gamma \>\Leftrightarrow\> d\in B_\gamma(c,K)\>,
\]
whence $B_\gamma(x,L)\cap K=B_\gamma(c,K)$.
A similar argument yields that if $c\in B_\gamma^\circ(x,L)\cap K$, then
$B_\gamma^\circ(x,L)\cap K=B_\gamma^\circ(c,K)$.

Our assertion that $\appr_v(x,K)$ is an approximation type follows from the facts that 
if $B_\gamma^\circ(x,L)\cap K\ne\emptyset$, then $B_\gamma(x,L)\cap K\ne\emptyset$,
and if $\gamma>\beta\in vK$ and $B_\gamma(x,L)\cap K\ne\emptyset$, then
$B_\beta(x,L)\cap K\ne\emptyset$.
\end{proof}
\n
We call $\appr_v(x,K)$ the \bfind{approximation type of $x$ over $(K,v)$}.
For the sake of completeness, we state the following criteria for the equality of 
approximation types.
\begin{lemma}                                
Take an extension $(L|K,v)$ and $x,x'\in L$. Then
\sn
$\appr_v(x,K)=\appr_v(x',K) \,\Rightarrow\, v(x-x')\geq
\supp\,\appr_v(x,K)=\supp\,\appr_v(x',K)\>.$
\sn
Conversely, if 
$v(x-x')>\supp\,\appr_v(x,K)\cup\supp\,\appr_v(x',K)$ or if 
$v(x-x')\in vK$ and $v(x-x')>\supp\,\appr_v(x,K)$, then 
$\appr_v(x,K)=\appr_v(x',K)$.
\end{lemma}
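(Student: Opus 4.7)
The plan is to handle the two directions separately, using only ultrametric triangle manipulations and the definitions from Subsections on balls and approximation types.

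For the forward implication, note that equality of the two approximation types immediately implies equality of their supports. To see the valuation inequality, fix any $\gamma\in\supp\,\appr_v(x,K)$ and pick a witness $c\in K$ with $v(x-c)\geq\gamma$, i.e., $c\in B_\gamma(x,L)\cap K$. By assumption this ball coincides with $B_\gamma(x',L)\cap K$, so $v(x'-c)\geq\gamma$ as well. The ultrametric triangle law then yields $v(x-x')\geq\min\{v(x-c),v(x'-c)\}\geq\gamma$, and since $\gamma$ was arbitrary in the support, $v(x-x')\geq\supp\,\appr_v(x,K)$.

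For the converse, the key observation used throughout is that if $\gamma < v(x-x')$, then for every $d\in K$ one has $v(x-d)\geq\gamma\Leftrightarrow v(x'-d)\geq\gamma$ and $v(x-d)>\gamma\Leftrightarrow v(x'-d)>\gamma$ (both by the ultrametric triangle law). Hence whenever $\gamma$ satisfies $\gamma<v(x-x')$, the closed balls $B_\gamma(x,L)\cap K$ and $B_\gamma(x',L)\cap K$ coincide, and likewise the open ones when $\gamma\in vK$. In the first case of the hypothesis, every $\gamma$ appearing as the radius of a ball in either approximation type lies in $\supp\,\appr_v(x,K)\cup\supp\,\appr_v(x',K)$, hence is strictly less than $v(x-x')$ by assumption, so the observation applies uniformly and gives both equality of supports and of each matching pair of balls.

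The second case, with $\beta:=v(x-x')\in vK$ and $\beta>\supp\,\appr_v(x,K)$, is where the only real subtlety appears: the hypothesis is asymmetric, and one must first show $\supp\,\appr_v(x',K)\subseteq\supp\,\appr_v(x,K)$. I will take $\gamma\in\supp\,\appr_v(x',K)$ with witness $c\in K$ satisfying $v(x'-c)\geq\gamma$, and distinguish cases. If $\gamma>\beta$, then $v(x-x')<v(x'-c)$ forces $v(x-c)=\beta$, which would place $\beta$ in $\supp\,\appr_v(x,K)$ and contradict $\beta>\supp\,\appr_v(x,K)$. So $\gamma\leq\beta$, and then $v(x-c)\geq\min\{\beta,\gamma\}=\gamma$, showing $\gamma\in\supp\,\appr_v(x,K)$ and in fact $\gamma<\beta$ by the strictness of the hypothesis. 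Once the supports are shown equal by the easier reverse inclusion (same computation), every $\gamma$ in the common support satisfies $\gamma<\beta=v(x-x')$, so the uniform observation of the previous paragraph again matches closed and open balls.

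The main (and really the only) obstacle is the asymmetric case: ruling out $\gamma\geq\beta$ for $\gamma\in\supp\,\appr_v(x',K)$ via the distinct-values argument that produces $v(x-c)=\beta$. Everything else is routine ultrametric bookkeeping together with Lemma~\ref{ateq} to conclude equality of approximation types from equality of supports and of the closed and open balls at each radius.
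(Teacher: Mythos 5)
Your proof is correct and follows essentially the same route as the paper's: ultrametric triangle bookkeeping to compare the balls $B_\gamma(x,L)\cap K$ and $B_\gamma(x',L)\cap K$ whenever $\gamma<v(x-x')$, plus Lemma~\ref{ateq} to conclude. The only cosmetic difference is in the asymmetric case, where you rule out $\gamma\geq v(x-x')$ for $\gamma\in\supp\,\appr_v(x',K)$ by a direct case split, while the paper argues by contradiction with $\delta=\min\{v(x-x'),v(x'-c)\}$; both amount to the same two-line computation.
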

\begin{proof}
Assume that $\appr_v(x,K)=\appr_v(x',K)$. Then 
\[
\supp\,\appr_v(x,K)\>=\>\supp\,\appr_v(x',K)
\]
and for every $\gamma\in \supp\,\appr_v(x,K)$ we have that  
$\appr_v(x,K)_\gamma=\appr_v(x',K)_\gamma$. The latter implies that for 
$c\in\appr_v(x,K)_\gamma$, $v(x-c)\geq\gamma$ and $v(x'-c)\geq\gamma$, whence
$v(x-x')\geq\gamma$. 

\pars
Now assume that $v(x-x')>\supp\,\appr_v(x,K)\cup\supp\,\appr_v(x',K)$. Take 
$\gamma\in \supp\,\appr_v(x,K)$. For every $c\in\appr_v(x,K)_\gamma$ we have
that $v(x-c)\geq\gamma$ and since $v(x-x')>\gamma$, we find that $v(x'-c)\geq\gamma$.
This shows that $\appr_v(x,K)_\gamma\subseteq\appr_v(x',K)_\gamma$ and $\gamma
\in\supp\,\appr_v(x',K)$. In particular, 
\begin{equation}                    \label{ssubs}
\supp\,\appr_v(x,K)\>\subseteq\>\supp\,\appr_v(x',K)\>.
\end{equation}
By symmetry, we can interchange $x$ and $x'$, so we find that 
$\supp\,\appr_v(x,K)=\supp\,\appr_v(x',K)$ and $\appr_v(x,K)_\gamma=
\appr_v(x',K)_\gamma$ for every $\gamma\in\supp\,\appr_v(x,K)$.
For each such $\gamma$ and every $c\in\appr_v(x,K)_\gamma^\circ$ we have
that $v(x-c)>\gamma$, and since $v(x-x')>\gamma$, it follows that $v(x'-c)>\gamma$.
This shows that $\appr_v(x,K)_\gamma^\circ\subseteq\appr_v(x',K)_\gamma^\circ$, and 
by symmetry, we obtain equality. From Lemma~\ref{ateq} we now obtain that $\appr_v(x,K)=
\appr_v(x',K)$.

\pars
Finally, assume that $v(x-x')\in vK$ and $v(x-x')>\supp\,\appr_v(x,K)$. As above, we show 
that (\ref{ssubs}) holds. Suppose that the reverse inclusion were not true. Then there is 
some $\gamma\in \supp\,\appr_v(x',K)$ such that $\gamma>\supp\,\appr_v(x,K)$. We pick some 
$c\in K$ such that $v(x'-c)=\gamma$. Now $v(x-c)\geq\min\{v(x-x'),v(x'-c)\}=:\delta$. By 
our assumption, $\delta\in vK$ with $\delta>\supp\,\appr_v(x,K)$. However, $v(x-c)\geq
\delta$ implies that $c\in\appr_v(x,K)_\delta\,$, a contradiction. Hence the supports of 
the two approximation types are equal.

For every $\gamma\in \supp\,\appr_v(x,K)$ we have that $v(x-x')>\gamma$ and therefore, 
$v(x-c)\geq\gamma\Leftrightarrow v(x'-c)\geq\gamma$ and $v(x-c)>\gamma\Leftrightarrow 
v(x'-c)>\gamma$, showing that $\appr_v(x,K)_\gamma=\appr_v(x',K)_\gamma$ and 
$\appr_v(x,K)_\gamma^\circ=\appr_v(x',K)_\gamma^\circ$. Again from Lemma~\ref{ateq} we  
obtain that $\appr_v(x,K)=\appr_v(x',K)$.
\end{proof}

The next lemma demonstrates the connection between the supports of approximation types 
and the sets $v(x-K)$.
\begin{lemma}                                \label{suppvx-K}
Take an extension $(L|K,v)$ and $x\in L$. Then
\begin{equation}
\supp\,\appr_v(x,K)\>=\>v(x-K)\cap vK\>.
\end{equation}
\end{lemma}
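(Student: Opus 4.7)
The plan is to prove the equality by a direct double inclusion, with the key algebraic observation being that for $\gamma \in vK$, one can always shift an approximating element by a constant of value $\gamma$ to convert the inequality $v(x-c)\geq\gamma$ into an equality $v(x-c')=\gamma$. This is what links the two notions: membership in the support only requires the inequality, while membership in $v(x-K)$ requires exact attainment.

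For the inclusion $v(x-K)\cap vK\subseteq \supp\,\appr_v(x,K)$, I would take $\gamma\in v(x-K)\cap vK$ and write $\gamma=v(x-c)$ for some $c\in K$. Then trivially $v(x-c)\geq\gamma$, so $c\in\appr_v(x,K)_\gamma$, which shows $\appr_v(x,K)_\gamma\neq\emptyset$, i.e., $\gamma\in\supp\,\appr_v(x,K)$. This is immediate from the definitions in (\ref{atalpha}).

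For the reverse inclusion $\supp\,\appr_v(x,K)\subseteq v(x-K)\cap vK$, I would take $\gamma\in\supp\,\appr_v(x,K)$, so that $\gamma\in vK\infty$ and there exists $c\in K$ with $v(x-c)\geq\gamma$. The goal is to produce $c'\in K$ with $v(x-c')=\gamma$ and to verify $\gamma\in vK$. Assuming $x\notin K$ (the interesting case; if $x\in K$ the identification of $v(x-K)\cap vK$ with $\supp\,\appr_v(x,K)$ is understood up to the obvious adjustment concerning $\infty$), we have $v(x-c)<\infty$, so $\gamma<\infty$, i.e., $\gamma\in vK$. If $v(x-c)=\gamma$, take $c'=c$. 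Otherwise $v(x-c)>\gamma$, and using $\gamma\in vK$, pick $d\in K$ with $vd=\gamma$; then $c':=c+d\in K$ and, by the ultrametric law together with $vd<v(x-c)$, we have $v(x-c')=\min\{v(x-c),vd\}=\gamma$. Hence $\gamma\in v(x-K)\cap vK$.

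The only real subtlety is the $\gamma=\infty$ edge case, which amounts to whether $x$ is allowed to lie in $K$; otherwise, the argument is purely definitional plus the shift trick. I would therefore expect the published proof to simply invoke these two short verifications, perhaps citing part~1) of Lemma~\ref{maxapp} (which already organizes the values in $v(x-K)\cap vK$ as an initial segment of $vK$) to emphasize coherence with the support structure of approximation types.
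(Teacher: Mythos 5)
Your proof is correct and takes essentially the same route as the paper's: the published proof handles your ``shift trick'' by invoking part 1) of Lemma~\ref{maxapp} (whose proof is exactly that trick), as you anticipated, and the reverse inclusion is identical. Your remark about the $\gamma=\infty$ case when $x\in K$ flags an edge case that the paper's own proof silently glosses over.
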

\begin{proof}
Take $\gamma\in\supp\,\appr_v(x,K)$, hence $\appr_v(x,K)_\gamma\ne\emptyset$. Pick 
some element $c\in \appr_v(x,K)_\gamma\,$. Then $v(x-c)\geq\gamma$. Now part 1) of 
Lemma~\ref{maxapp} shows that $\gamma\in v(x-K)\cap vK$. 

For the converse, take $\gamma\in v(x-K)\cap vK$ and choose $c\in K$ such that $\gamma
=v(x-c)$. Then $c\in\appr_v(x,K)_\gamma$ and therefore, $\gamma\in\supp\,\appr_v(x,K)$.
\end{proof}

\pars
If {\bf A} is an approximation type over $(K,v)$ and there exists an
element $x$ in some valued extension field $(L,v)$ such that ${\bf A} =
\appr_v(x,K)$, then we say that \bfind{$x$ realizes {\bf A}} (in $(L,v)$).
If {\bf A} is realized by some $c\in K$, then {\bf A} will be called
{\bf trivial}.\index{trivial approximation type} This holds if and only
if ${\bf A}_\infty\ne \emptyset$ (i.e., $\infty\in\supp {\bf A}$), 
in which case ${\bf A}_\infty=\{c\}$.
As ${\bf A}_\infty$ can contain at most one element, a trivial
approximation type over $(K,v)$ can be realized by only one element in $K$.

\mn
%
%
%
\subsection{Immediate approximation types}     \label{sectiat}
\mbox{ }\sn
Take an approximation type {\bf A} over $(K,v)$. Then {\bf A} will be called
{\bf immediate} \index{immediate approximation type} if it is nonempty and
\[
\bigcap {\bf A} \>=\>\emptyset\>.
\]
If {\bf A} is immediate, then ${\bf A}\ne\emptyset$ and $\supp {\bf A}$ cannot have 
a maximal element, hence 
Lemma~\ref{suppnle} shows that {\bf A} is uniquely determined by its nonempty closed 
ultrametric balls. In particular, an immediate approximation type cannot be trivial.

\pars
To simplify notation, we can represent immediate approximation types as
\[
{\bf A}\>=\> \{{\bf A}_\gamma\mid\gamma\in\supp {\bf A}\}\>,
\]
and if ${\bf A}=\appr_v(x,K)$, then we can write
\begin{equation}
\appr_v(x,K)\>:=\>\{\appr_v(x,K)_{\gamma}\mid \gamma\in vK
\mbox{ and } \appr_v(x,K)_{\gamma}\ne\emptyset\}\>.
\end{equation}

\pars
\begin{lemma}                               \label{imme}
Let $(L|K,v)$ be an extension of valued fields.
\sn
1) If $x\in K$, then $\appr_v(x,K)$ is trivial, hence not immediate.
\sn
2) If $x\in L\setminus K$, then $\appr_v(x,K)$ is immediate if and only if 
the set $v(x-K)$ has no maximal element.
\sn
3) If $\appr_v(x,K)$ is immediate, then its support is equal to $v(x-K)$.
\sn
4) The extension $(L|K,v)$ is immediate if and only if $\appr_v(x,K)$ is
immediate for every $x\in L\setminus K$.
\end{lemma}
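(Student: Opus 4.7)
The plan is to prove the four parts in order, with part 1) an immediate consequence of the definitions, parts 2) and 3) forming the core of the argument, and part 4) being the translation of Lemma~\ref{va-K} through the equivalence established in part 2). For part 1), note that $v(x-x)=\infty$, so $x\in\appr_v(x,K)_\infty$ and hence $\infty\in\supp\appr_v(x,K)$; the type is thus trivial, and $x\in\bigcap\appr_v(x,K)\ne\emptyset$ shows it is not immediate.

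For part 2) I plan to argue both directions by contrapositive. Assume first that $v(x-K)$ has no maximum. Then $\appr_v(x,K)$ is nonempty: for any $c\in K$ there is $c'\in K$ with $v(x-c')>v(x-c)$, whence $v(x-c)=v(c-c')\in vK$ and $c\in\appr_v(x,K)_{v(x-c)}$. Moreover, if some $c$ were in $\bigcap\appr_v(x,K)$, picking $c'$ with $v(x-c')>v(x-c)=:\gamma_0$ as above places $\gamma_0$ in $vK$, so $\appr_v(x,K)_{\gamma_0}^\circ$ is a nonempty open ball (it contains $c'$) lying in the type, while $v(x-c)=\gamma_0$ excludes $c$ from it, a contradiction. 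Hence $\appr_v(x,K)$ is immediate. Conversely, assume $\alpha:=\max v(x-K)$ exists. Since $x\in L\setminus K$, $\alpha\ne\infty$, and I claim that any $c_0\in K$ with $v(x-c_0)=\alpha$ lies in every ball of $\appr_v(x,K)$: for a nonempty closed ball $\appr_v(x,K)_\gamma$ any witness $c$ in it gives $\gamma\le v(x-c)\le\alpha$, so $v(x-c_0)=\alpha\ge\gamma$; and for a nonempty open ball $\appr_v(x,K)_\gamma^\circ$ a witness gives $\gamma<v(x-c)\le\alpha$, so $v(x-c_0)=\alpha>\gamma$. Thus $c_0\in\bigcap\appr_v(x,K)$ and the type is not immediate.

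For part 3), Lemma~\ref{suppvx-K} already gives $\supp\appr_v(x,K)=v(x-K)\cap vK$, and Lemma~\ref{maxapp} parts 2) and 3) say that any element of $v(x-K)\setminus vK$ must be the maximum of $v(x-K)$. An immediate $\appr_v(x,K)$ forces $x\in L\setminus K$ by part 1), and then by part 2) no such maximum exists, so $v(x-K)\setminus vK=\emptyset$ and $\supp\appr_v(x,K)=v(x-K)$. For part 4), the forward direction uses that $(K(x)|K,v)$ inherits immediateness from $(L|K,v)$ for each $x\in L\setminus K$, so Lemma~\ref{va-K} part 2) rules out a maximum of $v(x-K)$ and part 2) of the present lemma then makes $\appr_v(x,K)$ immediate; conversely, if each $\appr_v(x,K)$ with $x\in L\setminus K$ is immediate, then part 2) produces the hypothesis of Lemma~\ref{va-K} part 3). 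The main obstacle is the forward direction of part 2), where the uniform verification that the candidate $c_0$ belongs to every ball of $\appr_v(x,K)$ requires separating open and closed balls and reading off the right inequality $\gamma\le\alpha$ respectively $\gamma<\alpha$ from a single nonempty witness.
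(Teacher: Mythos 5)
Your proof is correct and follows essentially the same route as the paper: part 1) via $\appr_v(x,K)_\infty=\{x\}$, part 2) by analyzing when $\bigcap\appr_v(x,K)$ is empty (you phrase one direction contrapositively and use an open ball where the paper uses a closed one, but the substance is identical), and parts 3) and 4) by combining part 2) with Lemma~\ref{suppvx-K}, Lemma~\ref{maxapp} and Lemma~\ref{va-K}. No gaps.
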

\begin{proof}
1): If $x\in K$, then 
\[
\bigcap_{\displaystyle\gamma\in \mbox{supp}\,\appr_v(x,K)} \appr_v(x,K)_\gamma\>=\>
\appr_v(x,K)_\infty\>=\>\{x\}\>\ne\>\emptyset\>.
\]
\n
2): Assume that $\appr_v(x,K)$ is immediate and that $c$ is an
arbitrary element of $K$. Then by definition there is some
$\gamma$ such that $c\notin \appr_v(x,K)_\gamma\ne\emptyset$, so
$v(x-c)<\gamma$. Choosing some $c'\in \appr_v(x,K)_\gamma\,$, we obtain
that $v(x-c)< \gamma\leq v(x-c')$. This shows that $v(x-K)$ has no maximal element.

Now assume that $v(x-K)$ has no maximal element. Take any $c\in K$; we have to 
show that there is some $\gamma\in \supp\,\appr_v(x,K)$ such that
$c\notin\appr_v(x,K)_\gamma\,$. By assumption, there are $c',c''\in K$ such that
\[
v(x-c'')\> >\>v(x-c')\> >\>v(x-c)\>.
\]
By the ultrametric triangle law we obtain that $v(x-c)< v(x-c')= v(c''-c')$. Hence 
$\appr_v(x,K)_{v(c''-c')}$ is nonempty, but does not contain $c$. 

\sn
3): This follows from Lemma~\ref{suppvx-K} together with part 2) of our lemma and 
part 1) of Lemma~\ref{va-K}.

\sn
4): This follows from part 2) of our lemma together with parts 2) and 3) of
Lemma~\ref{va-K}.
\end{proof}

The following result is a direct consequence of part 2) of the previous lemma 
together with part 5) of Lemma~\ref{va-K} and Lemma~\ref{ateq}.

\begin{corollary}                                \label{v>L-imm}
Take an extension $(L|K,v)$ and
$x,x'\in L$. If \ $\appr_v(x,K)$ is immediate, then
\begin{equation}                            \label{v>L-immeq}
\appr_v(x,K)=\appr_v(x',K) \>\Longleftrightarrow\> v(x-x')\geq\supp\,\appr_v(x,K)\>.
\end{equation}
\qed
\end{corollary}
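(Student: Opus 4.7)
The plan is to use the three tools identified in the hint and translate the hypothesis into convenient reformulations. Since $\appr_v(x,K)$ is immediate, part 2) of Lemma~\ref{imme} says $v(x-K)$ has no maximal element, and part 3) identifies $\supp\appr_v(x,K)$ with $v(x-K)$. So the condition $v(x-x')\geq\supp\appr_v(x,K)$ reads exactly as $v(x-x')\geq v(x-K)$, which is condition b) in part 5) of Lemma~\ref{va-K}; since $v(x-K)$ has no maximum, that lemma makes conditions a), b), c) equivalent, and the key one to use is c): $v(x-c)=v(x'-c)$ for all $c\in K$.

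For the forward implication, I would invoke the unnamed lemma immediately preceding Lemma~\ref{suppvx-K}, which already records that $\appr_v(x,K)=\appr_v(x',K)$ forces $v(x-x')\geq\supp\appr_v(x,K)$, without any immediacy hypothesis. So this direction is free.

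For the reverse implication, assuming $v(x-x')\geq v(x-K)$, I would apply part 5) of Lemma~\ref{va-K} to obtain $v(x-c)=v(x'-c)$ for every $c\in K$. Since each set $\appr_v(x,K)_\gamma$ and $\appr_v(x,K)_\gamma^\circ$ is defined purely by an inequality on $v(x-c)$, the pointwise equality of valuations gives $\appr_v(x,K)_\gamma=\appr_v(x',K)_\gamma$ and $\appr_v(x,K)_\gamma^\circ=\appr_v(x',K)_\gamma^\circ$ for every $\gamma\in vK\infty$. In particular the two approximation types have the same support, and Lemma~\ref{ateq} then concludes $\appr_v(x,K)=\appr_v(x',K)$.

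There is no real obstacle; the entire content of the corollary is bookkeeping, assembling three previously proved results. The only thing to be careful about is not to drop the equality of open balls when matching closed balls (Lemma~\ref{ateq} requires both), but since both types of membership are characterized by inequalities involving $v(x-c)$ or $v(x'-c)$, the condition $v(x-c)=v(x'-c)$ handles them simultaneously.
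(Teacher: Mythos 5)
Your proof is correct and follows exactly the route the paper intends: the paper gives no written-out argument but declares the corollary a direct consequence of part 2) of Lemma~\ref{imme}, part 5) of Lemma~\ref{va-K}, and Lemma~\ref{ateq}, which are precisely the tools you assemble (with the unnamed lemma before Lemma~\ref{suppvx-K} supplying the forward direction, as is natural). No gaps; the bookkeeping with open and closed balls is handled correctly.
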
 

For our work with immediate approximation types, we introduce the
following useful notation. Take an immediate approximation type {\bf A}
over $(K,v)$, and some formula $\varphi$ in one free
variable. Then the sentence
\[
       \varphi(c) \mbox{ for } c\nearrow {\bf A}
\]
will denote the assertion
\[
\mbox{there is }\gamma\in vK \mbox{ such that }{\bf A}_{\gamma}\ne
\emptyset \mbox{ and } \varphi(c) \mbox{ holds for all }
c\in{\bf A}_{\gamma}\>.
\]
Note that if $\varphi_1(c)$ for $c\nearrow {\bf A}$ and $\varphi_2(c)$
for $c\nearrow {\bf A}$, then also $\varphi_1(c)\wedge \varphi_2(c)$ for
$c\nearrow {\bf A}$.
In the case of ${\bf A}=\appr_v(x,K)$, we will also write ``$c\nearrow
x$'' in place of ``$c\nearrow {\bf A}$''.

\pars
If $\gamma=\gamma(c)\in vK$ is a value that depends on $c\in K$ (e.g.,
the value $vf(c)$ for a polynomial $f\in K[X]$), then we will say
that \bfind{$\gamma$ increases for $c\nearrow x$} if there exists some
$\delta$ in the support of $\appr_v(x,K)$ such that for every
choice of $c'\in \appr_v(x,K)_{\delta}$,
\[
\gamma(c)>\gamma(c') \mbox{\ \ for\ \ } c\nearrow x\>.
\]
Similarly, we will say
that \bfind{$\gamma$ is fixed for $c\nearrow x$} if there exists some
$\delta$ in the support of $\appr_v(x,K)$ such that $\gamma$ is constant on 
$\appr_v(x,K)_\delta\,$.

\begin{lemma}                               \label{x:realiat} 
Take an immediate approximation type {\bf A} over $(K,v)$, an extension 
$(L|K,v)$, and an element $x\in L$. Then the following assertions are equivalent:
\sn
a) $x$ realizes {\bf A};
\sn
b) there is a cofinal subset $S\subseteq\supp{\bf A}$ such that for every 
$\gamma\in S$, $v(x-c)\geq \gamma$ for some $c\in {\bf A}_\gamma$;
\sn
c) for every $\gamma\in\supp{\bf A}$, $v(x-c)\geq \gamma$ for some $c\in
{\bf A}_\gamma$;
\sn
d) $v(x-c)$ is not fixed for $c\nearrow{\bf A}$.
\end{lemma}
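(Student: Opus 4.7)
My plan is to establish the four-way equivalence via the chain a) $\Leftrightarrow$ c), c) $\Leftrightarrow$ b), and c) $\Leftrightarrow$ d), with c) playing the role of a hub. The easy implications come first: c) $\Rightarrow$ b) is trivial (take $S = \supp \mathbf{A}$), and a) $\Rightarrow$ c) is immediate from the definition $\appr_v(x,K)_\gamma = \{c \in K : v(x-c) \geq \gamma\}$. For b) $\Rightarrow$ c), given $\gamma \in \supp \mathbf{A}$, I will use cofinality of $S$ to find $\gamma' \in S$ with $\gamma' \geq \gamma$, and then exploit the nesting $\mathbf{A}_{\gamma'} \subseteq \mathbf{A}_\gamma$ to transfer the witness from $\mathbf{A}_{\gamma'}$ to $\mathbf{A}_\gamma$.

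For the main direction c) $\Rightarrow$ a), I will first show that $\mathbf{A}_\gamma = \appr_v(x,K)_\gamma$ for every $\gamma \in \supp \mathbf{A}$: a witness $c \in \mathbf{A}_\gamma$ with $v(x-c) \geq \gamma$ belongs to both balls, so by Lemma~\ref{ballbasic} each equals $B_\gamma(c,K)$. This yields $\supp \mathbf{A} \subseteq \supp \appr_v(x,K)$ with matching closed balls. The reverse inclusion is proved by contradiction: any $\gamma' \in \supp \appr_v(x,K) \setminus \supp \mathbf{A}$ satisfies $\gamma' > \supp \mathbf{A}$ (since $\supp \mathbf{A}$ is an initial segment), so a $c'$ with $v(x-c') \geq \gamma'$ lies in every $\mathbf{A}_\delta$ with $\delta \in \supp \mathbf{A}$, contradicting $\bigcap \mathbf{A} = \emptyset$. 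Since $\supp \mathbf{A}$ has no maximal element by immediacy, Lemma~\ref{suppnle} then forces $\appr_v(x,K) = \mathbf{A}$.

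The equivalence c) $\Leftrightarrow$ d) I will prove by contraposition in both directions. If c) fails at some $\gamma$, so $v(x-c) < \gamma$ for every $c \in \mathbf{A}_\gamma$, then fixing $c_0 \in \mathbf{A}_\gamma$ and noting $v(c_0-c) \geq \gamma > v(x-c_0)$ gives $v(x-c) = v(x-c_0)$ for every $c \in \mathbf{A}_\gamma$, so $v(x-c)$ is fixed. Conversely, assume $v(x-c) \equiv \alpha$ on some $\mathbf{A}_\delta$. The expected case is that some $\gamma \in \supp \mathbf{A}$ exceeds $\alpha$: then $\mathbf{A}_\gamma \subseteq \mathbf{A}_\delta$ still has $v(x-c) = \alpha < \gamma$ throughout, so c) fails at $\gamma$. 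The subtle case, and the one I expect to require the most care, is when $\alpha$ is an upper bound of $\supp \mathbf{A}$ (including $\alpha = \infty$): here I will show it is incompatible with immediacy by picking any $c_0 \in \mathbf{A}_\delta$ and, for each $\gamma \in \supp \mathbf{A}$ with $\gamma \geq \delta$, choosing $c_\gamma \in \mathbf{A}_\gamma \subseteq \mathbf{A}_\delta$ so that $v(c_0 - c_\gamma) \geq \min\{v(c_0-x), v(x-c_\gamma)\} = \alpha \geq \gamma$, placing $c_0 \in \mathbf{A}_\gamma$. Combined with $c_0 \in \mathbf{A}_\delta \subseteq \mathbf{A}_\gamma$ for $\gamma \leq \delta$, this yields $c_0 \in \bigcap \mathbf{A} = \emptyset$, a contradiction ruling the case out entirely.
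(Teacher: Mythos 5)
Your proof is correct, but it is organized differently from the paper's: the paper establishes the single cycle a)$\Rightarrow$d)$\Rightarrow$c)$\Rightarrow$b)$\Rightarrow$a), whereas you use c) as a hub and prove each of a), b), d) equivalent to it. The substantive steps largely coincide: your c)$\Rightarrow$a) is the paper's b)$\Rightarrow$a) specialized to $S=\supp{\bf A}$ (identifying the closed balls via the ultrametric law, excluding any $\gamma>\supp{\bf A}$ from $\supp\,\appr_v(x,K)$ by means of $\bigcap{\bf A}=\emptyset$, and finishing with Lemma~\ref{suppnle}), and your ``not c) $\Rightarrow$ not d)'' is the contrapositive of the paper's d)$\Rightarrow$c). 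The genuine difference is in how the implication toward d) is obtained: the paper deduces a)$\Rightarrow$d) from part 2) of Lemma~\ref{imme} (the set $v(x-K)$ has no maximal element when $\appr_v(x,K)$ is immediate), while you prove ``not d) $\Rightarrow$ not c)'' directly by a two-case analysis on the fixed value $\alpha$, ruling out the case $\alpha\geq\supp{\bf A}$ by exhibiting an element of $\bigcap{\bf A}=\emptyset$. This keeps the equivalence c)$\Leftrightarrow$d) self-contained (it does not route through a)) at the cost of the extra case. One small adjustment is needed in your ``expected case'': choose $\gamma\geq\delta$ (replace $\gamma$ by $\max\{\gamma,\delta\}$, which still exceeds $\alpha$) so that the inclusion ${\bf A}_\gamma\subseteq{\bf A}_\delta$ you invoke actually holds.
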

\begin{proof}
a)$\Rightarrow$d): Assume that $x$ realizes {\bf A}, that is, ${\bf A}=\appr_v(x,K)$.
Take any $\gamma\in\supp {\bf A}$; we will show that for every $c\in 
{\bf A}_\gamma=\appr_v(x,K)_\gamma$ there is some $d\in \appr_v(x,K)_\gamma$ such that 
$v(x-c)< v(x-d)$. Since $\appr_v(x,K)$ is immediate, we know from part 2) of
Lemma~\ref{imme} that $v(x-K)$ has no maximal element. Hence there is $d\in K$ such 
that $v(x-c)<v(x-d)$. Since $v(x-c)\geq\gamma$, we find that $v(x-d)\geq\gamma$
and therefore, $d\in\appr_v(x,K)_\gamma\,$.

\sn
d)$\Rightarrow$c): Assertion d) means that for all $\gamma\in\supp{\bf A}$ there are
$c,d\in {\bf A}_\gamma$ such that $v(x-d)>v(x-c)$. This implies that
$v(x-d)>\min\{v(x-c),v(c-d)\}$, whence $v(x-c)=v(c-d)\geq\gamma$. 

\sn
c)$\Rightarrow$b): Trivial.

\sn
b)$\Rightarrow$a): Take any $\gamma\in S$. Then ${\bf A}_\gamma$ is a 
closed ultrametric ball of radius $\gamma$ in $(K,v)$. By assertion b), there is 
$c\in {\bf A}_\gamma$ with $v(x-c)\geq \gamma$. We obtain that ${\bf A}_\gamma=
B_\gamma(c,K)$. By the ultrametric triangle inequality,
\begin{eqnarray*}
d\in {\bf A}_\gamma=B_\gamma(c,K) &\Longleftrightarrow& v(c-d)\geq\gamma
\>\Longleftrightarrow\> v(x-d)\geq\gamma \\
&\Longleftrightarrow& d\in\{c'\in K\mid v(x-c')\geq\gamma\}=\appr_v(x,K)_\gamma\>.
\end{eqnarray*}
This shows that ${\bf A}_\gamma=\appr_v(x,K)_\gamma$ for all $\gamma\in S$. Since
$S$ is cofinal in $\supp {\bf A}$, part 1) of Lemma~\ref{uniqnest} shows that this
also holds for all $\gamma\in\supp {\bf A}$.
It remains to show that it also holds for $\gamma\notin\supp{\bf A}$. Since {\bf A}
is an immediate approximation type, we know that the intersection of all 
${\bf A}_\gamma$ for $\gamma\in\supp {\bf A}$ is empty. By what we have shown 
already, this is equal to the intersection of all $\appr_v(x,K)_\gamma$ for $\gamma
\in\supp {\bf A}$. If $\gamma\notin\supp{\bf A}$, then $\gamma>\supp{\bf A}$ and 
$\appr_v(x,K)_\gamma$ must be a subset of the intersection, hence empty and therefore
again equal to ${\bf A}_\gamma\,$. It follows that $\supp\,\appr_v(x,K)=\supp {\bf A}$, 
which has no maximal element. Hence by what we have shown together with Lemma~\ref{suppnle},
$\appr_v(x,K)={\bf A}$.
\end{proof}

\mn
%
%
%
\subsection{Algebraic and transcendental immediate approximation types}  
\label{sectalgtriat}
\mbox{ }\sn
In this section we present the approximation type version of 
Kaplansky's Theorems 2 and 3 (\cite{[Ka]}), which show that each immediate
approximation type can be realized in a simple immediate extension.

We consider an immediate approximation type {\bf A} over $(K,v)$ and a polynomial 
$f\in K[X]$. We will say that {\bf A fixes the value of} $f$\index{approximation
type fixes value of a polynomial} if there is $\delta\in\supp{\bf A}$ such that
the value $vf(c)$ is constant for $c\in {\bf A}_\delta\,$. If ${\bf A}=\appr_v(x,K)$, 
then this means that $vf(c)$ is fixed for $c\nearrow x$. If {\bf A} 
fixes the value of every polynomial in $K[X]$, then it is said to be 
{\bf transcendental}.\index{transcendental approximation type} 
If there is some $f\in K[X]$ whose value is not fixed by {\bf A}, 
then {\bf A} is said to be {\bf algebraic}.\index{algebraic approximation type}

If there exists any polynomial $f\in K[X]$ whose value is not fixed by
{\bf A}, then there also exists a monic polynomial of the same 
degree having the same property (since this property is not lost by
multiplication with non-zero constants from $K$). If $f(X)$ is a monic
polynomial of minimal degree such that {\bf A} does not fix the value of $f$, then 
it will be called an \bfind{associated minimal polynomial} for {\bf A} and its 
degree $\bd:=\deg f$ will be called the \bfind{degree of {\bf A}}. We set $\bd:=
\infty$ if {\bf A} is transcendental. Note that an
associated minimal polynomial $f$ for {\bf A} is always
irreducible over $K$. Indeed, if $g,h\in K[X]$ are of degree less than $\deg f$, then 
{\bf A} fixes the value of $g$ and $h$ and thus also of $g\cdot h$. 

\parm
We will now study the behaviour of polynomials with respect to
immediate approximation types $\appr_v (x,K)$. 
We use the Taylor expansion
\begin{equation}                             \label{Taylorexp}
f(X) = \sum_{i=0}^{n} \partial_i f(c) (X-c)^i
\end{equation}
where $\partial_i f$ denotes the $i$-th \bfind{Hasse-Schmidt derivative}.

We need the following lemma
for ordered abelian groups, which is a reformulation of Kaplansky's Lemma~4 in
\cite{[Ka]}. For archimedean ordered groups, it was proved
by Ostrowski in \cite{Os}.
\begin{lemma}                               \label{OST}
Take elements $\alpha_1,\ldots,\alpha_m$ of an ordered abelian group
$\Gamma$ and a subset $\Upsilon\subset\Gamma$ without maximal element.
Let $t_1,\ldots,t_m$ be distinct integers. Then there exists an element
$\beta\in \Upsilon$ and a permutation $\sigma$ of the indices
$1,\ldots,m$ such that for all $\gamma\in \Upsilon$, $\gamma\geq\beta$,
\[
\alpha_{\sigma(1)} + t_{\sigma(1)}\gamma >
\alpha_{\sigma(2)} + t_{\sigma(2)}\gamma > \ldots >
\alpha_{\sigma(m)} + t_{\sigma(m)}\gamma\>.
\]
\end{lemma}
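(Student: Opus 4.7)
The plan is to reduce everything to pairwise comparisons, which are controlled by single linear inequalities in $\gamma$. For each ordered pair $(i,j)$ with $i\ne j$, the inequality $\alpha_i + t_i\gamma > \alpha_j + t_j\gamma$ rearranges to $(t_i - t_j)\gamma > \alpha_j - \alpha_i$. Since the $t_i$ are distinct, $k_{ij}:= t_i - t_j \ne 0$, and after swapping the roles of $i$ and $j$ if necessary I may assume $k_{ij}>0$; set $\delta_{ij}:=\alpha_j - \alpha_i$. The lemma will follow once I show that for each such pair the sign of $k_{ij}\gamma - \delta_{ij}$ is eventually strictly constant as $\gamma$ ranges over $\Upsilon$ upward.

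The key step is a dichotomy: either there exists $\beta_{ij}\in\Upsilon$ with $k_{ij}\beta_{ij} > \delta_{ij}$, or $k_{ij}\gamma < \delta_{ij}$ for every $\gamma\in\Upsilon$. In the first case, for all $\gamma\in\Upsilon$ with $\gamma\geq\beta_{ij}$ one has $k_{ij}\gamma \geq k_{ij}\beta_{ij} > \delta_{ij}$, so the strict inequality $\alpha_i + t_i\gamma > \alpha_j + t_j\gamma$ holds. In the second case, any $\beta_{ij}\in\Upsilon$ works and the reverse strict inequality holds from $\beta_{ij}$ onward. To see the dichotomy, suppose there is no $\beta_{ij}$ as in the first case; then $k_{ij}\gamma \leq \delta_{ij}$ throughout $\Upsilon$. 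This is where I would invoke the no-maximum hypothesis: if the equality $k_{ij}\gamma_0 = \delta_{ij}$ were realized by some $\gamma_0\in\Upsilon$, then picking $\gamma_1>\gamma_0$ in $\Upsilon$ would give $k_{ij}\gamma_1 > k_{ij}\gamma_0 = \delta_{ij}$, a contradiction; hence the strict inequality $k_{ij}\gamma < \delta_{ij}$ holds everywhere on $\Upsilon$.

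To conclude, I would take $\beta$ to be the maximum of the finitely many thresholds $\beta_{ij}$ (for $i<j$), noting that the maximum of a finite subset of $\Upsilon$ lies in $\Upsilon$. For every $\gamma\in\Upsilon$ with $\gamma\geq\beta$, all the values $\alpha_1+t_1\gamma,\ldots,\alpha_m+t_m\gamma$ are pairwise compared strictly, and the resulting binary relation is automatically a total strict ordering (transitivity is inherited from the actual strict order of group elements at $\gamma=\beta$). This total order determines the desired permutation $\sigma$.

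The main obstacle is to resist the temptation to ``divide by $k_{ij}$'' — in a general, possibly non-archimedean ordered abelian group $\Gamma$ this is not available, and one cannot pass to a divisible hull without changing the set $\Upsilon$. The role of the no-maximum hypothesis is precisely to exclude the edge case $k_{ij}\gamma_0=\delta_{ij}$ from persisting, and this is the only nontrivial ingredient; the rest is finite bookkeeping.
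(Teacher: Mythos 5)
Your proof is correct: the pairwise reduction, the dichotomy (with the no-maximal-element hypothesis used exactly to kill the persistent-equality case), and the observation that transitivity of the resulting eventual order is inherited from the group order at $\gamma=\beta$ all hold up, and you rightly avoid dividing by $t_i-t_j$ in a non-divisible group. The paper itself gives no proof of this lemma but simply cites Kaplansky's Lemma~4, whose argument is essentially this same pairwise eventual-sign comparison, so your route matches the intended one.
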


\pars
If the immediate approximation type {\bf A} is of degree {\bf d} and
$f\in K[X]$ is of degree at most {\bf d}, then {\bf A} fixes the value
of every Hasse--Schmidt derivative $\partial_i f$ of $f$ ($1\leq i\leq\deg f$), 
since every such derivative has degree less than {\bf d}. So we can define
$\beta_i$ to be the fixed value $v\partial_i f(c)$ for $c\nearrow
x$.\glossary{$\beta_i$} In certain cases, a derivative may be
identically 0. In this case, we have $\beta_i =\infty$. However, the
Taylor expansion of $f$ shows that not all derivatives vanish
identically, and the vanishing ones will not play a role in our
computations.

By use of Lemma~\ref{OST}, we can now prove:
\begin{lemma}                        \label{C4+}
Take an immediate approximation type ${\bf A}=\appr_v(x,K)$ of degree
{\bf d} over $(K,v)$ and $f\in K[X]$ a polynomial of degree at most
{\bf d}. Further, let $\beta_i$ denote the fixed value $v\partial_i f(c)$
for $c\nearrow x$. Then there is a positive integer $\bh\leq\deg f$ such
that
\begin{equation}                                   \label{xf-}
\beta_{\bf h} + {\bf h}\cdot v(x-c)<\beta_i+i\cdot v(x-c)
\end{equation}
whenever $i\not={\bf h}$, $1\leq i\leq\deg f$ and $c\nearrow x$. Hence,
\begin{equation}                                \label{bhmin+}
v(f(x)-f(c)) = \beta_{\bhsc} + {\bf h}\cdot v(x-c)\;\;\;\mbox{ for }
c\nearrow x\>.
\end{equation}
Consequently, if {\bf A} fixes the value of $f$, then
\[
v(f(x)-f(c))>vf(x)=vf(c)\;\;\;\mbox{ for }c\nearrow x\>,
\]
and if {\bf A} does not fix the value of $f$, then
\[
vf(x)>vf(c)= \beta_{\bhsc} + {\bf h}\cdot v(x-c)\;\;\;\mbox{ for }
c\nearrow x\>.
\]
\end{lemma}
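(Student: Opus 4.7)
The plan is to combine the Taylor expansion (\ref{Taylorexp}) with Lemma~\ref{OST}. Write $n:=\deg f$ and expand
\[
f(x)-f(c)\>=\>\sum_{i=1}^{n}\partial_i f(c)\,(x-c)^i\>.
\]
For each $i\geq 1$, $\deg\partial_i f\leq n-i<\bd$, so $\mathbf{A}$ fixes the value of $\partial_i f$ by definition of $\bd$; thus $\beta_i=v\partial_i f(c)$ is indeed well-defined (with $\beta_i=\infty$ exactly when $\partial_i f\equiv 0$). The $i$-th monomial on the right has value $\beta_i+i\cdot v(x-c)$, and my goal is to find a strict minimizer whose index is independent of $c\nearrow x$.

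To do this, I would apply Lemma~\ref{OST} with $\alpha_i=\beta_i$ and $t_i=i$, the index $i$ ranging over those $1\leq i\leq n$ with $\partial_i f\not\equiv 0$ (so finitely many distinct $t_i$), and with $\Upsilon:=\supp\mathbf{A}$. By Lemma~\ref{imme}~(2) and (3), $\supp\mathbf{A}=v(x-K)$ has no maximal element, so the hypothesis of Lemma~\ref{OST} holds. The lemma produces a threshold $\beta\in\Upsilon$ and a permutation $\sigma$ strictly ordering the quantities $\beta_i+i\gamma$ for every $\gamma\in\Upsilon$ with $\gamma\geq\beta$. Define $\bh:=\sigma(m)$, the minimizing index; then $1\leq\bh\leq n$, and for every $c\in\mathbf{A}_\beta$ (so $v(x-c)\geq\beta$) inequality (\ref{xf-}) holds, with the vanishing-derivative case trivial since $\beta_i=\infty$. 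The ultrametric strict-minimum rule then gives (\ref{bhmin+}) immediately.

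For the two consequences, set $\gamma(c):=\beta_\bhsc+\bh\cdot v(x-c)$. Since $\mathbf{A}$ is immediate, $v(x-c)$ strictly increases for $c\nearrow x$, so $\gamma(c)$ does as well (recall $\bh\geq 1$). I would then apply the ultrametric identity to $f(x)=f(c)+(f(x)-f(c))$, whose second summand has value $\gamma(c)$. The fixed value $vf(x)$ can equal the strictly increasing value $\gamma(c)$ for at most one $c$ along the cofinal portion, so for $c\nearrow x$ exactly one of the following happens: either $vf(x)<\gamma(c)$, in which case $vf(c)=vf(x)$ is constant and $\mathbf{A}$ fixes $vf$, or $vf(x)>\gamma(c)$, in which case $vf(c)=\gamma(c)$ is not constant and $\mathbf{A}$ does not fix $vf$. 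This dichotomy yields the two displayed conclusions.

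The main obstacle is really just the clean application of Lemma~\ref{OST}: one has to recognize that a finite set of affine functions $\gamma\mapsto \beta_i+i\gamma$ over a cofinal, upward-unbounded subset of the value group ultimately becomes linearly ordered, and that this is exactly the Ostrowski/Kaplansky trick packaged in Lemma~\ref{OST}. Once $\bh$ is produced with the right uniformity in $c\nearrow x$, everything else is routine ultrametric arithmetic. A minor technical point to check is the treatment of indices with $\partial_i f\equiv 0$, which must be excluded from the OST input but satisfy (\ref{xf-}) vacuously.
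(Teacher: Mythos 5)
Your proposal is correct and follows essentially the same route as the paper: the Taylor expansion combined with Lemma~\ref{OST} applied to $\alpha_i=\beta_i$, $t_i=i$, $\Upsilon=\supp{\bf A}$ to produce the uniform minimizing index ${\bf h}$, followed by routine ultrametric arithmetic. The only (immaterial) difference is that you organize the final step as a dichotomy on whether the increasing value $\beta_{\bhsc}+{\bf h}\cdot v(x-c)$ eventually exceeds the fixed value $vf(x)$ and then identify which branch corresponds to {\bf A} fixing the value of $f$, whereas the paper argues directly from each of the two hypotheses; your explicit exclusion of the indices with $\partial_i f\equiv 0$ from the input to Lemma~\ref{OST} is a correct handling of a point the paper addresses in the paragraph preceding the lemma.
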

\begin{proof}
Set $n=\deg f$. We consider the Taylor expansion
\begin{equation}                  \label{Tv}
f(x) - f(c) = \partial_1 f(c)(x-c)+\ldots+\partial_n f(c)(x-c)^n
\end{equation}
with $c\in K$. We have that $v\partial_i f(c)(x-c)^i=\beta_i+i\cdot v(x-c)$ for
$c\nearrow x$. So we apply the foregoing lemma with $\alpha_i=\beta_i$
and $t_i=i$, and with $\Upsilon=\supp {\bf A}$ (which
has no maximal element since {\bf A} is an immediate approximation
type). We find that there is an integer $\bh\leq\deg f$ such that
$\beta_{\bf h}+ {\bf h} v(x-c) <\beta_i+iv(x-c)$ for $c\nearrow x$ and
$i\not={\bf h}$. This is Equation (\ref{xf-}), which in turn
implies Equation (\ref{bhmin+}).

If {\bf A} fixes the value of $f$, then $vf(x) \not=vf(c)$ is impossible
for $c\nearrow x$ since otherwise, the left hand side of (\ref{bhmin+})
would be equal to $\min\{vf(x),vf(c)\}$ and thus fixed while the right
hand side of (\ref{bhmin+}) increases for $c\nearrow x$. This
proves that $vf(x)=vf(c)$ and thus also $v(f(x)-f(c))\geq vf(x)$ for
$c\nearrow x$. But since the left hand side increases, we find that
$v(f(x)-f(c))>vf(x)$ for $c\nearrow x$.

If {\bf A} does not fix the value of $f$, then $vf(x)\not=vf(c)$ and
thus $v(f(x)-f(c))=\min\{vf(x),vf(c)\}$ for $c\nearrow x$. Since
$v(f(x)-f(c))$ increases for $c\nearrow x$ and $vf(x)$ is fixed,
the minimum must be $vf(c)$.
\end{proof}

\parm
If $g\in K[X]$ has a degree smaller than the degree of {\bf A}, then by
Lemma~\ref{C4+}, the value of $g(x)$ in $(K(x),v)$ is given by
$vg(x)=vg(c)$ for $c\nearrow x$. Since $g(c)\in K$, that means that the
value of $g(x)$ is uniquely determined by {\bf A} and the restriction of
$v$ to $K$. If $g$ is a nonzero polynomial, then $g(c)\not=0$ for
$c\nearrow x$ (since there is a nonempty ${\bf A}_\gamma$ which does
not contain the finitely many zeros of $g$, as {\bf A} is immediate).
Consequently, $g(x)\not=0$, which shows that the elements
$1,x,\ldots,x^{{\bf d} -1}$ are $K$-linearly independent.

We even know that $v(g(x)-g(c))> vg(x)$ for $c\nearrow x$. This means
that $(K,v)\subset (K+Kx+\ldots+ Kx^{{\bf d}-1},v)$ is an immediate
extension of valued vector spaces. If ${\bf d}=[K(x):K]<\infty$, then
$K(x)=K[x]=K+Kx+\ldots+ Kx^{{\bf d}-1}$, so by Lemma~\ref{ievagus}, the 
extension $(K(x)|K,v)$ is immediate. If ${\bf d}=\infty$, then
$(K,v) \subset (K[x],v)$ is immediate. But then again it follows that
the extension $(K(x)|K,v)$ is immediate. Indeed, if
$v(g(x)-g(c)) >vg(x)$ and $v(h(x)-h(c))>vh(x)$, then $vg(x)=vg(c)$,
$vh(x)=vh(c)$ and
\begin{eqnarray*}
v\left(\frac{g(x)}{h(x)}-\frac{g(c)}{h(c)}\right) & = &
v\left[g(x)h(c)-g(c)h(x)\right]-vh(x)h(c)\\
 & = & v\left[g(x)h(c)-g(c)h(c)+g(c)h(c)-g(c)h(x)\right]-vh(x)h(c)\\
 & = & v\left[(g(x)-g(c))h(c)+g(c)(h(c)-h(x))\right]-vh(x)h(c)\\
 & > & vg(x)h(x) -vh(x)h(x) \>=\> v\,\frac{g(x)}{h(x)}\>.
\end{eqnarray*}
We have proved:

\begin{lemma}                                           \label{CK1}
Take an immediate approximation type ${\bf A}=\appr_v(x,K)$ of
degree {\bf d} over $(K,v)$. Then the valuation on the valued
$(K,v)$-vector subspace 
\[
(K+Kx+\ldots+ Kx^{{\bf d}-1},v)
\]
of $(K(x),v)$ is uniquely determined by {\bf A} because
\[
vg(x)=vg(c)\;\;\;\mbox{ for }c\nearrow x
\]
for every $g(x)\in K+Kx+\ldots+ Kx^{{\bf d} -1}$. The elements
$1,x,\ldots,x^{{\bf d} -1}$ are $K$-linearly independent. In
particular, $x$ is transcendental over $K$ if ${\bf d}=\infty$.

Moreover, the extension $(K,v)\subset (K+Kx+\ldots+Kx^{{\bf d}-1},v)$
of valued vector spaces is immediate. In particular, if $\mbox{\bf d}=
\infty$ or if $\mbox{\bf d} =[K(x):K]<\infty$, then $(K[x]|K,v)$ is
immediate and the same is consequently true for the 
extension $(K(x)|K,v)$.
\end{lemma}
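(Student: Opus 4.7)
The plan is to build on Lemma~\ref{C4+}, which supplies the crucial valuative information about polynomials in $x$ from an immediate approximation type. The key observation is that if $g\in K[X]$ has $\deg g<\bd$, then by the minimality in the definition of $\bd$, the approximation type \textbf{A} fixes the value of $g$. Applying Lemma~\ref{C4+} to such $g$ therefore yields both $vg(x)=vg(c)$ and the strict inequality $v(g(x)-g(c))>vg(x)$ for $c\nearrow x$. This takes care of the value formula asserted in the lemma and shows that the valuation on $K+Kx+\ldots+Kx^{\bd-1}$ is determined by \textbf{A} together with $v$ restricted to $K$.

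Next I would deduce $K$-linear independence of $1,x,\ldots,x^{\bd-1}$ by contradiction. A nontrivial dependence yields a nonzero polynomial $g$ with $\deg g<\bd$ and $g(x)=0$. Since $g$ has only finitely many roots in $K$ while \textbf{A} is immediate (so $\supp\mathbf{A}$ has no maximum and the balls $\mathbf{A}_\gamma$ cannot eventually consist only of roots of $g$), I can pick $c\nearrow x$ with $g(c)\ne 0$; then $vg(x)=vg(c)<\infty$, contradicting $g(x)=0$. When $\bd=\infty$ the linear independence holds for all powers of $x$, giving transcendence of $x$ over $K$.

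For immediacy of the valued vector space $(K+Kx+\ldots+Kx^{\bd-1},v)$ over $(K,v)$, I would invoke the definition in Section~\ref{sectie}: the strict inequality $v(g(x)-g(c))>vg(x)$ says that every element $g(x)$ admits approximations $g(c)\in K$ of strictly greater value of the difference, which is exactly what the vector-space definition requires. In the case $\bd<\infty$ with $\bd=[K(x):K]$, the set $K+Kx+\ldots+Kx^{\bd-1}$ already equals $K[x]=K(x)$, so the ring and field extensions are immediate as vector space extensions; when $\bd=\infty$, every polynomial expression is captured by this argument, making $(K[x]|K,v)$ immediate by Lemma~\ref{ievagus}.

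The last step, promoting immediacy from $K[x]$ to $K(x)$ when $\bd=\infty$, uses the identity
\[
g(x)h(c)-g(c)h(x)\>=\>(g(x)-g(c))h(c)+g(c)(h(c)-h(x)),
\]
valid for $g,h\in K[x]$ with $h\ne 0$. Both summands have value exceeding $vg(x)h(x)$ by the strict inequalities already established for $g$ and $h$, so dividing by $h(x)h(c)$ (which has value $2vh(x)$) yields $v(g(x)/h(x)-g(c)/h(c))>v(g(x)/h(x))$, and Lemma~\ref{ievagus} finishes the proof. The main obstacle I anticipate is simply organizing the case split $\bd=\infty$ versus $\bd=[K(x):K]<\infty$ cleanly so that the vector-space, ring, and field statements follow in the right logical order without repeating the same computation.
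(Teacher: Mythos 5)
Your proposal is correct and follows essentially the same route as the paper: it applies Lemma~\ref{C4+} to polynomials of degree less than $\bd$ (whose values are automatically fixed by minimality), derives linear independence from the finiteness of the zero set of $g$ combined with immediacy of {\bf A}, reads off immediacy of the vector space extension from the strict inequality $v(g(x)-g(c))>vg(x)$, and passes to $K(x)$ via the same quotient identity. No gaps.
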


\begin{theorem} {\rm\ \ \ (Theorem 2 of \cite{[Ka]}, approximation
type version)}\label{KT2at}\n
For every transcendental immediate approximation type {\bf A} over
$(K,v)$ there exists a simple immediate transcendental extension
$(K(x),v)$ such that $\appr_v(x,K)={\bf A}$.

If $(K(y),v)$ is another extension field of $(K,v)$ such that
$\appr_v(y,K) = {\bf A}$, then $y$ is also transcendental over
$K$ and the isomorphism between $K(x)$ and $K(y)$ over $K$ that sends
$x$ to $y$ is valuation preserving.
\end{theorem}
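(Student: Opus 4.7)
The construction I would carry out takes $x$ to be transcendental over $K$ (for instance, an indeterminate $X$) and defines $v$ on $K[x]$ by setting, for each nonzero $f \in K[X]$, the value $vf(x)$ equal to the constant value that $vf(c)$ takes on some ${\bf A}_\delta$. Such $\delta \in \supp {\bf A}$ exists since {\bf A} is transcendental, and the chosen constant is independent of the particular $\delta$ large enough (any sub-ball yields the same constant, by nesting). I would then extend $v$ to $K(x)$ by $v(f/g) = vf(x) - vg(x)$.

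To verify that $v$ is a valuation, given any $f, g \in K[X]$ I would pick $\delta \in \supp {\bf A}$ large enough that $vf$, $vg$, $v(fg)$, and $v(f+g)$ are simultaneously constant on ${\bf A}_\delta$; then for any $c \in {\bf A}_\delta$ the identities $v(f(x)g(x)) = vf(c)+vg(c) = vf(x)+vg(x)$ and $v(f(x)+g(x)) = v(f(c)+g(c)) \geq \min\{vf(c), vg(c)\} = \min\{vf(x), vg(x)\}$ are inherited directly from $v$ on $K$. Finiteness $vf(x) < \infty$ for nonzero $f$ follows because $\bigcap {\bf A} = \emptyset$ permits excluding the finitely many roots of $f$ from ${\bf A}_\delta$ by taking $\delta$ sufficiently large, so $f$ does not vanish identically on the (infinite) ball ${\bf A}_\delta$ and its constant value there lies in $vK$. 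These axioms then extend to $K(x)$ in the standard manner.

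To show $\appr_v(x, K) = {\bf A}$ I would verify condition~(c) of Lemma~\ref{x:realiat}: fix $\gamma \in \supp {\bf A}$ and any $c \in {\bf A}_\gamma$. Since $c \notin \bigcap {\bf A}$, some ${\bf A}_\beta$ does not contain $c$, and by nesting this forces $\beta > \gamma$ and ${\bf A}_\beta \subseteq {\bf A}_\gamma$. For any $c' \in {\bf A}_\beta$ the ultrametric law gives $v(c'-c) \geq \gamma$, and by construction $v(x-c)$ equals this common value $v(c'-c)$. Hence $v(x-c) \geq \gamma$, so $\appr_v(x, K) = {\bf A}$. Since this approximation type has degree ${\bf d} = \infty$, Lemma~\ref{CK1} then gives that $(K(x)|K, v)$ is immediate.

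For the second statement, suppose $(K(y), v)$ also realizes ${\bf A}$. Applying Lemma~\ref{CK1} to $\appr_v(y,K) = {\bf A}$, which has degree $\infty$, shows that $y$ is transcendental over $K$, so there is a $K$-algebra isomorphism $\sigma \colon K(x) \to K(y)$ with $\sigma(x) = y$. For every $f \in K[X]$, both $vf(x)$ and $vf(y)$ are equal to the common eventually constant value of $vf(c)$ for $c \nearrow {\bf A}$, so $\sigma$ preserves $v$ on $K[x]$ and hence on all of $K(x)$. The main subtlety throughout is really just the bookkeeping in checking that the ``eventually constant'' recipe respects sums and products; since any single axiom check involves only finitely many polynomials, taking the largest of their individual stabilizing radii furnishes a common ${\bf A}_\delta$ on which all the relevant values are simultaneously frozen.
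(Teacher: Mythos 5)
Your proposal is correct and follows essentially the same route as the paper's proof: define $vf(x)$ as the value on which $vf(c)$ stabilizes for $c\nearrow{\bf A}$, check the valuation axioms on a common stabilizing ball, verify realization via Lemma~\ref{x:realiat}, and invoke Lemma~\ref{CK1} (degree ${\bf d}=\infty$) for immediacy, transcendence of $y$, and preservation of the valuation under $x\mapsto y$. No gaps.
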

\begin{proof}
We take $K(x)|K$ to be a transcendental extension and define the
valuation on $K(x)$ as follows. In view of the rule $v(g/h)=vg-vh$, it
suffices to define $v$ on $K[x]$. Take a nonzero polynomial $g\in K[X]$. By assumption,
{\bf A} fixes the value of $g$, that is, there is $\beta\in vK$ such
that $vg(c)=\beta$ for $c\nearrow {\bf A}$. We set $vg(x)=\beta$. 
Our definition implies that $vg\not=\infty$ for every nonzero $g\in K[x]$.

Take $g,h\in K(X)$. Again by our definition, $vg(x)=vg(c)$ and $vh(x)=vh(c)$ for
$c\nearrow {\bf A}$. Thus, 
\[
vg(x)h(x) \>=\> v(gh(x)) \>=\> v(gh(c))\>=\> vg(c)h(c)\>=\> vg(c)+vh(c) \>=\> 
vg(x)+vh(x)
\]
and 
\begin{eqnarray*}
v(g(x)+h(x))&=& v((g+h)(x)) \>=\> v((g+h)(c)) \>=\> v(g(c)+h(c))\\
&\geq& \min\{vg(c),vh(c)\} \>=\>\min\{vg(x),vh(x)\}
\end{eqnarray*}
for $c\nearrow {\bf A}$. So indeed, our definition
yields a valuation $v$ on $K(x)$ which extends the valuation $v$ of $K$.
Under this valuation, we have that ${\bf A}=\appr_v(x,K)$; this is seen as
follows. In view of Lemma~\ref{x:realiat}, it suffices to prove that for
every $\gamma\in\supp{\bf A}$, we have that $v(x -c_\gamma) \geq\gamma$
for each $c_\gamma\in {\bf A}_\gamma$. But this follows directly from
our definition of $v(x-c_\gamma)$ because $c\in {\bf A}_\gamma$ for 
$c\nearrow {\bf A}$ and thus $v(x-c_\gamma)=v(c-c_\gamma)\geq\gamma$.

From Lemma~\ref{CK1}, we now infer that $(K(x)|K,v)$ is an
immediate extension. Take an element $y$ in some valued field
extension of $(K,v)$ such that ${\bf A}=\appr(y,K)$. By hypothesis, the degree 
of {\bf A} is $\infty$. From Lemma~\ref{CK1} we can thus deduce that $y$ is
transcendental over $K$. Hence,
sending $x$ to $y$ induces an isomorphism from $K(x)$ onto $K(y)$.
We have to show that this isomorphism is
valuation preserving. For this, we only have to show that $vg(x)=vg(y)$
for every $g\in K[X]$. From Lemma~\ref{CK1} we infer that $vg(x)=vg(c)=vg(y)$ holds
for $c\nearrow {\bf A}$; this proves the desired equality.  
\end{proof}

\begin{corollary}                           \label{corKT2}
If $v$ is extended from a valued field $(K,v)$ to a simple field extension $K(y)$ 
such that $\appr_v(y,K)$ is a transcendental immediate approximation type, then $y$
is transcendental over $K$, the extension is uniquely determined by $\appr_v(y,K)$, 
and $(K(y)|K,v)$ is immediate.
\end{corollary}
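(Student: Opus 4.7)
The plan is to derive the corollary by combining Lemma~\ref{CK1} with Theorem~\ref{KT2at}, both of which have already been proved. Let ${\bf A}:=\appr_v(y,K)$. Since ${\bf A}$ is transcendental, by definition no polynomial in $K[X]$ has its value unfixed by ${\bf A}$, and so the associated degree is ${\bf d}=\infty$.

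First I would establish that $y$ is transcendental over $K$. This is an immediate consequence of Lemma~\ref{CK1} applied to ${\bf A}=\appr_v(y,K)$: since ${\bf d}=\infty$, that lemma asserts that $1,y,y^2,\ldots$ are $K$-linearly independent, i.e., $y$ is transcendental. The same lemma then also gives that $(K(y)|K,v)$ is immediate (the case ${\bf d}=\infty$ of the last sentence of its statement).

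For uniqueness, I would invoke Theorem~\ref{KT2at}. That theorem produces a simple immediate transcendental extension $(K(x),v)$ with $\appr_v(x,K)={\bf A}$. Applying the second part of the same theorem to the pair $(K(x),v)$ and $(K(y),v)$ — both being extensions in which $x$ respectively $y$ realize ${\bf A}$ — yields that the $K$-isomorphism $K(x)\to K(y)$ sending $x\mapsto y$ is valuation-preserving. Thus the valuation on $K(y)$ is determined up to equivalence over $v$ by ${\bf A}$ alone.

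There is no real obstacle here: the work has been done in Lemma~\ref{CK1} and Theorem~\ref{KT2at}, and the corollary is essentially a repackaging of their conclusions in the single-extension setting, with the observation that the hypothesis ``$\appr_v(y,K)$ is a transcendental immediate approximation type'' already forces $y$ to be transcendental rather than allowing this as a separate assumption.
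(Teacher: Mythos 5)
Your proposal is correct and follows essentially the same route as the paper, whose proof also rests on Theorem~\ref{KT2at}: it constructs an immediate extension $(K(x)|K,v)$ realizing the same approximation type and then invokes the uniqueness clause of that theorem to get a valuation-preserving $K$-isomorphism $K(x)\to K(y)$, from which all three assertions follow. Your additional direct appeal to Lemma~\ref{CK1} for the transcendence of $y$ and the immediacy of $(K(y)|K,v)$ is a harmless variant, since that lemma underlies Theorem~\ref{KT2at} anyway.
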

\begin{proof}
By the foregoing theorem, there is an immediate extension $(K(x)|K,v)$
such that $\appr_v(x,K)=\appr_v(y,K)$, with $x$ transcendental over $K$. By
the same theorem, there is a valuation preserving isomorphism of $K(x)$
and $K(y)$ over $K$. This proves our assertions.
\end{proof}

The next lemma will show that every algebraic immediate approximation type 
is realized by some element in an algebraic valued field extension.
\begin{lemma}                               \label{nfvfreal}
Take an algebraic immediate approximation type {\bf A} over $(K,v)$,
a polynomial $f\in K[X]$ whose value is not fixed by {\bf A}, and a root
$a$ of $f$. Then there is an extension of $v$ from $K$ to $K(a)$ such
that ${\bf A}=\appr(a,K)$.
\end{lemma}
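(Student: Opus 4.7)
The plan is to mimic Kaplansky's proof of Theorem~3 (here Theorem~\ref{KT3}) in the language of approximation types. The first step is a reduction: replacing $f$ by an associated minimal polynomial for ${\bf A}$ of which $a$ is a root, I may assume $f$ is monic, irreducible, and of degree $\bd$. Then $K(a)\cong K[X]/(f)$, so every element of $K(a)$ has a unique representation $g(a)$ with $g\in K[X]$ and $\deg g<\bd$.

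Since $\deg g<\bd$ implies that ${\bf A}$ fixes the value of $g$, I would set
\begin{equation*}
v(g(a)) \>:=\> vg(c)\quad\mbox{for }c\nearrow{\bf A}\>,
\end{equation*}
with the convention $v(0):=\infty$. Additivity of $v$ follows from its counterpart on $(K,v)$, using that a finite conjunction of assertions of the form ``$\varphi(c)$ for $c\nearrow{\bf A}$'' is again of that form.

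The technical heart is multiplicativity. Given $g_1,g_2\in K[X]$ of degree $<\bd$, Euclidean division yields $g_1 g_2=qf+r$ with $\deg q,\deg r<\bd$. Irreducibility of $f$ together with $\deg g_i<\deg f$ force $r\ne 0$, so $g_1(a)g_2(a)=r(a)\ne 0$. One needs $vr(c)=vg_1(c)+vg_2(c)$ for $c\nearrow{\bf A}$, which by multiplicativity of $v$ on $K$ reduces to $v(q(c)f(c)+r(c))=vr(c)$ eventually. Here Lemma~\ref{C4+} applied to $f$ shows that $vf(c)$ grows without bound along ${\bf A}$ (since ${\bf A}$ does not fix the value of $f$), while $vq(c)$ and $vr(c)$ are fixed finite values (their degrees being below $\bd$); the ultrametric minimum then yields the equality.

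Finally, to check $\appr_v(a,K)={\bf A}$, I would apply criterion~c) of Lemma~\ref{x:realiat}. For any $\gamma\in\supp{\bf A}$ and $c\in{\bf A}_\gamma$, applying the definition of $v$ to the polynomial $X-c$ gives $v(a-c)=v(c'-c)$ for $c'\nearrow{\bf A}$. Taking $c'\in{\bf A}_\delta$ with $\delta\geq\gamma$ (possible because $\supp{\bf A}$ has no maximal element by immediacy), part~1) of Lemma~\ref{uniqnest} places $c'$ inside ${\bf A}_\gamma$, so $v(c'-c)\geq\gamma$ and hence $v(a-c)\geq\gamma$. The main expected obstacle is the multiplicativity verification, whose success rests entirely on the asymptotic domination of $q(c)f(c)$ by $r(c)$ that Lemma~\ref{C4+} makes precise.
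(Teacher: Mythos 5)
Your construction follows Kaplansky's original proof of his Theorem~3, which is a genuinely different route from the paper's. The paper's proof is a short conjugation argument: choose \emph{any} extension $w$ of $v$ to $K(a)$ (and on to the splitting field of $f$ in $\tilde{K}$), write $f=d\prod_i(X-a_i)$, observe that $wf(c)=wd+\sum_i w(c-a_i)$ cannot fail to be fixed for $c\nearrow{\bf A}$ unless some individual $w(a_i-c)$ is not fixed, and then replace $w$ by $w\circ\sigma$ for an automorphism $\sigma$ of $\tilde{K}|K$ with $\sigma a=a_i$; criterion d) of Lemma~\ref{x:realiat} then finishes the proof with no need to build the valuation by hand. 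Measured against the statement actually being proved, your proposal has two genuine gaps.

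First, the opening reduction is not available. You are handed a root $a$ of the \emph{given} $f$, and nothing guarantees that $a$ is a root of any associated minimal polynomial: $f$ may be reducible, and even if irreducible its degree may exceed $\bd$. In that case $[K(a):K]>\bd$, the elements $g(a)$ with $\deg g<\bd$ do not exhaust $K(a)$, and ${\bf A}$ does not fix the values of the remaining representatives, so your recipe does not define $v$ on all of $K(a)$. This is exactly the difficulty the paper's automorphism trick sidesteps. (For the application in Theorem~\ref{KT3at} the given $f$ \emph{is} an associated minimal polynomial, so restricting to that case is harmless there, but it must be stated as a hypothesis rather than obtained by "replacing $f$".)

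Second, the multiplicativity step --- which you rightly identify as the heart of the matter --- is justified by a claim that is both circular and false. Lemma~\ref{C4+} presupposes a realizing element $x$ with ${\bf A}=\appr_v(x,K)$, which is precisely what you are in the middle of constructing; and what that lemma (or a direct Taylor-expansion argument avoiding $x$) actually yields is $vf(c)\geq\beta_{\bh}+\bh\delta$ for $c\in{\bf A}_\delta$, a bound that increases with $\delta\in\supp{\bf A}$ but need not be cofinal in $vK$ and hence need never exceed the fixed value $vr(c)-vq(c)$. So the asserted asymptotic domination of $r(c)$ by $q(c)f(c)$ does not follow. The correct argument is Kaplansky's: if the eventually fixed values of $vg_1(c)+vg_2(c)$ and of $vr(c)$ were distinct, then $vq(c)+vf(c)=v(g_1(c)g_2(c)-r(c))$ would eventually equal their minimum and hence be fixed, so ${\bf A}$ would fix the value of $f$ --- a contradiction. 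With these two repairs (assume $f$ is an associated minimal polynomial, and replace the domination argument by this contradiction argument) your construction goes through; the concluding verification via criterion c) of Lemma~\ref{x:realiat} is fine.
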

\begin{proof}
We choose some extension $w$ of $v$ from $K$ to $K(a)$.
We write $f(X)=d\prod_{i=1}^{\deg f}(X-a_i)$ with $d\in K$ and $a_i\in
\tilde{K}$. If for all $i$, the values $w(c-a_i)$ would be fixed for
$c\nearrow {\bf A}$, then {\bf A} would fix the value of $f$, contrary
to our assumption. Hence there is a root $a_i$ of $f$ such that $w(a_i-c)$
is not fixed for $c\nearrow{\bf A}$. Take some automorphism $\sigma$ of
$\tilde{K}|K$ such that $\sigma a=a_i$ and set $v:=w\circ\sigma$. Then $v$
extends the valuation of $K$, and $v(a-c)=w\circ\sigma(a-c)=w(\sigma
a-c)=w(a_i-c)$ is not fixed for $c\nearrow{\bf A}$. By
Lemma~\ref{x:realiat}, ${\bf A}=\appr(a,K)$.
\end{proof}

The following is the analogue of Theorem~\ref{KT2at} for immediate
algebraic approximation types.
\begin{theorem} {\rm\ \ \ (Theorem 3 of \cite{[Ka]}, approximation type
version)}\label{KT3at}\n
For every algebraic immediate approximation type {\bf A} over $(K,v)$ of
degree {\bf d} with associated minimal polynomial $f(X)\in K[X]$
and a root $a$ of $\,f$, there exists an extension of $v$ from
$K$ to $K(a)$ such that $(K(a)|K,v)$ is an immediate extension
and $\appr(a,K)={\bf A}$.

If $(K(b),v)$ is another extension field of $(K,v)$ such that
$\appr(b,K)={\bf A}$, then any field isomorphism between $K(a)$
and $K(b)$ over $K$ sending $a$ to $b$ preserves the valuation.
(Note that such an isomorphism exists if and only if $b$ is also a
root of~$\,f$.)
\end{theorem}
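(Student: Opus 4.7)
The plan is to assemble this theorem from the two lemmas already at hand: Lemma~\ref{nfvfreal}, which realizes every algebraic immediate approximation type by a root of a polynomial whose value is not fixed, and Lemma~\ref{CK1}, which says that on $K + Kx + \ldots + Kx^{\mathbf{d}-1}$ the valuation is completely determined by $\mathbf{A}$ via the formula $vg(x) = vg(c)$ for $c\nearrow x$.

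For the existence part, I would proceed as follows. The associated minimal polynomial $f$ has, by definition, a value not fixed by $\mathbf{A}$, so Lemma~\ref{nfvfreal} applies to $f$ and the root $a$, producing an extension of $v$ from $K$ to $K(a)$ with $\appr_v(a,K) = \mathbf{A}$. Since $f$ was noted to be irreducible and is monic of degree $\mathbf{d}$, we have $[K(a):K] = \mathbf{d} < \infty$, so $K(a) = K + Ka + \ldots + Ka^{\mathbf{d}-1}$. Now apply Lemma~\ref{CK1} with $x = a$: the hypothesis $\mathbf{d} = [K(a):K] < \infty$ is met, hence $(K(a)|K, v)$ is immediate.

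For the uniqueness part, suppose $\sigma \colon K(a) \to K(b)$ is a $K$-isomorphism with $\sigma a = b$. Then $f(b) = \sigma(f(a)) = 0$, so $b$ is also a root of $f$, giving $[K(b):K] = \mathbf{d}$ and $K(b) = K + Kb + \ldots + Kb^{\mathbf{d}-1}$. Applying Lemma~\ref{CK1} to $a$ and independently to $b$ (both realize $\mathbf{A}$ of degree $\mathbf{d}$), we obtain for every $g \in K[X]$ with $\deg g < \mathbf{d}$ that
\[
vg(a) = vg(c) = vg(b) \qquad \text{for } c \nearrow \mathbf{A}.
\]
Every element of $K(a)$ can be written as $g(a)$ for some such $g$, and $\sigma(g(a)) = g(b)$. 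Therefore $v(\sigma(g(a))) = vg(b) = vg(a) = v(g(a))$, so $\sigma$ preserves the valuation.

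The argument is essentially a bookkeeping step built on top of the substantive content already packed into Lemmas~\ref{nfvfreal} and~\ref{CK1}. The only point requiring any care is noting that the existence of a $K$-isomorphism sending $a$ to $b$ forces $b$ to be a root of the same irreducible polynomial $f$, so Lemma~\ref{CK1} applies symmetrically on both sides; once this is in hand, the common formula $vg(c)$ for $c \nearrow \mathbf{A}$ supplies the needed identification of valuations.
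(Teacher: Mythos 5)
Your proposal is correct and follows essentially the same route as the paper: existence and immediacy come from Lemma~\ref{nfvfreal} combined with Lemma~\ref{CK1} (using that $f$ is irreducible of degree $\mathbf{d}$, so $[K(a):K]=\mathbf{d}<\infty$), and the uniqueness statement is obtained by applying Lemma~\ref{CK1} to both $a$ and $b$ to get $vg(a)=vg(c)=vg(b)$ for $c\nearrow{\bf A}$ for all $g$ of degree less than $\mathbf{d}$. The only difference is that you spell out the bookkeeping (e.g., that $b$ is forced to be a root of $f$) slightly more explicitly than the paper does.
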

\begin{proof}
We consider the valuation $v$ of $K(a)$ given by Lemma~\ref{nfvfreal}. Then
$\appr(a,K)={\bf A}$. The fact that $(K(a)|K,v)$ is immediate follows
from Lemma~\ref{CK1}.

The last assertion of our theorem is shown in the same way as the
corresponding assertion of Theorem~\ref{KT2at}: if $\appr(a,K)=\appr
(b,K)$ and $g\in K[X]$ with $\deg g < \mbox{\bf d}$ then, again by
Lemma~\ref{CK1}, $vg(a)=vg(c)= vg(b)$ for $c\nearrow a$. Hence an
isomorphism over $K$ sending $a$ to $b$ will preserve the valuation.
\end{proof}

\pars
\begin{proposition}                     \label{cutdown}
Assume that $(K'(x)|K',v)$ and $(K'|K,v)$ are extensions where $x\notin K'$ and 
$(K,v)$ lies dense in $(K',v)$. Then $\appr_v(x,K')$ and $\appr_v(x,K)$ have the same
support, and if $\appr_v(x,K')$ is immediate, then so is $\appr_v(x,K)$.
If in addition $\appr_v(x,K')$ is transcendental, then so is $\appr_v(x,K)$.
\end{proposition}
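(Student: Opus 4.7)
The plan is to handle the three assertions in order, each reducing to the density assumption in a very concrete way.

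\textbf{Same support.} By Lemma \ref{suppvx-K}, $\supp\,\appr_v(x,K)=v(x-K)\cap vK$ and $\supp\,\appr_v(x,K')=v(x-K')\cap vK'$. Density of $(K,v)$ in $(K',v)$ immediately gives $vK=vK'$ (for any $c'\in (K')^\times$ one picks $c\in K$ with $v(c-c')>vc'$, forcing $vc=vc'$). The inclusion $\supp\,\appr_v(x,K)\subseteq\supp\,\appr_v(x,K')$ is trivial. For the reverse inclusion, given $\gamma\in\supp\,\appr_v(x,K')$ choose $c'\in K'$ with $v(x-c')\geq\gamma$, then by density pick $c\in K$ with $v(c-c')>\gamma$; the ultrametric triangle law gives $v(x-c)\geq\gamma$, so $\gamma\in v(x-K)\cap vK$.

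\textbf{Immediate is inherited.} By Lemma \ref{imme}(2), it suffices to show that $v(x-K)$ has no maximal element. Given any $c\in K$, use that $v(x-K')$ has no maximal element to produce $c'\in K'$ with $v(x-c')>v(x-c)$; then by density pick $c''\in K$ with $v(c''-c')>v(x-c')$, whence $v(x-c'')\geq\min\{v(x-c'),v(c'-c'')\}=v(x-c')>v(x-c)$. So $v(x-c)$ is not maximal in $v(x-K)$, and $\appr_v(x,K)$ is immediate by Lemma \ref{imme}(2).

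\textbf{Transcendental is inherited.} Assume $\appr_v(x,K')$ is transcendental. Fix $f\in K[X]\subseteq K'[X]$. Since $\appr_v(x,K')$ is a transcendental immediate approximation type, Lemma \ref{CK1} (applied over $K'$ with $\bd=\infty$) yields a fixed value $\beta\in vK'$ such that $vf(c')=\beta$ for all $c'$ lying in some ball ${\bf A}'_{\delta}:=\appr_v(x,K')_{\delta}$ with $\delta\in\supp\,\appr_v(x,K')$. By the first part of the proposition we already have $\delta\in\supp\,\appr_v(x,K)$, and since $\appr_v(x,K)_{\delta}=\appr_v(x,K')_{\delta}\cap K\subseteq {\bf A}'_{\delta}$, we conclude that $vf(c)=\beta$ for every $c\in\appr_v(x,K)_{\delta}$. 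Thus $\appr_v(x,K)$ fixes the value of $f$; as $f\in K[X]$ was arbitrary, $\appr_v(x,K)$ is transcendental.

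The only possible snag is a bookkeeping one in the last step: one must observe that the fixed value $\beta$ obtained from the larger field actually lies in $vK$ (which is automatic since $vK=vK'$ and $\beta=vf(c)$ for some $c\in K$), and that the inclusion $\appr_v(x,K)_{\delta}\subseteq\appr_v(x,K')_{\delta}$ is precisely what lets us transport the fixed-value conclusion back to $K$. Each of the three arguments ultimately rests on the same two-line density trick used in the support step, so no deeper machinery is needed beyond Lemmas \ref{imme}, \ref{suppvx-K}, and \ref{CK1}.
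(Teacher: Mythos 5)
Your proof is correct and follows essentially the same route as the paper: the same density trick establishes equality of supports, and the same restriction $\appr_v(x,K)_\delta\subseteq\appr_v(x,K')_\delta$ transports the fixed value of $f$ down to $K$ (the appeal to Lemma~\ref{CK1} is unnecessary there; the definition of ``fixes the value'' already gives $\beta$). The only variation is in the immediacy step, where the paper simply observes that $\bigcap_\gamma\appr_v(x,K)_\gamma\subseteq\bigcap_\gamma\appr_v(x,K')_\gamma=\emptyset$, while you re-run the density argument to show $v(x-K)$ has no maximal element via Lemma~\ref{imme}(2) --- both work, since in the immediate case $v(x-c')\in vK'$ so density applies.
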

\begin{proof}
Take $\gamma\in\supp\,\appr_v(x,K')$ and pick $c'_\gamma\in \appr_v(x,K')_\gamma\,$ so 
that $v(x-c'_\gamma)\geq\gamma$. Since $\gamma\ne\infty$ and $(K,v)$ lies dense 
in $(K',v)$, there is some $c_\gamma\in K$ such 
that $v(c'_\gamma-c_\gamma)>\gamma$. It follows that $v(x-c_\gamma)\geq\min\{
v(x-c'_\gamma),v(c'_\gamma-c_\gamma)\}\geq\gamma$, whence $c_\gamma\in\appr_v(x,K)_\gamma$
and $\gamma\in\supp\,\appr_v(x,K)$. Thus $\supp\,\appr_v(x,K')\subseteq
\supp\,\appr_v(x,K)$. The reverse inclusion follows from the fact that 
$B_\gamma(x,K(x))\cap K\subseteq B_\gamma(x,K'(x))\cap K'$. If $\appr_v(x,K')$ is 
immediate, then from
\[
\bigcap_{\displaystyle\gamma\in \mbox{supp}\,\appr_v(x,K)} \appr_v(x,K)_\gamma
\>\subseteq\> \bigcap_{\displaystyle\gamma\in \mbox{supp}\,\appr_v(x,K')} \appr_v(x,K')_\gamma\>=\>\emptyset
\]
we see that $\appr_v(x,K)$ is immediate.

\pars 
Now assume that $\appr_v(x,K')$ is transcendental immediate, and take $f\in K[X]$. 
Then $f\in K'[X]$ and so
there is $\gamma\in\supp\,\appr_v(x,K')=\supp\,\appr_v(x,K)$ such that $vf(c)$ is 
constant for $c\in\appr_v(x,K')_\gamma\,$. Since $\appr_v(x,K)_\gamma\subseteq
\appr_v(x,K')_\gamma$, the same holds for $c\in\appr_v(x,K)_\gamma\,$. This proves 
that $\appr_v(x,K)$ is transcendental.
\end{proof}

\mn
%
%
%
\subsection{Immediate approximation types versus pseudo Cauchy sequences}  
\label{sectiatpCs}
\mbox{ }\sn
The following proposition reveals the connection between immediate approximation 
types and pseudo Cauchy sequences. Take a valued field $(K,v)$, an immediate
approximation type {\bf A} over $(K,v)$, and a pseudo Cauchy sequence 
$(c_\nu)_{\nu<\lambda}$ in $(K,v)$. 
We will say that {\bf A} and $(c_\nu)_{\nu<\lambda}$ are \bfind{associated} if in
every extension $(L|K,v)$, $x\in L$ realizes {\bf A} if and only if $x$ is a limit 
of $(c_\nu)_{\nu<\lambda}$. 
Since trivial approximation types are not immediate, a pseudo Cauchy sequence in
$(K,v)$ associated with an immediate approximation type over $(K,v)$ cannot have a 
limit in $K$.

\begin{proposition}                       \label{pcsvsat}
1) Take an immediate approximation type {\bf A} over $(K,v)$, a pseudo Cauchy 
sequence $(c_\nu)_{\nu<\lambda}$ in $(K,v)$ without a limit in $K$. Then {\bf A} and 
$(c_\nu)_{\nu<\lambda}$ are associated if and only if for all $\nu<\lambda$,
\begin{equation}                           \label{A=B}
{\bf A}_{\gamma_\nu}=B_{\gamma_\nu}(c_\nu,K)\>.
\end{equation}
If this is the case, then $\supp{\bf A}$ is equal to the least initial segment 
of $vK$ that contains all $\gamma_\nu\,$.
\sn
2) Every immediate approximation type over $(K,v)$ is associated with some 
pseudo Cauchy sequence in $(K,v)$, which consequently has no limit in $K$.
\sn
3) Every pseudo Cauchy sequence in $(K,v)$ without a limit in $K$ is associated 
with a unique immediate approximation type over $(K,v)$.
\end{proposition}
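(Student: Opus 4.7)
The plan is to prove part~1) first and then obtain parts~2) and~3) as essentially formal consequences, using the realization theorems from the preceding subsection together with the nest-theoretic machinery of Section~\ref{sectballs}.

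For part~1), in the forward direction I would first deduce that $\{\gamma_\nu\mid\nu<\lambda\}$ is cofinal in $\supp{\bf A}$: if not, some $\delta\in\supp{\bf A}$ exceeds every $\gamma_\nu$, and then (\ref{A=B}) gives ${\bf A}_\delta\subseteq{\bf A}_{\gamma_\nu}=B_{\gamma_\nu}(c_\nu,K)$ for all $\nu$, placing the nonempty set ${\bf A}_\delta$ inside $\bigcap_\nu B_{\gamma_\nu}(c_\nu,K)$, which by Lemma~\ref{pCs->nest} is the set of limits of $(c_\nu)_{\nu<\lambda}$ in $K$ --- contradicting the hypothesis that this sequence has no limit in $K$. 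Once cofinality is established, I would argue in an arbitrary extension $(L|K,v)$ with $x\in L$: if $x$ is a pseudo limit, then $v(x-c_\nu)\geq\gamma_\nu$ puts $c_\nu\in\appr_v(x,K)_{\gamma_\nu}$, which by Lemma~\ref{ballbasic} equals $B_{\gamma_\nu}(c_\nu,K)={\bf A}_{\gamma_\nu}$, whence Lemma~\ref{x:realiat}(b) with $S=\{\gamma_\nu\}$ yields that $x$ realizes~{\bf A}; conversely, if $x$ realizes {\bf A}, then $c_\nu\in{\bf A}_{\gamma_\nu}=\appr_v(x,K)_{\gamma_\nu}$ forces $v(x-c_\nu)\geq\gamma_\nu$, so $x$ is a pseudo limit.

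For the backward implication of part~1), I would use Theorem~\ref{KT2at} or~\ref{KT3at} (according as ${\bf A}$ is transcendental or algebraic) to realize {\bf A} by an element $x$ in some extension $(K(x)|K,v)$; the association hypothesis then forces $x$ to be a pseudo limit of $(c_\nu)$, so $v(x-c_\nu)\geq\gamma_\nu$, and Lemma~\ref{ballbasic} identifies ${\bf A}_{\gamma_\nu}=\appr_v(x,K)_{\gamma_\nu}$ with $B_{\gamma_\nu}(c_\nu,K)$. The final assertion about $\supp{\bf A}$ is immediate from the cofinality already established combined with the fact that $\supp{\bf A}$ is an initial segment.

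For part~2), I would realize the given immediate approximation type {\bf A} by some $x$ in an immediate extension $(K(x)|K,v)$ via Theorem~\ref{KT2at} or~\ref{KT3at}, noting that $x\notin K$ since an immediate approximation type is non-trivial. Applying Theorem~\ref{KT1} to $x\in K(x)\setminus K$ produces a pseudo Cauchy sequence $(c_\nu)_{\nu<\lambda}$ in $(K,v)$ without a limit in $K$ having $x$ as a pseudo limit. The computation from the forward direction of part~1) verifies (\ref{A=B}) for this sequence, so part~1) concludes that {\bf A} and $(c_\nu)$ are associated.

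For part~3), starting from $(c_\nu)_{\nu<\lambda}$ without a limit in $K$, I would take the nest $(B_{\gamma_\nu}(c_\nu,K))_{\nu<\lambda}$ supplied by Lemma~\ref{pCs->nest} and invoke Lemma~\ref{uniqnest}(3) to enlarge it to a unique full nest {\bf A}, which has $\bigcap{\bf A}=\bigcap_\nu B_{\gamma_\nu}(c_\nu,K)=\emptyset$ by hypothesis and is therefore immediate; by construction it satisfies (\ref{A=B}), and part~1) delivers association. For uniqueness, any immediate ${\bf A}'$ associated with $(c_\nu)$ must satisfy ${\bf A}'_{\gamma_\nu}=B_{\gamma_\nu}(c_\nu,K)={\bf A}_{\gamma_\nu}$ and $\supp{\bf A}'=\supp{\bf A}$ by part~1); for any other $\gamma$ in this common support, cofinality provides some $\nu$ with $\gamma<\gamma_\nu$, so ${\bf A}'_\gamma\supseteq B_{\gamma_\nu}(c_\nu,K)\ni c_\nu$ forces ${\bf A}'_\gamma=B_\gamma(c_\nu,K)={\bf A}_\gamma$ by Lemma~\ref{ballbasic}; Lemma~\ref{suppnle} then yields ${\bf A}'={\bf A}$. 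The main obstacle I expect is the cofinality argument in part~1): crucially, it is the hypothesis that $(c_\nu)$ has no limit in $K$ that rules out stray elements of $\supp{\bf A}$ above all $\gamma_\nu$, which in turn is what allows the realization criterion of Lemma~\ref{x:realiat} to be applied with the natural cofinal family $\{\gamma_\nu\}$.
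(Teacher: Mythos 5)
Your argument is correct. Parts 1) and 3) follow the paper's own proof very closely: the same derivation of the cofinality of $\{\gamma_\nu\}$ in $\supp{\bf A}$ from the no-limit hypothesis, the same use of Lemma~\ref{x:realiat} and of condition (\ref{limitpCs}) to translate between realizing {\bf A} and being a pseudo limit, and the same passage to the unique full nest via Lemmas~\ref{pCs->nest} and~\ref{uniqnest} (you merely make the uniqueness in part 3) more explicit than the paper does). The one cosmetic difference in part 1) is that for ``associated $\Rightarrow$ (\ref{A=B})'' you produce the common witness $x$ by realizing {\bf A} via Theorems~\ref{KT2at}/\ref{KT3at}, whereas the paper produces a limit of the sequence via Kaplansky's Theorems~\ref{KT2}/\ref{KT3}; both are legitimate, and your choice has the small advantage of not invoking the classical theorems that the paper announces it will rederive from the approximation-type versions through this very proposition. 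The genuine divergence is in part 2): the paper constructs the associated pseudo Cauchy sequence directly by transfinite induction inside the nest, choosing $c_\nu\in{\bf A}_{\gamma_\nu}$ with the $\gamma_\nu$ strictly increasing and cofinal in $\supp{\bf A}$, so that (\ref{A=B}) holds by construction; you instead realize {\bf A} in an immediate extension and extract the sequence from Kaplansky's Theorem~\ref{KT1}. Your route is shorter but imports Theorem~\ref{KT1}, which the paper states without proof, while the paper's construction keeps the correspondence self-contained. (A minor mislabel: the computation verifying (\ref{A=B}) in your part 2) is the ultrametric-ball identification from the ``associated $\Rightarrow$ (\ref{A=B})'' half of part 1), not from the forward direction, but the computation itself is the same and correct.)
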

\begin{proof}
1): Assume first that (\ref{A=B}) holds.
Since $(c_\nu)_{\nu<\lambda}$ in $(K,v)$ has no limit in $K$, we know from 
Lemma~\ref{pCs->nest} that the intersection of the ultrametric balls 
$B_{\gamma_\nu}(c_\nu,K)$, $\nu<\lambda$, is empty; hence the same holds for the
intersection over the ${\bf A}_{\gamma_\nu}$. Consequently, ${\bf A}_\gamma=
\emptyset$ when $\gamma>\gamma_\nu$ for all $\nu<\lambda$. This shows that the 
set $\{\gamma_\nu\mid\nu<\lambda\}$ is cofinal in $\supp {\bf A}$, which implies
the last assertion of part 1). We also obtain that the assumption of 
Lemma~\ref{x:realiat} is satisfied. Hence an element $x$ in some extension 
of $(K,v)$ realizes {\bf A} if and only if for every 
$\nu<\lambda$ we have that $v(x-c)\geq \gamma_\nu$ for some $c\in 
{\bf A}_{\gamma_\nu}=B_{\gamma_\nu}(c_\nu,K)$, whence also $v(x-c_\nu)\geq 
\gamma_\nu\,$. By condition (\ref{limitpCs}), this holds if and only if $x$
is a limit of $(c_\nu)_{\nu<\lambda}$. We have proved that {\bf A} and 
$(c_\nu)_{\nu<\lambda}$ are associated.

Assume now that {\bf A} and $(c_\nu)_{\nu<\lambda}$ are associated. From
Theorems~\ref{KT2} and~\ref{KT3} we know that there is at least one limit $x$ of 
$(c_\nu)_{\nu<\lambda}$ in some extension $(L,v)$ of $(K,v)$. By condition
(\ref{limitpCs}), we have that $v(x-c_\nu)\geq\gamma_\nu$ for all $\nu<\lambda$.
As $x$ is supposed to also realize {\bf A}, we must have that ${\bf A}=
\appr_v(x,K)$. Hence ${\bf A}_\gamma=\appr_v (x,K)_\gamma =\{c\in K\mid v(x-c)\geq\gamma\}$ for each $\gamma\in\supp {\bf A}$. In particular, $c_\nu\in 
\appr_v (x,K)_{\gamma_\nu}$ for all $\nu<\lambda$. By the ultrametric triangle law, 
\[
c\in B_{\gamma_\nu}(c_\nu,K)\>\Leftrightarrow\> v(c-c_\nu)\geq \gamma_\nu
\>\Leftrightarrow\> v(x-c)\geq \gamma_\nu \>\Leftrightarrow\> 
c\in \appr_v (x,K)_{\gamma_\nu} = {\bf A}_{\gamma_\nu}\>.
\]
This shows that (\ref{A=B}) holds.

\sn
2): Take an immediate approximation type {\bf A} over $(K,v)$. By (possibly
transfinite) induction we construct a pseudo Cauchy sequence 
$(c_\nu)_{\nu<\lambda}\,$. Pick 
any $c_0\in K$. Assume that we have constructed the sequence up to some $c_\nu$ 
that lies in ${\bf A}_{\gamma_\nu}$ for some $\gamma_\nu\in\supp {\bf A}$. Since 
{\bf A} is immediate, there is some $\gamma_{\nu+1}\in \supp
{\bf A}$ such that $c_\nu\notin {\bf A}_{\gamma_{\nu+1}}\,$, and we pick any 
$c_{\nu+1}\in {\bf A}_{\gamma_{\nu+1}}\,$. Assume that $\kappa$ is a limit 
ordinal and we have chosen $c_\nu\in {\bf A}_{\gamma_\nu}\setminus
{\bf A}_{\gamma_{\nu+1}}$ for all $\nu<\kappa$. If the values $\gamma_\nu$, 
$\nu<\kappa$, are cofinal in $\supp {\bf A}$, we are done and set $\lambda=
\kappa$. Otherwise, there is some $\gamma_\kappa\in\supp {\bf A}$ larger than all
$\gamma_\nu$, and we pick some $c_\kappa\in {\bf A}_{\gamma_\kappa}$. This 
procedure must stop at a limit ordinal $\lambda$ as the cardinality of the set of
values $\gamma_\nu$ is bounded by the cardinality of $\supp {\bf A}$. When it stops 
it means that the $\gamma_\nu$ we have constructed are cofinal in $\supp {\bf A}$.

Assume that $\rho<\sigma<\tau<\lambda$. By construction, $c_\rho\notin 
{\bf A}_{\gamma_\sigma}$ (since ${\bf A}_{\gamma_\sigma}\subseteq 
{\bf A}_{\gamma_{\rho+1}}$),  $c_\sigma\in {\bf A}_{\gamma_\sigma}\,$, and 
$c_\tau\in {\bf A}_{\gamma_\tau}\subseteq {\bf A}_{\gamma_\sigma}\,$. We obtain 
that $v(c_\sigma-c_\rho)<\gamma_\sigma\leq v(c_\tau-c_\sigma)$.
This shows that $(c_\nu)_{\nu<\lambda}$ is a pseudo Cauchy sequence.

Since $c_\nu\in {\bf A}_{\gamma_\nu}$ by construction, it follows that 
${\bf A}_{\gamma_\nu}=B_{\gamma_\nu}(c_\nu,K)$.
Hence condition (\ref{A=B}) holds for all $\nu<\lambda$, so by part 1), {\bf A} and 
$(c_\nu)_{\nu<\lambda}$ are associated.

\sn
3): Take a pseudo Cauchy sequence $(c_\nu)_{\nu<\lambda}$ in $(K,v)$ without a 
limit in $K$. From Lemma~\ref{pCs->nest} we know that $\cN:=(B_{\gamma_\nu}
(c_\nu,K))_{\nu<\lambda}$ is a nest of balls in $K$ and that the intersection over 
all balls in this nest is empty. By part 3) of Lemma~\ref{uniqnest} there is a uniquely 
determined full nest $\cA$ containing $\cN$, which satisfies $\bigcap\cA=
\bigcap\cN$. Consequently, $\cA$ is an immediate approximation type, uniquely 
determined by the pseudo Cauchy sequence $(c_\nu)_{\nu<\lambda}\,$. Condition 
(\ref{A=B}) holds by definition of $\cN$.
\end{proof}

\pars
The following result deals with an approximation type analogue of Cauchy sequences. We will 
say that {\bf A} is a \bfind{completion type} over $(K,v)$ if it is an immediate 
approximation type over $(K,v)$ with $\supp{\bf A}=vK$.
\begin{proposition}                        \label{realimmat}
Assume that {\bf A} is a completion type over $(K,v)$.
Then there is a unique element $y$ in the completion of $(K,v)$ that realizes {\bf A}. 
If $(L|K,v)$ is an extension containing two distinct elements $y,z$ which both realize
{\bf A}, then $v(y-z)>vK$. 
\end{proposition}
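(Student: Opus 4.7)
The plan is to exploit the correspondence between immediate approximation types and pseudo Cauchy sequences established in Proposition~\ref{pcsvsat}, combined with the completeness of the completion of $(K,v)$.

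First I would apply Proposition~\ref{pcsvsat}(2) to obtain a pseudo Cauchy sequence $(c_\nu)_{\nu<\lambda}$ in $(K,v)$ that is associated with~${\bf A}$ and has no limit in $K$. By the last assertion of Proposition~\ref{pcsvsat}(1), the values $\gamma_\nu=v(c_{\nu+1}-c_\nu)$ generate the same initial segment of $vK$ as $\supp{\bf A}=vK$, so they are cofinal in $vK$. Since $v(c_\mu-c_\nu)=\gamma_\mu$ for $\mu<\nu$ by (\ref{gammanuf}), this upgrades the sequence from merely pseudo Cauchy to genuinely Cauchy in the valuation topology. It therefore admits a (unique) limit $y$ in the completion of $(K,v)$, and since this sequence is associated with~${\bf A}$, the element $y$ realizes~${\bf A}$.

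For the second assertion, I would take $y,z\in L$ both realizing~${\bf A}$. Then $\appr_v(y,K)={\bf A}=\appr_v(z,K)$ is immediate, so Corollary~\ref{v>L-imm} gives $v(y-z)\geq\supp\,\appr_v(y,K)=vK$. A crucial observation that I would invoke is that the support of any immediate approximation type has no maximal element, since otherwise the innermost closed ball would witness a nonempty intersection; hence $vK$ itself has no maximum. If $v(y-z)$ were to lie in $vK$, then choosing any $\gamma\in vK$ with $\gamma>v(y-z)$ would contradict $v(y-z)\geq\gamma$. Thus $v(y-z)\notin vK$, and together with $v(y-z)\geq vK$ this forces $v(y-z)>vK$.

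Uniqueness in the completion then follows immediately: the completion is an immediate extension of $(K,v)$, so its value group equals $vK$; if two distinct $y,z$ in the completion both realized~${\bf A}$, then $v(y-z)\in vK$, contradicting $v(y-z)>vK$. The only place requiring real input is the step that promotes pseudo Cauchyness to Cauchyness, and that is precisely where the hypothesis $\supp{\bf A}=vK$ is used; everything else is a direct application of results already established in Sections~\ref{sectiat} and~\ref{sectiatpCs}.
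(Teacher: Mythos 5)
Your proposal is correct and follows essentially the same route as the paper: produce an associated pseudo Cauchy sequence via Proposition~\ref{pcsvsat}, note that cofinality of the $\gamma_\nu$ in $\supp{\bf A}=vK$ makes it a genuine Cauchy sequence with a limit in the completion realizing ${\bf A}$, then get the last assertion from Corollary~\ref{v>L-imm} and deduce uniqueness from the completion being immediate. Your explicit upgrade from $v(y-z)\geq vK$ to $v(y-z)>vK$ (using that $\supp{\bf A}$ has no maximum) just spells out a step the paper leaves implicit.
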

\begin{proof}
By Proposition~\ref{pcsvsat} there is a pseudo Cauchy sequence that is associated with 
{\bf A} and for which the values $\gamma_\nu$ are cofinal in $\supp{\bf A}=vK$. This
sequence is hence a Cauchy sequence, and by the definition of the completion, has a 
limit in the completion. This limit realizes {\bf A}. 

The last assertion follows from Corollary~\ref{v>L-imm}, and since the value group of the completion is equal to $vK$, it in turn proves the uniqueness in the first assertion. 
\end{proof}

\mn
%
%
\subsection{Properties of arbitrary approximation types}            \label{sectatrev}
\mbox{ }\sn
In Section~\ref{sectiat} we have exhibited the relation between immediate approximation
types and extensions of valuations to simple field extensions. In this section, we 
will have a closer look at arbitrary approximation types {\bf A} over a 
fixed valued field $(K,v)$, in particular those that are not immediate. By definition, 
the latter means that $\bigcap {\bf A}$ is not empty. 

Throughout, we will assume that {\bf A} is non-trivial. We define: {\bf A} is called
\bfind{residue-immediate} if it satisfies
\begin{equation}                                       \label{atri}
{\bf A}_\gamma^\circ\ne\emptyset\;\mbox{ for every } \gamma\in\supp {\bf A}\>,
\end{equation}
and it is called \bfind{value-extending} if in addition $\bigcap {\bf A}\ne\emptyset$.  
Note that if $\gamma\in\supp {\bf A}$ such that ${\bf A}_\gamma^\circ=\emptyset$, then 
$\gamma$ is the maximal element of $\supp {\bf A}$ since if there were $\delta\in\supp
{\bf A}$ such that $\gamma<\delta$, then the nonempty set ${\bf A}_\delta$ would be
contained in ${\bf A}_\gamma^\circ$.

Observe that the empty approximation type is value-extending.

\pars
Further, {\bf A} is called \bfind{value-immediate} if it satisfies
\begin{equation}                                       \label{atvi}
\mbox{for every $c\in K$ there is $\gamma\in\supp {\bf A}$ such that } c\in
{\bf A}_\gamma\setminus {\bf A}_\gamma^\circ\>,
\end{equation}
and it is called \bfind{residue-extending} if in addition $\bigcap {\bf A}\ne
\emptyset$.

\begin{example}                            \label{exreve}
Choose any initial segment $S$ of $vK$. Then 
\[
\{B_\gamma (0,K)\,,\, B_\gamma^\circ (0,K) \mid \gamma\in S\}
\]
is a value-extending approximation type. For any $\gamma_0\in vK$,
\[
\{B_\gamma (0,K)\mid \gamma\leq\gamma_0\} \cup
\{B_\gamma^\circ (0,K) \mid \gamma<\gamma_0\}
\]
is a residue-extending approximation type. If we adjoin $B_{\gamma_0}^\circ (0,K)$, then 
we obtain a value-extending approximation type. We will see that the value-extending 
approximation type describes an extension where $vx\notin vK$ induces a cut in $vK$ with 
lower cut set $S$, which is $\{\gamma\in vK\mid \gamma\leq\gamma_0\}$ in the last 
example. In contrast, the residue-extending approximation type describes an extension 
where $vx=\gamma_0\in 
vK$ and for any $d\in K$ such that $vdx=0$ we have that $dxv\notin Kv$. We see that the
difference between the two cases is revealed by an open ball; this is the reason why we 
take approximation types to contain both closed and open balls.
\end{example}

\begin{lemma}                                     \label{niatl}
1) Condition (\ref{atri}) holds if and only if ${\bf A}=\emptyset$ or
\begin{equation}                       \label{atrieq}
\bigcap{\bf A}\>=\> \bigcap_{\displaystyle\gamma\in\supp {\bf A}} {\bf A}_\gamma^\circ\>.
\end{equation}
\sn
2) {\bf A} is value-extending if and only if ${\bf A}=\emptyset$ or
\begin{equation}                       \label{charve}
\bigcap_{\displaystyle\gamma\in\supp{\bf A}} {\bf A}_\gamma^\circ\>\ne\>\emptyset\>.
\end{equation}
\sn
3) If $c\notin {\bf A}_\delta^\circ$ for some $\delta\in\supp {\bf A}$, then 
there is $\gamma\in\supp {\bf A}$ such that $c\in {\bf A}_\gamma\setminus 
{\bf A}_\gamma^\circ$. 
\sn
4) {\bf A} is residue-extending if and only if there is $\delta\in\supp{\bf A}$ such 
that $\bigcap {\bf A}={\bf A}_\delta$ and ${\bf A}_\delta^\circ=\emptyset$. If this 
is the case, then $\delta$ is the maximal element of $\supp{\bf A}$.
\sn
5) Every non-trivial approximation type is immediate, value-extending or 
residue-extending. These three properties 
are mutually exclusive.
\end{lemma}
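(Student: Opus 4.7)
My plan is to treat parts~1 and~2 as direct set-theoretic consequences of the ball structure, handle part~3 by a case distinction on whether $c\in{\bf A}_\delta$, and then derive parts~4 and~5 by combining part~3 with a dichotomy on whether $\supp{\bf A}$ has a maximum element.

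For part~1, I would note that ${\bf A}$ consists precisely of the closed balls ${\bf A}_\gamma$ and the nonempty open balls ${\bf A}_\gamma^\circ$ for $\gamma\in\supp{\bf A}$, and since ${\bf A}_\gamma^\circ\subseteq{\bf A}_\gamma$, equation~(\ref{atrieq}) is immediate whenever (\ref{atri}) holds. For the converse I argue contrapositively: if ${\bf A}_{\gamma_0}^\circ=\emptyset$ for some $\gamma_0\in\supp{\bf A}$, then $\gamma_0$ must be the maximum of $\supp{\bf A}$ (any larger $\beta$ would give the contradiction $\emptyset\ne{\bf A}_\beta\subseteq{\bf A}_{\gamma_0}^\circ$), so $\bigcap{\bf A}={\bf A}_{\gamma_0}\ne\emptyset$ while $\bigcap_\gamma{\bf A}_\gamma^\circ$ contains the empty factor ${\bf A}_{\gamma_0}^\circ$ and is therefore empty. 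Part~2 then falls out immediately: under~(\ref{atri}) we have $\bigcap{\bf A}=\bigcap_\gamma{\bf A}_\gamma^\circ$ by part~1, while nonemptiness of the latter already forces~(\ref{atri}) since any empty factor would collapse the intersection.

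For part~3, the case $c\in{\bf A}_\delta$ is trivial with $\gamma:=\delta$. If $c\notin{\bf A}_\delta$, I fix any $c_\delta\in{\bf A}_\delta$, set $\gamma:=v(c-c_\delta)<\delta$, and note that $\gamma$ lies in $\supp{\bf A}$ because this is an initial segment of $vK\infty$. Then ${\bf A}_\gamma=B_\gamma(c_\delta,K)$ contains $c$; the delicate point is to exclude $c$ from ${\bf A}_\gamma^\circ$ when the latter is nonempty. For this I would invoke the nest property together with Lemma~\ref{ballbasic} to conclude that ${\bf A}_\delta$ and ${\bf A}_\gamma^\circ$ share a center, and since $\delta>\gamma$ this forces ${\bf A}_\delta\subseteq{\bf A}_\gamma^\circ$; thus $c_\delta\in{\bf A}_\gamma^\circ$, so ${\bf A}_\gamma^\circ=B_\gamma^\circ(c_\delta,K)$, which excludes $c$ since $v(c-c_\delta)=\gamma$ exactly.

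Part~4 combines part~3 with the previous observations: for the forward direction, any $c\in\bigcap{\bf A}$ must lie in every nonempty ${\bf A}_\gamma^\circ\in{\bf A}$, so the $\gamma$ provided by~(\ref{atvi}) must satisfy ${\bf A}_\gamma^\circ=\emptyset$, which forces $\gamma$ to be the maximum of $\supp{\bf A}$ and yields $\bigcap{\bf A}={\bf A}_\gamma$; the reverse direction handles $c\in{\bf A}_\delta$ directly and applies part~3 to any $c\notin{\bf A}_\delta$. For part~5 I split on whether $\supp{\bf A}$ has a maximum: if not, then~(\ref{atri}) is automatic and ${\bf A}$ is immediate or value-extending according as $\bigcap{\bf A}$ is empty or not, while residue-extending is ruled out since it requires a maximum; if $\supp{\bf A}$ has maximum $\delta$, then $\bigcap{\bf A}$ equals ${\bf A}_\delta^\circ$ or ${\bf A}_\delta$ according as ${\bf A}_\delta^\circ$ is nonempty or empty, giving value-extending or residue-extending respectively. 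Mutual exclusivity is then clear: immediate has empty intersection whereas the other two do not, and value- and residue-extending disagree on whether ${\bf A}_\delta^\circ$ is empty at the maximum $\delta$. The only place where care is needed is the inclusion ${\bf A}_\delta\subseteq{\bf A}_\gamma^\circ$ in part~3, as this requires the sharing-of-centers consequence of Lemma~\ref{ballbasic} to rule out the possibility that $v(c-c_\delta)=\gamma$ is upgraded to a strict inequality when computed relative to some other center of ${\bf A}_\gamma^\circ$.
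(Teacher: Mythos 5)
Your proof is correct and follows essentially the same route as the paper's: the forward direction of 1) via ${\bf A}_\gamma^\circ\subseteq{\bf A}_\gamma$, the converse via the observation that ${\bf A}_{\gamma_0}^\circ=\emptyset$ forces $\gamma_0=\max\supp{\bf A}$, part 3) by the same case split on $c\in{\bf A}_\delta$ with the same center-sharing argument from Lemma~\ref{ballbasic} and the fullness of the nest, and parts 4) and 5) by the same combination of part 3) with the maximality observation (your reorganization of 5) around whether $\supp{\bf A}$ has a maximum, and your slightly more direct forward direction in 4), are only cosmetic variants). You also correctly identify the one delicate point, namely that ${\bf A}_\gamma^\circ$, when nonempty, must equal $B_\gamma^\circ(c_\delta,K)$.
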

\begin{proof}
Our assertions are trivial if ${\bf A}=\emptyset$, so we may assume that ${\bf A}
\ne\emptyset$.
\sn
1): The inclusion $\supseteq$ in (\ref{atrieq}) always holds. If condition 
(\ref{atri}) holds, then ${\bf A}_\gamma^\circ\ne\emptyset$ and thus
$\bigcap {\bf A}\subseteq {\bf A}_\gamma^\circ$ for all
$\gamma\in\supp{\bf A}$, hence the inclusion $\subseteq$ also holds in (\ref{atrieq}).
If condition (\ref{atri}) does not hold, then there is $\delta\in\supp {\bf A}$ such 
that ${\bf A}_\delta^\circ=\emptyset$. As ${\bf A}_\delta^\circ$ contains 
${\bf A}_\epsilon$ for every $\epsilon>\delta$, we then have that ${\bf A}_\epsilon
=\emptyset$, whence $\delta$ is the maximal element of $\supp{\bf A}$ and
$\bigcap{\bf A}={\bf A}_\delta\ne\emptyset=
\bigcap_{\gamma\in{\rm supp \bf A}} {\bf A}_\gamma^\circ$.
\sn
2): If {\bf A} is value-extending, then by part 1) of our lemma, 
\[
\bigcap_{\gamma\in {\rm supp\bf A}} {\bf A}_\gamma^\circ\>=\>\bigcap {\bf A}
\>\ne\>\emptyset\>.
\]
On the other hand, if (\ref{charve}) holds, then also (\ref{atrieq}) holds. Then by 
part 1) of our lemma, (\ref{atri}) holds, showing that {\bf A} is value-extending.
\sn
3): Take $c\in K$ and assume that $c\notin {\bf A}_\delta^\circ$ for some $\delta\in
\supp {\bf A}$. If $c\in {\bf A}_\delta$, then our assertion holds for $\gamma=
\delta$. Now assume that $c\notin {\bf A}_\delta$, pick $d\in {\bf A}_\delta$ and set
$\gamma=v(d-c)$. As $c\notin {\bf A}_\delta\,$, we have that $\delta>\gamma$ and 
therefore, ${\bf A}_\gamma=B_\gamma(d,K)$ and ${\bf A}_\gamma^\circ=B_\gamma^\circ
(d,K)$. Since $v(d-c)=\gamma$, $c\in {\bf A}_\gamma\setminus {\bf A}_\gamma^\circ$.
This proves part 3). 
\sn
4): Assume that {\bf A} is residue-extending, pick any $b\in\bigcap {\bf A}$ and $\delta
\in\supp {\bf A}$ such that $b\in {\bf A}_\delta\setminus {\bf A}_\delta^\circ$. Then $b 
\notin {\bf A}_\varepsilon$ for any $\varepsilon\in vK$ with $\varepsilon>\delta$ 
since ${\bf A}_\varepsilon\subseteq {\bf A}_\delta^\circ$. As $b\in\bigcap {\bf A}$, this 
shows that ${\bf A}_\varepsilon=\emptyset$, hence $\delta$ is the maximal element of 
$\supp{\bf A}$. It follows from this together with (\ref{atvi}) that for 
every $c\in {\bf A}_\delta$, we must have that $c\notin {\bf A}_\delta^\circ\,$, which 
proves that ${\bf A}_\delta^\circ=\emptyset$. 

For the converse, assume that there is $\delta\in\supp{\bf A}$ such 
that $\bigcap {\bf A}={\bf A}_\delta$ and ${\bf A}_\delta^\circ=\emptyset$. Since
$\delta\in\supp{\bf A}$, we have that $\bigcap {\bf A}\ne\emptyset$. Pick any $c\in 
K$. Since $c\notin \emptyset={\bf A}_\delta^\circ$, by part 3) there is 
$\gamma\in\supp {\bf A}$ such that $c\in {\bf A}_\gamma\setminus {\bf A}_\gamma
^\circ$, showing that {\bf A} is residue-extending. This finishes the proof of 
part 4).

\sn
5): Take a non-trivial approximation type {\bf A} which is not immediate, i.e., 
$\bigcap {\bf A}\ne\emptyset$. If {\bf A} is also not value-extending, then
there is $\delta\in\supp{\bf A}$ such that ${\bf A}_\delta^\circ=\emptyset$. 
By part 4), this implies that {\bf A} is
residue-extending.

Now we prove the second assertion of part 5).
If {\bf A} is immediate, then $\bigcap{\bf A}=\emptyset$, so {\bf A} can neither be
value-extending nor residue-extending. If {\bf A} is residue-extending, then by 
part 4) there is $\delta\in\supp{\bf A}$ such that $\bigcap {\bf A}=
{\bf A}_\delta$ and ${\bf A}_\delta^\circ=\emptyset$, hence for this $\gamma=\delta$, 
condition (\ref{atri}) is violated. This shows that {\bf A} cannot be 
value-extending.
\end{proof}

\begin{remark}                             
If {\bf A} is residue-extending, then by part 4) of the previous lemma, $\supp {\bf A}$
admits a maximal element $\delta$, and ${\bf A}_\delta^\circ=\emptyset$. Thus 
${\bf A}_\delta$ is the smallest ball in {\bf A}, and {\bf A} is generated by 
${\bf A}_\delta$.

If {\bf A} is value-extending and if $\supp {\bf A}$ admits a maximal element $\delta$, 
then ${\bf A}_\delta^\circ\ne\emptyset$ is the smallest ball in {\bf A} and {\bf A} is
generated by ${\bf A}_\delta^\circ$. However, not every value-extending approximation
type {\bf A} is such that $\supp {\bf A}$ admits a maximal element, in which case it is 
not generated by a single ball.
\end{remark}

\begin{lemma}                               \label{x:realat} 
Take an approximation type {\bf A} over $(K,v)$, an extension 
$(L|K,v)$, and an element $x\in L$. 
\sn
1) Assume that {\bf A} is value-extending and pick some $b\in\bigcap{\bf A}$. Then $x$
realizes {\bf A} if and only if $v(x-b)>\gamma$ for all $\gamma\in\supp{\bf A}$ and 
$v(x-c)<\varepsilon$ for all $\varepsilon\in vK\setminus\supp{\bf A}$ and all $c\in K$.
\sn
2) Assume that {\bf A} is residue-extending and pick some $b\in\bigcap{\bf A}$.
Then $x$ realizes {\bf A} if and only if
$v(x-b)=\max\supp{\bf A}$ and $v(x-c)\leq v(x-b)$ for all $c\in K$.
\end{lemma}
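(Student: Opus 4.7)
The plan is to use the structural results of Lemma~\ref{niatl} to replace $\mathbf{A}_\gamma$ and $\mathbf{A}_\gamma^\circ$ by explicit balls centered at $b$, reducing both statements to routine bookkeeping via Lemma~\ref{ballbasic}.

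For part 1), since $\mathbf{A}$ is value-extending, parts 1) and 2) of Lemma~\ref{niatl} give $b\in\bigcap\mathbf{A}=\bigcap_{\gamma\in\supp\mathbf{A}}\mathbf{A}_\gamma^\circ$, so by Lemma~\ref{ballbasic} we have $\mathbf{A}_\gamma=B_\gamma(b,K)$ and $\mathbf{A}_\gamma^\circ=B_\gamma^\circ(b,K)$ for every $\gamma\in\supp\mathbf{A}$. For the forward direction, if $\mathbf{A}=\appr_v(x,K)$ then $b\in\appr_v(x,K)_\gamma^\circ$ gives $v(x-b)>\gamma$; and since $\supp\mathbf{A}$ is an initial segment of $vK$, any $\varepsilon\in vK\setminus\supp\mathbf{A}$ satisfies $\varepsilon>\supp\mathbf{A}$, so $\appr_v(x,K)_\varepsilon=\emptyset$ and thus $v(x-c)<\varepsilon$ for every $c\in K$. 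For the converse I check that the supports and the balls at each $\gamma$ coincide: $v(x-b)>\gamma$ places $b$ in $\appr_v(x,K)_\gamma^\circ$, which by Lemma~\ref{ballbasic} therefore equals $B_\gamma^\circ(b,K)=\mathbf{A}_\gamma^\circ$ (and similarly for the closed balls), while for $\varepsilon\in vK\setminus\supp\mathbf{A}$ the second hypothesis forces $\appr_v(x,K)_\varepsilon=\emptyset=\mathbf{A}_\varepsilon$. Finally I must exclude $\infty\in\supp\appr_v(x,K)$, i.e., $x\in K$: if $x\in K$, then setting $c=x$ yields $v(x-c)=\infty$, which violates $v(x-c)<\varepsilon$ for any $\varepsilon\in vK\setminus\supp\mathbf{A}$.

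For part 2), part 4) of Lemma~\ref{niatl} gives $\delta:=\max\supp\mathbf{A}$ together with $\bigcap\mathbf{A}=\mathbf{A}_\delta$ and $\mathbf{A}_\delta^\circ=\emptyset$, so $b\in\mathbf{A}_\delta=B_\delta(b,K)$ and for each $\gamma\in\supp\mathbf{A}$ with $\gamma<\delta$ also $\mathbf{A}_\gamma=B_\gamma(b,K)$ and $\mathbf{A}_\gamma^\circ=B_\gamma^\circ(b,K)$. In the forward direction, $b\in\mathbf{A}_\delta$ yields $v(x-b)\geq\delta$, while $\appr_v(x,K)_\delta^\circ=\mathbf{A}_\delta^\circ=\emptyset$ forces $v(x-b)\leq\delta$, whence $v(x-b)=\delta$; moreover any $c\in K$ with $v(x-c)>\delta$ would lie in the empty set $\appr_v(x,K)_\delta^\circ$. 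In the converse, $v(x-b)=\delta\geq\gamma$ places $b$ in $\appr_v(x,K)_\gamma$ (and, when $\gamma<\delta$, in $\appr_v(x,K)_\gamma^\circ$), so Lemma~\ref{ballbasic} identifies these balls with $\mathbf{A}_\gamma$ and $\mathbf{A}_\gamma^\circ$; at $\gamma=\delta$ the upper bound $v(x-c)\leq\delta$ gives $\appr_v(x,K)_\delta^\circ=\emptyset=\mathbf{A}_\delta^\circ$; and for $\gamma>\delta$ both sides are empty.

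The main obstacle is the boundary case in part 1) when $\supp\mathbf{A}=vK$: then $vK\setminus\supp\mathbf{A}=\emptyset$ so the second hypothesis is vacuous, and the condition $v(x-b)>\gamma$ for all $\gamma\in vK$ is still compatible with $x=b$, in which case $\appr_v(x,K)$ would be trivial and hence distinct from the non-trivial value-extending $\mathbf{A}$. I expect this is dealt with either by an implicit hypothesis $x\notin K$ (consistent with how the lemma is meant to be applied to realizations in proper extensions), or by a short supplementary argument noting that value-extending types are never trivial so $\infty\notin\supp\mathbf{A}$, and then analysing what $v(x-b)$ forces in $vL$.
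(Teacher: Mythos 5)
Your proof is correct and follows essentially the same route as the paper's: identify the balls of ${\bf A}$ with balls centered at $b$ via Lemma~\ref{ballbasic} and the structural facts of Lemma~\ref{niatl}, match the supports, and conclude via Lemma~\ref{ateq}. The boundary case you flag in part 1) ($\supp{\bf A}=vK$ together with $x\in K$, e.g.\ $x=b$) is genuine, but it is glossed over in the paper's own proof as well, which asserts that the second condition is equivalent to $\supp\,\appr_v(x,K)\subseteq\supp{\bf A}$ without excluding $\infty\in\supp\,\appr_v(x,K)$; in every application of the lemma in the paper $x$ is transcendental over $K$, so the issue never actually arises.
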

\begin{proof}
1): The condition ``$v(x-c)<\varepsilon$ for all $\varepsilon\in vK\setminus\supp{\bf A}$ and 
all $c\in K$'' is equivalent to $\supp\,\appr_v(x,K)\subseteq\supp{\bf A}$. Hence it holds
when $\appr_v(x,K)={\bf A}$, and this equality also implies that for all $\gamma\in
\supp{\bf A}$ we have that $v(x-b)>\gamma$. 

For the converse, assume that $v(x-b)>\gamma$ for all $\gamma\in\supp{\bf A}$. Then
for all $\gamma\in\supp{\bf A}$, $b\in \appr_v(x,K)_\gamma^\circ$ and thus also 
$b\in \appr_v(x,K)_\gamma$, which implies that $\appr_v(x,K)_\gamma^\circ=
B_\gamma^\circ(b,K)={\bf A}_\gamma^\circ$ and $\appr_v(x,K)_\gamma=B_\gamma(b,K)=
{\bf A}_\gamma$, and moreover, $\supp{\bf A}\subseteq\supp\,\appr_v(x,K)$. By what we 
have shown in the beginning, assuming also the condition ``$v(x-c)<\varepsilon$ for all 
$\varepsilon\in vK\setminus\supp{\bf A}$ and $c\in K$'' yields the reverse inclusion and thus 
equality of the supports. From Lemma~\ref{ateq} we now infer that $\appr_v(x,K)={\bf A}$.

\pars
2): Since {\bf A} is residue-extending, we know from part 4) of Lemma~\ref{niatl} that
for $\delta=\max\supp{\bf A}$, $b\in {\bf A}_\delta$ and ${\bf A}_\delta^\circ=\emptyset$.
Hence if $\appr_v(x,K)={\bf A}$, then $v(x-b)=\delta=\max\supp{\bf A}$ and $v(x-c)\leq 
\delta=v(x-b)$ for all $c\in K$. For the converse, assume that the latter holds. Then 
$\appr_v(x,K)_\delta^\circ=\emptyset={\bf A}_\delta^\circ$, but $b\in\appr_v(x,K)_\gamma$
for all $\gamma\leq\delta$ and therefore, $b\in\appr_v(x,K)_\gamma^\circ$ for all 
$\gamma<\delta$. It follows that $\supp\,\appr_v(x,K)=\supp{\bf A}$, $\appr_v(x,K)_\gamma
=B_\gamma(b,K)={\bf A}_\gamma$ for all $\gamma\leq\delta$, and $\appr_v(x,K)_\gamma^\circ=
B_\gamma^\circ(b,K)={\bf A}_\gamma^\circ$ for all $\gamma<\delta$. From Lemma~\ref{ateq} 
we obtain that $\appr_v(x,K)={\bf A}$.
\end{proof}

\pars
The following results justify the names ``value-extending'' and ``residue-extending''.
\begin{lemma}                      \label{charvere}
Take any extension $(K(x)|K,v)$ and set ${\bf A}:=\appr_v(x,K)$. Then the following 
assertions hold:
\sn
1) The approximation type {\bf A} is value-extending if and only if there is 
$b\in K$ such that $v(x-b)\notin vK$. If this is the case, then
\begin{equation}                \label{suppve}
\supp{\bf A}\>=\> v(x-K)\cap vK \>=\> \{\gamma\in vK\mid \gamma<v(x-b)\}\>,
\end{equation}
and $\bigcap{\bf A}$ is the set of all elements $b\in K$ for which $v(x-b)\notin vK$,
or equivalently, $v(x-b)$
realizes the cut $(\supp{\bf A}\,,\,vK\setminus\supp{\bf A})$.

\sn
2) The approximation type {\bf A} is residue-extending if and only if there are 
$b,d\in K$ such that $vd(x-b)=0$ and $d(x-b)v\notin Kv$. If this is the case, then
\begin{equation}                \label{suppre}
\supp{\bf A}\>=\> v(x-K)\>=\> \{\gamma\in vK\mid \gamma\leq v(x-b)\}\>,
\end{equation}
and 
$\bigcap{\bf A}$
is the set of all $b\in K$ for which $v(x-b)=\max\supp{\bf A}$,
or equivalently, $d(x-b)v\notin Kv$ for any $d\in K$ with $vd=-\max\supp{\bf A}$.
\end{lemma}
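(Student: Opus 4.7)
The plan is to reduce both parts to the structural dichotomy already supplied by Lemma~\ref{niatl} (in particular its parts 1), 2), and 4)) together with Lemma~\ref{maxapp} (which describes the shape of $v(x-K)$) and Lemma~\ref{suppvx-K} (which identifies $\supp{\bf A}$ with $v(x-K)\cap vK$). The key observation is that the distinction between ``value-extending'' and ``residue-extending'' simply records whether $\max v(x-K)$ lies outside $vK$ or inside $vK$.

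For part 1), I would argue as follows. If $b\in K$ with $v(x-b)\notin vK$, then Lemma~\ref{maxapp}(3) immediately gives that $v(x-b)=\max v(x-K)$ and $v(x-K)\cap vK=\{\gamma\in vK\mid \gamma<v(x-b)\}$; combined with Lemma~\ref{suppvx-K} this yields (\ref{suppve}). The strict inequality $v(x-b)>\gamma$ for every $\gamma\in\supp{\bf A}$ shows $b\in {\bf A}_\gamma^\circ$, so ${\bf A}_\gamma^\circ\ne\emptyset$ and $b\in\bigcap{\bf A}$, proving that ${\bf A}$ is value-extending. Conversely, if ${\bf A}$ is value-extending, pick any $b\in\bigcap {\bf A}$; by Lemma~\ref{niatl}(1), $b\in {\bf A}_\gamma^\circ$ for every $\gamma\in\supp{\bf A}$, so $v(x-b)>\supp{\bf A}$. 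If $v(x-b)$ were in $vK$ it would belong to $v(x-K)\cap vK=\supp{\bf A}$, contradiction; hence $v(x-b)\notin vK$, and applying Lemma~\ref{maxapp}(3) again yields (\ref{suppve}) and pins down $\bigcap{\bf A}$ as exactly those $b$ for which $v(x-b)$ realizes the cut $(\supp{\bf A}, vK\setminus\supp{\bf A})$.

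Part 2) will be handled by a parallel strategy, now using Lemma~\ref{niatl}(4) to encode the residue-extending condition as: $\supp{\bf A}$ has a maximum $\delta$ with ${\bf A}_\delta^\circ=\emptyset$ and $\bigcap{\bf A}={\bf A}_\delta$. Given such $b,d$ with $vd(x-b)=0$ and $d(x-b)v\notin Kv$, Lemma~\ref{approx} (applied contrapositively, equivalently Lemma~\ref{maxapp}(5)) says that no $c'\in K$ satisfies $v(x-c')>v(x-b)$; thus $\delta:=v(x-b)=-vd\in vK$ is the maximum of $v(x-K)$, which by Lemma~\ref{maxapp}(3) forces $v(x-K)\subseteq vK$, so $\supp{\bf A}=v(x-K)=\{\gamma\in vK\mid\gamma\leq\delta\}$ by Lemma~\ref{maxapp}(1). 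Then ${\bf A}_\delta^\circ=\emptyset$ and $\bigcap{\bf A}={\bf A}_\delta$, so ${\bf A}$ is residue-extending by Lemma~\ref{niatl}(4). Conversely, from that same lemma pick $\delta=\max\supp{\bf A}$ with ${\bf A}_\delta^\circ=\emptyset$; any $b\in {\bf A}_\delta$ satisfies $v(x-b)=\delta$ (any strict inequality would put $b$ in the empty set ${\bf A}_\delta^\circ$), and for any $d$ with $vd=-\delta$ we have $vd(x-b)=0$; the absence of $c'$ with $v(x-c')>\delta$ combined with Lemma~\ref{approx} forces $d(x-b)v\notin Kv$. The characterization of $\bigcap{\bf A}$ then reads off, and the equivalence of the two descriptions of $\bigcap{\bf A}$ (via $v(x-b)=\max\supp{\bf A}$ or via $d(x-b)v\notin Kv$) follows again from Lemma~\ref{approx}.

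The only real bookkeeping issue will be the translation in part 2) between ``$v(x-b)$ is the maximum of $\supp{\bf A}$'' and ``$d(x-b)v\notin Kv$'', since this passes through both Lemma~\ref{approx} and part 5) of Lemma~\ref{maxapp}; I would be careful to verify that the equivalence continues to hold uniformly in $d$, which is exactly the content of the last sentence of Lemma~\ref{approx}. Everything else reduces to straightforward inclusions of ultrametric balls together with the already established initial-segment structure of $v(x-K)\cap vK$.
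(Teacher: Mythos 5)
Your proposal is correct and follows essentially the same route as the paper: both directions of each part rest on Lemma~\ref{niatl} (parts 1), 2) and 4)), Lemma~\ref{maxapp} (especially part 5)) and the uniformity-in-$d$ statement of Lemma~\ref{approx}. The only difference is cosmetic: where the paper re-derives the support identity (\ref{suppve}) by a direct contradiction argument with an element of ${\bf A}_\delta$ for $\delta>v(x-b)$, you obtain it by citing Lemma~\ref{maxapp}(3) together with Lemma~\ref{suppvx-K}, which is a legitimate shortcut.
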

\begin{proof}
1): Assume that {\bf A} is value-extending and pick $b\in \bigcap{\bf A}$. Suppose that 
$v(x-b)\in vK$. Then for $\gamma=v(x-b)$ we would have that $b\in {\bf A}_\gamma\setminus 
{\bf A}_\gamma^\circ$. However, by (\ref{atri}), ${\bf A}_\gamma^\circ
\ne\emptyset$, but $b\notin{\bf A}_\gamma^\circ$, which contradicts our choice of 
$b\in\bigcap{\bf A}$. 

To prove the converse, assume that there is $b\in K$ such that $v(x-b)\notin vK$. We
have that $b\in {\bf A}_\gamma$ and $b\in {\bf A}_\gamma^\circ$ for every 
$\gamma\in vK$ with $\gamma<v(x-b)$. Suppose that 
$\delta>v(x-b)$ and $d\in {\bf A}_\delta$. Then $v(x-d)\geq\delta>v(x-b)$,
whence $v(d-b)=v(x-b)\notin vK$, which is a contradiction, showing that 
${\bf A}_\delta=\emptyset$ and $\delta\notin\supp{\bf A}$. 
Consequently, $b\in\bigcap{\bf A}$ and 
(\ref{suppve}) holds. For $\gamma\in\supp{\bf A}$ we have that $\gamma<v(x-b)$,
hence $b\in {\bf A}_\gamma^\circ$ (and also $b\in {\bf A}_\gamma$ since
$b\in {\bf A}_\gamma^\circ\subseteq {\bf A}_\gamma$), 
so we have that ${\bf A}_\gamma^\circ\ne\emptyset$.
We have now proved that ${\bf A}$ is value-extending.

It remains to prove the last assertion of part 1). We have already shown that  
$b\in \bigcap {\bf A}$ if and only if $v(x-b)\notin vK$, and that the former implies 
(\ref{suppve}). This means that $\supp {\bf A}$ is the lower cut set of the cut induced by
$v(x-b)$ in $vK$, i.e., $v(x-b)$ realizes the cut $(\supp{\bf A}\,,\,vK\setminus\supp
{\bf A})$. Conversely, if this is true, then $v(x-b)\notin vK$. This finishes the proof of 
part 1).

\pars
2): Assume that {\bf A} is residue-extending. By part 4) of Lemma~\ref{niatl}
we may pick $\delta\in\supp {\bf A}$ such that $\bigcap {\bf A}=
{\bf A}_\delta$ and ${\bf A}_\delta^\circ=\emptyset$. Further, we pick $b\in
\bigcap {\bf A}$. Since $b\notin\emptyset={\bf A}_\delta^\circ$, we have that 
$v(x-b)=\delta\in vK$, so that we can pick $d\in K$ such that $vd=-\delta$. 
Then $vd(x-b)=0$ and from part 5) of Lemma~\ref{maxapp} we infer that $d(x-b)v\notin Kv$.

To prove the converse, assume that there are $b,d\in K$ such that $vd(x-b)=0$ and 
$d(x-b)v\notin Kv$. Then by part 5) of Lemma~\ref{maxapp}, $\delta:=v(x-b)=\max v(x-K)\in 
vK$. It follows that ${\bf A}_\delta^\circ=\emptyset$, $\bigcap {\bf A}=
{\bf A}_\delta\ne\emptyset$, and from part 4) of Lemma~\ref{niatl} we obtain that 
{\bf A} is residue-extending. It also follows that (\ref{suppre}) holds, which means that 
$v(x-b)=\max\supp{\bf A}$.

It remains to prove the last assertion of part 2). We have already shown that for any 
$b\in \bigcap {\bf A}$ we have that $v(x-b)=\max\supp{\bf A}$ and if $d\in K$ with 
$-vd=v(x-b)=\max\supp{\bf A}$, then $d(x-b)v\notin Kv$. We have also shown that the latter
implies that $v(x-b)=\max\supp{\bf A}$. Hence if $\delta=\max\supp{\bf A}$, then $b\in 
{\bf A}_\delta\,$. As {\bf A} is residue-extending, we know from part 4) of 
Lemma~\ref{niatl} that ${\bf A}_\delta^\circ=\emptyset$, which shows that $b\in\bigcap
{\bf A}$.
%
\end{proof}

For the sake of completeness, we include the following results.
\begin{proposition}                                     \label{niat1}
Take a non-trivial approximation type {\bf A} over a valued field $(K,v)$. Then the 
following assertions hold.
\sn
1) {\bf A} is immediate if and only if it is value-immediate and residue-immediate.
\sn
2) {\bf A} is value-immediate or residue-immediate.
\end{proposition}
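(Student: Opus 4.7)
The plan is to prove both parts by systematic application of Lemma~\ref{niatl}, especially the characterization of residue-immediate via formula (\ref{atrieq}) (part 1), the ``balls lie in $\bigcap{\bf A}$'' criterion from part 3, and the observation that if ${\bf A}_\gamma^\circ=\emptyset$ for some $\gamma\in\supp{\bf A}$, then $\gamma$ is forced to be the maximum of $\supp{\bf A}$.

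For the forward direction of part~1, I assume $\bigcap{\bf A}=\emptyset$. To get residue-immediate, I suppose for contradiction that some $\gamma\in\supp{\bf A}$ has ${\bf A}_\gamma^\circ=\emptyset$. Since any $\delta>\gamma$ in $vK$ would satisfy ${\bf A}_\delta\subseteq{\bf A}_\gamma^\circ=\emptyset$, this forces $\gamma=\max\supp{\bf A}$ and ${\bf A}_\gamma$ to be the smallest ball, whence $\bigcap{\bf A}={\bf A}_\gamma\ne\emptyset$, a contradiction. Having established residue-immediate, the identity (\ref{atrieq}) from part~1 of Lemma~\ref{niatl} gives $\bigcap_{\gamma\in\supp{\bf A}}{\bf A}_\gamma^\circ=\emptyset$, so for every $c\in K$ there is some $\delta\in\supp{\bf A}$ with $c\notin{\bf A}_\delta^\circ$; part~3 of Lemma~\ref{niatl} then supplies $\gamma\in\supp{\bf A}$ with $c\in{\bf A}_\gamma\setminus{\bf A}_\gamma^\circ$, which is value-immediate.

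For the reverse direction of part~1, I assume both value-immediate and residue-immediate, and suppose for contradiction there is some $c\in\bigcap{\bf A}$. Residue-immediacy and (\ref{atrieq}) imply $c\in\bigcap_{\gamma\in\supp{\bf A}}{\bf A}_\gamma^\circ$, so $c\in{\bf A}_\gamma^\circ$ for every $\gamma\in\supp{\bf A}$. But value-immediacy produces a $\gamma\in\supp{\bf A}$ with $c\in{\bf A}_\gamma\setminus{\bf A}_\gamma^\circ$, contradicting $c\in{\bf A}_\gamma^\circ$.

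For part~2, I suppose {\bf A} is not residue-immediate and show {\bf A} must be value-immediate. The failure of residue-immediacy gives a $\delta\in\supp{\bf A}$ with ${\bf A}_\delta^\circ=\emptyset$. Then every $c\in K$ trivially satisfies $c\notin{\bf A}_\delta^\circ$, so part~3 of Lemma~\ref{niatl} applied to each $c\in K$ directly yields $\gamma\in\supp{\bf A}$ with $c\in{\bf A}_\gamma\setminus{\bf A}_\gamma^\circ$, which is exactly the definition of value-immediate. No real obstacle arises; the work has already been done in Lemma~\ref{niatl}, and the proposition is essentially a bookkeeping synthesis of parts 1, 3 and 4 of that lemma.
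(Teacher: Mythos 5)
Your proof is correct. The forward direction of part~1 is essentially the paper's argument: you re-derive (by contradiction) the fact that an empty open ball forces $\supp{\bf A}$ to have a maximum and $\bigcap{\bf A}={\bf A}_\gamma\ne\emptyset$, and you use part~3 of Lemma~\ref{niatl} exactly as the paper does to get value-immediacy. Where you genuinely diverge is in the converse of part~1 and in part~2. The paper routes both through the trichotomy of part~5 of Lemma~\ref{niatl}: for the converse it observes that a non-immediate, residue-immediate type is value-extending, hence not residue-extending, hence (having nonempty intersection) not value-immediate; and it obtains part~2 by combining part~1 of the proposition with the fact that every non-trivial type is immediate, value-extending or residue-extending. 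You instead argue directly from the definitions: for the converse you exhibit an explicit contradiction at a point $c\in\bigcap{\bf A}$ (it lies in every ${\bf A}_\gamma^\circ$ by residue-immediacy and (\ref{atrieq}), yet value-immediacy expels it from one of them), and for part~2 you note that failure of (\ref{atri}) makes ${\bf A}_\delta^\circ=\emptyset$, so part~3 of Lemma~\ref{niatl} applies to every $c\in K$ at once. Your route is shorter and avoids the ``extending'' vocabulary and the mutual-exclusivity statement altogether; the paper's route costs a little more but makes visible how the immediate/value-extending/residue-extending classification lines up with the immediate/value-immediate/residue-immediate dichotomies, which is the conceptual point of that subsection. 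One cosmetic remark: in the converse you invoke (\ref{atrieq}), which by part~1 of Lemma~\ref{niatl} requires ${\bf A}\ne\emptyset$; this is harmless because the empty approximation type is not value-immediate, so your hypothesis is never satisfied there, but a one-line remark to that effect would make the argument airtight.
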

\begin{proof}
1): Assume first that {\bf A} is immediate. Then $\supp {\bf A}$ has no maximal element.
Hence for every  $\gamma\in\supp {\bf A}$ there is $\delta\in\supp {\bf A}$ such that 
$\delta>\gamma$ and therefore $\emptyset\ne {\bf A}_\delta \subseteq 
{\bf A}_\gamma^\circ$. This proves that {\bf A} is residue-immediate. 

Take any $c\in K$. Since $\bigcap{\bf A}=\emptyset$ there is some $\gamma\in\supp{\bf A}$
such that $c\notin {\bf A}_\gamma^\circ$. Now it follows from part 3) of 
Lemma~\ref{niatl} that condition (\ref{atvi}) holds, so {\bf A} is value-immediate.

For the proof of the reverse implication, assume that {\bf A} is not immediate, so
$\bigcap {\bf A}\ne \emptyset$. Assume that {\bf A} is residue-immediate and thus 
value-extending. From part 5) of Lemma~\ref{niatl} we conclude that it cannot be
residue-extending. Since $\bigcap {\bf A}\ne \emptyset$ holds, condition (\ref{atvi})
must fail, showing that {\bf A} is not value-immediate.

\mn
2): This follows from part 1) of our lemma together with part 5) of Lemma~\ref{niatl}.
%
\end{proof}

\bn
%
%
%
\section{Realization of approximation types}
%

%
%
%
\subsection{Proof of Theorem~\ref{MTat1}}    \label{sectmth1}
\mbox{ }\sn
Let $(K,v_0)$ be an arbitrary valued field. 
Recall that we denote by $\cV$ the set of all extensions of $v_0$ 
to $K(x)$, and by $\cA$ the set of all non-trivial approximation
types over $(K,v_0)$. 
The following is a more precise version of Theorem~\ref{MTat1}.
\begin{theorem}                       \label{MTatenh1}
For every non-trivial approximation type {\bf A} there is an extension $v$ of $v_0$ to 
$K(x)$ such that $\appr_v (x,K)={\bf A}$. That is, the function 
\begin{equation}                                   \label{v->at}
\cV\>\longrightarrow \cA\>, \qquad  v\,\mapsto\, \appr_v (x,K)  
\end{equation}
is surjective. Moreover, the following assertions hold:
\sn
1) If {\bf A} is transcendental immediate, then the extension $v$ is uniquely determined and
immediate.

\sn
2) If {\bf A} is algebraic immediate with $\supp{\bf A}\ne v_0 K$, then the extension $v$ can 
be chosen to be value-transcendental or residue-transcendental. If {\bf A} is algebraic 
immediate with $\supp{\bf A}=v_0 K$, then the extension $v$ is uniquely determined and 
value-transcendental.

\sn
3) If {\bf A} is value-extending, then the extension can be constructed in the 
following way: Take $b\in\bigcap {\bf A}$, and take $\alpha$ in some ordered 
abelian group containing $v_0 K$ such that $\alpha$ is not a torsion element modulo $v_0 K$
and realizes the cut $\cC$ in $v_0 K$ that has lower cut set $\supp {\bf A}$. If
$v$ is the extension of $v_0$ to $K(x)$ obtained from
Corollary~\ref{pBcor} by assigning the value $\alpha$ to $z=x-b$, then
$\appr_v(x,K)={\bf A}$.

Every extension $(K(x)|K,v)$ constructed in this way is value-transcendental.

\sn
4) If {\bf A} is residue-extending, then the extension can be constructed 
in the following way: Take $b\in\bigcap {\bf A}={\bf A}_\delta$. If $v$ is the extension 
of $v_0$ to $K(x)$ obtained from Corollary~\ref{pBcor} by assigning the value $\delta$ to 
$z=x-b$, then $\appr_v(x,K)={\bf A}$.

Every extension $(K(x)|K,v)$ constructed in this way is residue-transcendental.
\end{theorem}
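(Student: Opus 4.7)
The plan is a case analysis based on Lemma \ref{niatl}(5), which partitions non-trivial approximation types over $(K, v_0)$ into three mutually exclusive classes: immediate, value-extending, and residue-extending. In each class I construct an extension $v$ of $v_0$ to $K(x)$ realizing $\mathbf{A}$, thereby establishing both surjectivity of the map $v \mapsto \appr_v(x, K)$ and the corresponding structural clause of the theorem. The immediate case splits further into transcendental and algebraic sub-cases.

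The transcendental immediate case (part 1) is handled directly by Theorem \ref{KT2at}, which constructs an immediate simple transcendental extension realizing $\mathbf{A}$ and supplies the uniqueness clause via the valuation-preserving isomorphism. For the value-extending case (part 3), I pick $b \in \bigcap\mathbf{A}$ and first observe that $\supp\mathbf{A}$ has no maximum in $v_0 K$: otherwise residue-immediacy at the maximum would fail. Hence the cut in $v_0 K$ with lower cut set $\supp\mathbf{A}$ is not realized in $v_0 K$, so I pick $\alpha$ realizing it in an ordered-group extension, non-torsion modulo $v_0 K$ (arrangeable via the free extension $v_0 K \oplus \Z\alpha$), and apply Corollary \ref{pBcor} to $z = x-b$ with $vz = \alpha$, producing a valuation $v$ on $K(x) = K(z)$. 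Lemma \ref{x:realat}(1) verifies $\appr_v(x, K) = \mathbf{A}$, and Corollary \ref{pBcor} confirms $v$ is value-transcendental. The residue-extending case (part 4) is analogous: Lemma \ref{niatl}(4) provides $\delta = \max\supp\mathbf{A}$ with $\bigcap\mathbf{A} = \mathbf{A}_\delta$ and $\mathbf{A}_\delta^\circ = \emptyset$; pick $b \in \mathbf{A}_\delta$, apply Corollary \ref{pBcor} with $\alpha = \delta \in v_0 K$, verify via Lemma \ref{x:realat}(2), and note that Corollary \ref{pBcor} makes the extension residue-transcendental.

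The algebraic immediate case (part 2) is the main obstacle, because $\mathbf{A}$ carries an associated minimal polynomial $f$ of degree $\mathbf{d}$ while $x$ must be transcendental, forcing the extension to $K(x)$ to be value- or residue-transcendental rather than immediate. My strategy is to define $v$ first on the $K$-vector subspace $V = K + Kx + \cdots + Kx^{\mathbf{d}-1}$ by $vg(x) = vg(c)$ for $c \nearrow x$ (justified by Lemma \ref{CK1}), and then extend to $K[x]$ by prescribing the value of $f(x)$. Since $f(x)$ is transcendental over $K$, I can regard $z := f(x)$ as a transcendental generator and invoke Corollary \ref{pBcor} on $K(z)$. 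By Lemma \ref{C4+}, $vf(c) = \beta_{\mathbf{h}} + \mathbf{h}\, v(x-c)$ for $c \nearrow x$ is strictly increasing and cofinal in $S := \{\beta_{\mathbf{h}} + \mathbf{h}\gamma \mid \gamma \in \supp\mathbf{A}\}$, so I assign $vz = \alpha$ with $\alpha > S$. When $\supp\mathbf{A} = v_0 K$, the set $S$ is cofinal in $v_0 K$ and $\alpha$ is forced outside $v_0 K$; uniqueness of $v$ up to equivalence then follows from Lemma \ref{exteq} applied to the cut realized at $+\infty$, and the extension is value-transcendental. When $\supp\mathbf{A} \ne v_0 K$, I have the flexibility of choosing $\alpha \in v_0 K$ above $S$ (residue-transcendental) or $\alpha$ realizing a cut outside $v_0 K$ (value-transcendental).

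The most delicate technical work in part 2 is verifying that the construction yields a genuine valuation on the full field $K(x)$ and that $\appr_v(x, K)$ equals $\mathbf{A}$ exactly, not merely modulo a larger ball. Values $vg(x)$ for $g \in K[x]$ of degree $\geq \mathbf{d}$ are computed via Euclidean division by $f$, combined with Lemma \ref{C4+}, which controls the Hasse--Schmidt derivatives $\partial_i f$ and the dominant term $\beta_{\mathbf{h}} + \mathbf{h}\, v(x-c)$. Lemma \ref{x:realiat} is then used to confirm that $x$ realizes $\mathbf{A}$, and the critical strict inequality $\alpha > S$ ensures that the approximation type agrees with $\mathbf{A}$ on the nose rather than collapsing into a properly larger ball.
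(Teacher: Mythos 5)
Your treatment of parts 1), 3) and 4) follows the paper's proof: Theorem~\ref{KT2at} with Corollary~\ref{corKT2} for the transcendental immediate case, and Corollary~\ref{pBcor} applied to $z=x-b$ combined with Lemma~\ref{x:realat} for the value- and residue-extending cases. One small slip in part 3): your claim that $\supp{\bf A}$ has no maximum for a value-extending {\bf A} is false (see Example~\ref{exreve}, where adjoining $B_{\gamma_0}^\circ(0,K)$ yields a value-extending type with $\max\supp{\bf A}=\gamma_0$); this is harmless, since no element of $v_0K$ ever realizes a cut in $v_0K$ and a non-torsion $\alpha$ realizing the cut can still be produced (the paper perturbs by an infinitesimal), but the stated justification is wrong.

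The genuine gap is in part 2). You propose to define $v$ directly on $K[x]$ by fixing it on $V=K+Kx+\cdots+Kx^{{\bf d}-1}$ via Lemma~\ref{CK1} and then prescribing $vf(x)=\alpha>S$, computing values of higher-degree polynomials through the $f$-adic expansion. You correctly identify that ``the most delicate technical work is verifying that the construction yields a genuine valuation on the full field $K(x)$'' --- and then you do not do it. That verification is the entire difficulty: proving that $vh=\min_i vg_i(x)+i\alpha$ is multiplicative under $f$-adic re-expansion is precisely the content of MacLane/key-polynomial theory and cannot be waved through. Nor does Corollary~\ref{pBcor} applied to $z=f(x)$ help: it only produces a valuation on the proper subfield $K(f(x))$, assigning $\sum c_i f(x)^i$ the value $\min v_0c_i+i\alpha$ with \emph{constant} coefficients, and there is no general principle that this is compatible with the prescribed values on the $K$-vector space $V$ or that a common extension to $K(x)$ exists. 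The paper sidesteps all of this: it first realizes {\bf A} by an algebraic element $a$ over $K$ using Theorem~\ref{KT3at}, then applies Corollary~\ref{pBcor} to $z=x-a$ (which \emph{is} transcendental over $K(a)$) to get $v$ on $K(a,x)$, restricts to $K(x)$, and concludes $\appr_v(x,K)=\appr_v(a,K)={\bf A}$ from Corollary~\ref{v>L-imm}. The $f$-adic formula then appears only in a remark, as a \emph{consequence} of the already-constructed valuation. Your uniqueness claim for $\supp{\bf A}=v_0K$ is likewise too quick: before Lemma~\ref{exteq} can be applied one must show that an \emph{arbitrary} extension realizing {\bf A} has the prescribed form; the paper does this by extending to $\widetilde{K(x)}$, producing a conjugate $a\in\tilde K$ with $\appr_v(a,K)={\bf A}$, showing $v(x-a)>v_0K$ and that $a$ is unique, and then checking independence of the chosen extension of $v_0$ to $\tilde K$ via a conjugation argument --- none of which is replaced by anything in your sketch.
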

\begin{proof}
Let us assume first that {\bf A} is immediate. If it is transcendental, then by 
Theorem~\ref{KT2at} and Corollary~\ref{corKT2} there is a uniquely determined extension $v$
such that $\appr_v (x,K)={\bf A}$. This proves part 1).

Now assume that {\bf A} is algebraic. We first use 
Theorem~\ref{KT3at} to obtain an extension $v$ of $v_0$ to some algebraic extension $K(a)$
of $K$ such that $\appr_v (a,K)={\bf A}$. Then we choose some $\alpha>\supp {\bf A}$ that 
is an element either of $v_0 K$, or of some ordered abelian group containing $v_0 K$, in which 
case $\alpha$ should not be a torsion element modulo $v_0 K$. With $z:=x-a$ we use 
Corollary~\ref{pBcor}
to extend $v$ from $K(a)$ to $K(a,z)=K(a,x)$ by assigning the value $\alpha$ to the element 
$z$. Finally, we restrict this extension to $K(x)$ and infer from Corollary~\ref{v>L-imm} 
that $\appr_v (x,K)=\appr_v (a,K)={\bf A}$.

We can choose the extension $(K(a,x)|K(a),v)$ to be value-transcendental if we take
$\alpha\notin v_0 K$, and to be residue-transcendental if we take $\alpha\in v_0 K$; however,
if $\supp{\bf A}=v_0 K$, then the condition that $\alpha>\supp {\bf A}$ forces $\alpha\notin
v_0 K$ (in which case it is automatically non-torsion modulo $v_0 K$).
If $(K(a,x)|K(a),v)$ is value-transcendental, then so is 
$(K(a,x)|K,v)$, that is, $vK(a,x)/v_0 K$ is not a torsion group. Since $K(a,x)|K(x)$ is 
algebraic, $vK(a,x)/v_0 K(x)$ is a torsion group, so we conclude that $vK(x)/v_0 K$ cannot be
a torsion group, i.e., $(K(x)|K,v)$ must be value-transcendental. A similar argument 
shows that if $(K(a,x)|K(a),v)$ is residue-transcendental, then so is $(K(x)|K,v)$, 
using the fact that $K(a,x)v/K(x)v$ is algebraic. We have proved the first 
assertion of part 2).

Assume that {\bf A} is algebraic immediate with $\supp{\bf A}=v_0 K$. Then the extension we 
have already constructed is value-transcendental. It remains to show that it
is uniquely determined. To this end, let us assume that $(K(x)|K,v)$ is an extension
such that $\appr_v(x,K)={\bf A}$.
We extend $v$ from $K(x)$ to $\tilde{K}(x)$ and call this extension again $v$. Its
restriction to $\tilde K$ provides us with an extension of $v_0 $ from $K$ to $\tilde K$.

By Theorem~\ref{KT3at}, there are $b\in\tilde{K}$ and an extension $w$ of $v_0$ from 
$K$ to $K(b)$ such that $\appr_v(x,K)$ is realized by $b$ in $(K(b),w)$, that is,
$\appr_v(x,K)=\appr_w(b,K)$. We extend $w$ to $\tilde K$. As two extensions of $v_0$ 
from $K$ to $\tilde K$ are conjugate, there is $\sigma\in\Aut \tilde{K}|K$ such that
$w=v\circ\sigma$. We set $a:=\sigma b$. Then for every $c\in K$ we have that 
\[
v(a-c)\>=\> v(\sigma b-c)\>=\> v\sigma (b-c)\>=\> w(b-c)\>,
\]
which shows that $\appr_v(a,K)=\appr_w(b,K)=\appr_v(x,K)$. From 
Corollary~\ref{v>L-imm} we infer that 
\[
v(x-a)\>\geq\> \supp\,\appr_v(x,K)\>=\>v_0 K
\]
holds in $(K(a,x),v)$, i.e., $v(x-a)>v_0 K$. By Lemma~\ref{exteq}, this uniquely determines 
the extension of $v$ from $K(a)$ to $K(a,x)$.

If also $a'\in \tilde K$ realizes $\appr_v(x,K)=\appr_v(a,K)$, then again by 
Corollary~\ref{v>L-imm}, $v(x-a')\geq v_0 K$. It follows that $v(a-a')\geq\min\{v(x-a),
v(x-a')\}\geq v_0 K$. As $a-a'\in\tilde K$, this means that $v(a-a')=\infty$, i.e., $a=a'$. 
We have now shown that once we fix an extension $v$ of $v_0$ to $\tilde K$, then the 
element $a$ and the extension of $v$ from $K(a)$ to $K(a,x)$ are uniquely determined; then 
also by restriction, the extension of $v_0$ to $K(x)$ is uniquely determined.

If we choose another extension $v_1$ of $v_0$ to $\tilde K$, then with $w$ and $b$ as above,
we have that $w=v_1\circ\tau$ for some $\tau\in\Aut\tilde{K}|K$. Replacing $a$ by $a_1:=
\tau b$, the same construction as before yields a unique extension of $v_1$ from $K(a_1)$ to 
$K(a_1,x)$. The automorphism $\rho:=\tau\sigma^{-1}\in\Aut\tilde{K}|K$ is an isomorphism
from $K(a)$ to $K(a_1)$. As before for $v$, we have that $\appr_{v_1}(a_1,K)=\appr_w(b,K)=
\appr_v(x,K)$. The uniqueness statement in Theorem~\ref{KT3at} thus implies that $\rho$ is 
an isomorphism from $(K(a),v)$ to $(K(a_1),v_1)$. We extend $\rho$ to an isomorphism 
from $K(a,x)$ to $K(a_1,x)$ by setting $\rho x=x$. As the extensions of $v$ from $K(a)$ to
$K(a,x)$ and of $v_1$ from $K(a_1)$ to $K(a_1,x)$ are uniquely determined by the facts that
$v(x-a)>v_0 K$ and $v(x-a_1)>v_0 K$ and as $\rho(x-a)=x-a_1$, we find that $\rho$ is an 
isomorphism from $(K(a,x),v)$ to $(K(a_1,x),v_1)$. However, as the restriction of $\rho$ 
to $K(x)$ is the identity, the restrictions of $v$ and $v_1$ to $K(x)$ must coincide.
This finishes the proof of the uniqueness assertion in the second part of statement 2).

\pars
Now assume that {\bf A} is not immediate and pick some $b\in\bigcap{\bf A}$. By part 5) of
Lemma~\ref{niatl}, {\bf A} is value-extending or residue-extending. Let us assume first that 
it is value-extending. Consider the cut in $v_0 K$ whose lower cut set is $\supp{\bf A}$ and
choose an element $\alpha$ in some ordered abelian group containing $v_0 K$ that realizes 
this cut, i.e., $\alpha>\gamma$ for all $\gamma\in\supp{\bf A}$ and $\alpha<\varepsilon$ 
for all $\varepsilon\in v_0 K\setminus
\supp{\bf A}$. We can always choose $\alpha$ so that it is not a torsion element
modulo $v_0 K$; indeed, if the cut is induced by an element $\beta$ in the divisible hull of
$v_0 K$, then we can replace $\beta$ by $\beta+\iota$ where $\iota$ is an infinitesimal,
that is, $0<\iota<\gamma$ for all $\gamma$ in the divisible hull of $v_0 K$. With $z:=x-b$ 
we use Corollary~\ref{pBcor} to extend $v_0$ from $K$ to $K(z)=K(x)$ by assigning the value 
$\alpha$ to the element $z$. By our choice of $\alpha$, the so constructed extension 
$(K(x)|K,v)$ is value-transcendental. 

Moreover, for arbitrary $c\in K$ we have that 
$v(x-c)=v(x-b+b-c)=\min\{v(x-b),v(b-c)\}\leq v(x-b)<\varepsilon$ for all $\varepsilon\in
v_0 K\setminus\supp{\bf A}$. Hence by part 1) of Lemma~\ref{x:realat}, $\appr_v(x,K)=
{\bf A}$. This finishes the proof of part 3). 

\pars
Finally, assume that {\bf A} is residue-extending and set $\delta=\max\supp{\bf A}\in v_0 K$.
With $z:=x-b$ we use Corollary~\ref{pBcor} to extend $v_0$ from $K$ to $K(z)=K(x)$ by 
assigning the value $\delta$ to the element $z$. From Corollary~\ref{pBcor} it follows that 
the so constructed extension $(K(x)|K,v)$ is residue-transcendental.

By construction, for arbitrary $c\in K$ we have that $v(x-c)=v(x-b+b-c)=
\min\{v(x-b),v(b-c)\}\leq v(x-b)=\max\supp{\bf A}$. Hence by part 2) of 
Lemma~\ref{x:realat}, $\appr_v(x,K)={\bf A}$. This finishes the proof of part 4). 
\end{proof}

\begin{remark}
Assume that {\bf A} is algebraic immediate. Let us describe in more detail the restriction 
to $K(x)$ of the value-transcendental extensions $v$ on $K(a,x)$ that we have constructed 
in the above proof.

We fix an extension $v$ of $v_0$ to $\tilde K$.
The minimal polynomial $f$ of $b$ over $K$ is an associated minimal polynomial for 
$\appr_v(x,K)$. As $a$ and $b$ are conjugate over $K$, $f$ is also the minimal polynomial
of $a$ over $K$. We write 
\[
f(X)\>=\> \prod_{i=1}^{\deg f} (X-a_i) 
\]
in such a way that $v(x-a_i)\notin v\tilde{K}$ for $1\leq i\leq n\leq \deg f$ and $v(x-a_i)
\in v\tilde{K}$ for $n<i\leq \deg f$. Take any $i\leq n$. If $v(x-a)\ne v(x-a_i)$, then 
$v(a-a_i)=\min\{v(x-a), v(x-a_i)\}\notin v\tilde{K}$, a contradiction. Therefore,
\[
vf(x)\>=\> \sum_{i=1}^{\deg f} v(x-a_i) \>=\> nv(x-a) + \sum_{i=n+1}^{\deg f} 
v(x-a_i)\>=\> nv(x-a) + \alpha
\]
with $\alpha\in v\tilde{K}$. Hence $vf(x)$ is not a torsion element modulo $v_0 K$. Take any 
$h\in K[x]$ and present it in its $f$-adic expansion
\[
h\>=\> \sum_{i=1}^n g_i(x) f^i (x)\>,
\]
where every $g_i\in K[x]$ is of degree less than $\deg f=\deg \appr_v(x,K)$. By
Lemma~\ref{CK1}, the value of $g_i(x)$ is uniquely determined by $\appr_v(x,K)$ and lies in
$v_0 K$. Since $vf(x)$ is not a torsion element modulo $v_0 K$, it follows that 
\[
vh\>=\>\min_{1\leq i\leq n} vg_i(x) + i vf(x)
\]
and that $vK(x)=v_0 K\oplus\Z vf(x)$. Further, since the extension $(K(a)|K,v)$
is immediate, we have that $K(a)v=Kv_0\,$. From Corollary~\ref{pBcor} we know that $K(a,x)v=
K(a)v$, so we obtain that $K(a,x)v=Kv_0$ and thus also $K(x)v=Kv_0\,$.

\parm
Let us also mention that if the extension field $(K(a),v)$ we have constructed 
in the proof admits a transcendental immediate extension, then an immediate extension 
$(K(a,x)|K(a),v)$ can be constructed with $v(x-a)>\supp{\bf A}$. Then also 
$(K(a,x)|K,v)$ is immediate, and restricting $v$ to $K(x)$ yields an immediate extension 
$(K(x)|K,v)$ with $\appr_v(x,K)={\bf A}$.
\end{remark}

\begin{remark}
Value-extending approximation types determine the cut in $v_0 K$ that has to be filled by the
value of an element like $x-b$ if $x$ realizes the type, but if this cut can also 
be filled by an element that lies in 
the divisible hull of $v_0 K$, then the approximation type does not determine whether it has to 
be filled by such an element or an element that is non-torsion modulo $v_0 K$. We used this 
fact in the proof of Theorem~\ref{MTatenh1}. However, in the setting of this theorem this 
fact also implies that uniqueness of the extension $v$ will in general fail.

The situation is similar for residue-extending approximation types. In general, they cannot
determine whether the residue of an element like $d(x-c)$ has to be algebraic or 
transcendental over $Kv_0\,$. 

These problems do not appear in the setting of Theorems~\ref{MTat2} and~\ref{MTatenh2},
where $v_0 K$ is divisible and $Kv_0$ is algebraically closed.
\end{remark}

\mn
%
%
%
\subsection{Approximation types and model theoretic $1$-types}    \label{sect1t}
\mbox{ }\sn
In this section we exhibit the relation between approximation types and $1$-types and 
the information that can be inferred from model theoretic algebra.
For background on model theory and the notions we use we refer the reader to \cite{[CK],PD}.
In Theorem~\ref{mtreal} we will show that under certain additional assumptions, 
a given approximation type over $(K,v)$ can be realized by a transcendental element in some
elementary extension of $(K,v)$. (For simplicity, we will write ``$v$'' even for the 
valuation on $K$.)

Take a language $\cL$ and an $\cL$-structure $\cS$. Further, take a set $\cT$ of 
$\cL$-formulas in one variable $X$ with parameters from the universe of $\cS$. Let
$\cS'$ be an $\cL$-structure with substructure $\cS$ and $x$ an element of the 
universe of $\cS'$. We say that $x$ \bfind{realizes $\cT$ in $\cS'$} if 
$\varphi(x)$ holds in $\cS'$ for every $\varphi(X)\in\cT$.

A set $\cT$ as above is called a \bfind{type} (or {\bf $1$-type}) {\bf over $\cS$} if it 
is consistent. A criterion for this is that every finite subset of $\cT$ is realized in 
$\cS$ by some $a$ in the universe of $\cS$. The \bfind{type of $x$ over $\cS$}
is the set of all $\cL$-formulas $\varphi(X)$ in one variable $X$ with 
parameters from the universe of $\cS$ for which the sentence $\varphi(x)$ holds in
$\cS'$. 

When studying valued fields, we work with a language of valued fields $\cL$ that 
consists of the language $\cL_R=\{+,-,\cdot\,,0,1\}$ of rings or alternatively, 
the language $\cL_F=\{+,-,\cdot\,,0,1,\mbox{}^{-1}\}$ of fields, together with either 
a unary relation symbol $\cO(X)$ for membership in the valuation ring or a binary
relation symbol $X\mid_vY$ for valuation divisibility ($x\mid_vy\Leftrightarrow vx\leq 
vy$). Then ``$v(X-c)\geq vd$'', ``$v(X-c)> vd$'' and ``$v(X-c)= vd$'' are 
$\cL$-formulas with parameters $c,d$. If $(L|K,v)$ is any valued field extension and
$x\in L$, then the assertions ``$c\in \appr_v(x,K)_\gamma$'' and ``$c\in
\appr_v(x,K)_\gamma^\circ$'' are expressed by the sentences ``$v(x-c)\geq vd_\gamma$'' and 
``$v(x-c)> vd_\gamma$'', where $d_\gamma\in K$ with $vd_\gamma=\gamma$.

An ordered abelian group $\Gamma$ is \bfind{dense} if for all $\gamma,\delta\in\Gamma$
with $\gamma<\delta$ there is $\beta\in\Gamma$ such that $\gamma<\beta<\delta$.
\begin{proposition}
Take a valued field $(K,v)$, and for every $\gamma\in vK$ choose some $d_\gamma\in K$ 
with $vd_\gamma=\gamma$. Take a non-trivial approximation type {\bf A} over $(K,v)$, and for 
every $\gamma\in\supp{\bf A}$, pick some $c_\gamma\in{\bf A}_\gamma\,$. If {\bf A} is
value-extending, then assume that $vK$ is dense, and if {\bf A} is
residue-extending, then assume that $Kv$ is infinite.
\sn
If {\bf A} is immediate, then set
\[
\cT_{\bf A} \>:=\> \{v(X-c_\gamma)\geq vd_\gamma \mid \gamma\in\supp {\bf A}\}\>.
\]
%
%
If {\bf A} is value-extending, then pick some $b\in\bigcap{\bf A}$ and set
\begin{eqnarray*}
\cT_{\bf A} &:=& \{v(X-b)> vd_\gamma \mid \gamma\in\supp {\bf A}\}\\
&& \cup\> \{\neg\, v(X-c)\geq vd_\varepsilon \mid \varepsilon\in vK\setminus\supp {\bf A}
\mbox{ and } c\in K\}\>.
\end{eqnarray*}
If {\bf A} is residue-extending, then pick some $b\in\bigcap{\bf A}$, set
$\gamma_{\max} := \max\supp{\bf A}$ and
\[
\cT_{\bf A} \>:=\> \{v(X-b)= vd_{\gamma_{\max}}\}\>\cup\> 
\{\neg\, v(X-c) >vd_{\gamma_{\max}}\mid c\in K\}\>.
\]
In all three cases, the following assertions hold:
\sn
1) The set $\cT_{\bf A}$ is finitely realizable in $(K,v)$.
\sn
2) If $x$ is an element in any valued field extension 
of $(K,v)$ that realizes $\cT_{\bf A}$, then $x$ realizes {\bf A}, that is, 
$\appr_v(x,K)={\bf A}$. 
\end{proposition}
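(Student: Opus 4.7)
My strategy is to handle each of the three cases in the trichotomy of part~5 of Lemma~\ref{niatl} separately, since $\cT_{\bf A}$ has a different form in each. Assertion~2 will follow essentially immediately from the realization criteria already established in Lemmas~\ref{x:realiat} and~\ref{x:realat}, so the real content lies in assertion~1 (finite realizability). For the immediate case, a finite subset of $\cT_{\bf A}$ imposes only finitely many conditions $v(X - c_{\gamma_i}) \geq \gamma_i$; setting $\gamma^\ast := \max_i \gamma_i$, the element $c_{\gamma^\ast} \in K$ belongs to $\mathbf{A}_{\gamma^\ast} \subseteq \mathbf{A}_{\gamma_i}$ for every $i$ (the balls form a nest), and hence realizes the finite subset. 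For assertion~2 I would note that any $x$ realizing $\cT_{\bf A}$ satisfies $c_\gamma \in \appr_v(x,K)_\gamma$ for every $\gamma \in \supp\mathbf{A}$, which is condition~c) of Lemma~\ref{x:realiat}, so $\appr_v(x,K) = \mathbf{A}$.

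For the value-extending case, a finite subset of $\cT_{\bf A}$ consists of finitely many conditions $v(X - b) > \gamma_i$ (with $\gamma_i \in \supp\mathbf{A}$) and finitely many conditions $v(X - c_j) < \varepsilon_j$ (with $\varepsilon_j \in vK \setminus \supp\mathbf{A}$). Writing $\gamma^\ast := \max_i \gamma_i$ and $\varepsilon^\ast := \min_j \varepsilon_j$, the fact that $\supp\mathbf{A}$ is an initial segment of $vK$ forces $\gamma^\ast < \varepsilon^\ast$, and the assumed density of $vK$ supplies infinitely many $\beta \in vK$ strictly between them, so I may select one avoiding the finitely many values $v(b - c_j)$. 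Picking $e \in K$ with $ve = \beta$ and setting $x := b + e \in K$, the ultrametric triangle law yields $v(x - b) = \beta > \gamma_i$ and $v(x - c_j) = \min\{\beta, v(b - c_j)\} \leq \beta < \varepsilon_j$ (using $\beta \neq v(b - c_j)$). Assertion~2 in this case follows directly from Lemma~\ref{x:realat} part~1, whose hypotheses are precisely what $x$ satisfies.

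Finally, in the residue-extending case, a finite subset of $\cT_{\bf A}$ imposes $v(X - b) = \gamma_{\max}$ together with finitely many conditions $v(X - c_j) \leq \gamma_{\max}$. I fix $d \in K$ with $vd = \gamma_{\max}$ and look for a realization of the form $x = b + rd$ with $vr = 0$: a brief case analysis on whether $v(b - c_j)$ is less than, equal to, or greater than $\gamma_{\max}$ shows that the conditions on $v(x - c_j)$ hold automatically except in the equality case, where they reduce to requiring $rv \neq -(d^{-1}(b - c_j))v$ for the finitely many offending indices $j$; since $Kv$ is infinite, such $r$ exists. Assertion~2 in this case is Lemma~\ref{x:realat} part~2. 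The main technical hurdle throughout the proof is precisely these case analyses, in which the hypotheses that $vK$ be dense or that $Kv$ be infinite are invoked exactly to dodge the finitely many forbidden values or residues coming from the negated formulas.
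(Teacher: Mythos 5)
Your proof is correct and follows essentially the same route as the paper: the same reduction of assertion~2 to Lemmas~\ref{x:realiat} and~\ref{x:realat}, and the same constructions $b+e$ and $b+rd$ for finite realizability, differing only in the cosmetic choice of how to dodge the finitely many forbidden values $v(b-c_j)$ (you pick $\beta$ among the infinitely many values density provides, while the paper folds those values into the minimum defining the upper bound for $\beta$).
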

\begin{proof}
1): Take a finite subset $\cT_0\subseteq\cT_{\bf A}\,$. 
Assume first that {\bf A} is immediate.
Then we can write $\cT_0=\{v(X-c_{\gamma_i})\geq vd_{\gamma_i} \mid 1\leq i\leq n\}$.
We set $\gamma:=\max_i {\gamma_i}\in\supp{\bf A}$. 
For arbitrary $a\in {\bf A}_\gamma\,$, this 
implies that $a\in {\bf A}_{\gamma_i}$ and hence $v(a-c_{\gamma_i})\geq vd_{\gamma_i}$
for $1\leq i\leq n$. 
We have shown that $a$ realizes $\cT_0$ in $(K,v)$. 

\pars
Now assume that {\bf A} is value-extending. We assume that $\supp {\bf A}\ne\emptyset$;
the easy proof in the case of $\supp {\bf A}\ne\emptyset$ is left to the reader.
We write $\cT_0=\{v(X-b)> vd_{\gamma_i} \mid 1\leq i\leq n\}\cup
\{\neg\, v(X-c_j)\geq vd_{\varepsilon_j} \mid 1\leq j\leq m\}$. 
Set $\gamma:=\max_i \gamma_i\,$
and $\varepsilon:=\min\{\varepsilon_j\,,\,v(b-c_j)\mid 1\leq j\leq m \mbox{ and } 
\gamma<v(b-c_j)\}$. Since 
$vK$ is assumed to be dense, there is $\beta\in vK$ such that $\gamma<\beta<\varepsilon$.
Choose $b'\in K$ with $vb'=\beta$ and set $a:=b+b'$. Then for $1\leq i\leq n$ we have
that $v(a-b)=vb'=\beta>\gamma_i\,$. For $1\leq j\leq m$ we have that $vb'=\beta \ne
v(b-c_j)$, whence $v(a-c_j)=v(b'+b-c_j)=\min\{\beta,v(b-c_j)\}\leq\beta<\varepsilon_j\,$.
We have shown that $a$ realizes $\cT_0$ in $(K,v)$. 

\pars
Finally, assume that {\bf A} is residue-extending. Set $d:=d_{\gamma_{\max}}$. As we 
may pass to a larger subset of $\cT_{\bf A}$ as long as it remains finite, we can 
write $\cT_0=\{v(X-b)= vd\}\cup
\{\neg\,v(X-c_j) >vd\mid 1\leq j\leq m\}$. Since $Kv$ is assumed to be 
infinite, we can choose some $c\in K$ such that $vc=0$ and $cv\ne -d^{-1}(b-c_j)v$ for
all $j$ such that $vd=v(b-c_j)$. This implies that for those $j$ we have that 
$v(c+d^{-1}(b-c_j))=0$. Consequently, $v(c+d^{-1}(b-c_j))=\min\{vc,vd^{-1}(b-c_j)\}
\leq 0$ for all $j\in\{1,\ldots,m\}$. 

We set $a:=b+cd$. Then $v(a-b)=vcd=vd$ and $v(a-c_j)
=v(cd+b-c_j)=vd+v(c+d^{-1}(b-c_j))\leq vd\,$. Again, we
have shown that $a$ realizes $\cT_0$ in $(K,v)$. 

\sn
2): Take $x$ in any valued field extension of $(K,v)$ that realizes $\cT_{\bf A}$.

Assume first that {\bf A} is immediate. Then $v(x-c_{\gamma})\geq vd_{\gamma}=
\gamma$ for all $\gamma\in\supp{\bf A}$. By Lemma~\ref{x:realiat} we conclude that
$x$ realizes {\bf A}.

Now assume that {\bf A} is value-extending. Then $v(x-b)> vd_{\gamma}=
\gamma$ for all $\gamma\in\supp{\bf A}$, and 
$v(x-c)< vd_\delta =\delta$ for all $\delta\notin\supp {\bf A}$ and $c\in K$. By part 
1) of Lemma~\ref{x:realat} we conclude that $x$ realizes {\bf A}.

Finally, assume that {\bf A} is residue-extending. Then $v(x-b)=\gamma_{\max}=
\max\supp{\bf A}$ and $v(x-c)\leq v(x-b)$ for all $c\in K$.
By part 2) of Lemma~\ref{x:realat} we conclude that $x$ realizes {\bf A}.
\end{proof}

From general model theory we infer that all types over an $\cL$-structure $\cS$ are
realized in every $\card(\cS)^+$-saturated elementary extension of $\cS$, and that
such extensions always exist. We take $\cL$ to be the language of valued rings or fields, 
the $\cL$-structure $\cS$ to be a valued field $(K,v)$, and {\bf A} to be a non-trivial
approximation type over $(K,v)$. Then {\bf A} cannot be realized by an element in $K$.
Hence if an element $x$ in some elementary extension realizes {\bf A}, then it will not lie 
in $K$ and will thus be, by a well-known basic fact of model theory, be transcendental 
over $K$. This proves:
\begin{theorem}                           \label{mtreal}
Take a valued field $(K,v)$ and a non-trivial approximation type {\bf A} over $(K,v)$. If 
{\bf A} is value-extending, then assume that $vK$ is dense, and if {\bf A} is
residue-extending, then assume that $Kv$ is infinite. Then {\bf A} is realized in
some elementary extension of $(K,v)$ by an element $x$ that is transcendental over 
$K$.
\end{theorem}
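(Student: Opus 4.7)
The plan is to combine the preceding proposition with two standard facts from model theory: the existence of saturated elementary extensions, and the fact that an elementary extension contains no new algebraic elements over the base.

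First, I would apply the proposition just proved to obtain the set $\cT_{\bf A}$ of $\cL$-formulas in one variable with parameters from $K$. Under the density assumption on $vK$ (in the value-extending case) or the infiniteness assumption on $Kv$ (in the residue-extending case), assertion~1 of that proposition tells us $\cT_{\bf A}$ is finitely realizable in $(K,v)$. By the compactness theorem applied to $\cT_{\bf A}$ together with the elementary diagram of $(K,v)$, the set $\cT_{\bf A}$ is consistent with $\mathrm{Th}(K,v)$ in the language extended by parameters, hence it is a $1$-type over $(K,v)$.

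Next, I would invoke the standard existence theorem for saturated models to produce a $\card(K)^+$-saturated elementary extension $(K^*,v)$ of $(K,v)$. By saturation, every $1$-type over $(K,v)$ is realized in $(K^*,v)$; in particular, there exists $x\in K^*$ realizing $\cT_{\bf A}$. By assertion~2 of the preceding proposition, any such $x$ satisfies $\appr_v(x,K)={\bf A}$.

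It remains to show $x$ is transcendental over $K$. Since {\bf A} is non-trivial, it is not realized by any element of $K$, so $x\notin K$. Now if $x$ were algebraic over $K$, let $p(X)\in K[X]$ be its minimal polynomial and let $k=\deg p$. The $\cL$-sentence with parameters from $K$ expressing ``$p$ has exactly $k$ distinct roots'' holds in $K$ (over the algebraic closure we have $k$ roots, but we only need that the number of roots of $p$ in the given structure is the same in $K$ and in $K^*$); by elementarity the same number of roots of $p$ exists in $K^*$, and they are precisely the roots already present in $K$. Hence every root of $p$ in $K^*$ lies in $K$, contradicting $x\notin K$. Therefore $x$ is transcendental over $K$, which completes the proof.

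The only substantive content is the preceding proposition, which packages {\bf A} as an explicit, finitely realizable set of formulas; once that is in hand, the theorem is a routine compactness--saturation argument. The main obstacle to anticipate, if one tried to prove this without the proposition, would be producing such an explicit finitely realizable description of {\bf A} in the three different cases (immediate, value-extending, residue-extending), since in the last two cases one needs the density or infiniteness hypothesis precisely to ensure finite realizability inside $(K,v)$.
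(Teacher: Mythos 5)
Your proposal is correct and takes essentially the same route as the paper: invoke the preceding proposition for finite realizability of $\cT_{\bf A}$, realize the resulting $1$-type in a $\card(K)^+$-saturated elementary extension, and deduce transcendence from the standard fact that an elementary extension contains no new roots of polynomials over $K$. The paper merely compresses the model-theoretic steps into citations of well-known facts, while you spell them out; the substance is identical.
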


\bn
%
%
%
\section{Pure and almost pure extensions}
In this section we work with a fixed valued field $(K,v_0)$ and an element 
$x$ that is transcendental over $K$. 

%
%
%
\subsection{Pure extensions}    \label{sectpure}
\mbox{ }\sn
Take an arbitrary extension $(K(x)|K,v)$ and
$t\in K(x)$. If $vt$ is not a torsion element modulo $v_0 K$, then
$t$ will be called a \bfind{value-transcendental element}. If $vt=0$
and $tv$ is transcendental over $Kv_0$, then $t$ will be called a
\bfind{residue-transcendental element}. Further, $t$ will be called a
\bfind{valuation-transcendental element} if it is value-transcendental
or residue-trans\-cendental. 
In \cite{KTrans} we defined an extension $(K(x)|K,v)$
to be \bfind{pure} ({\bf in} $x$), if one of the following cases holds:
\sn
$\bullet$ \ for some $b,d\in K$, $d(x-b)$ is valuation-transcendental,
\sn
$\bullet$ \ $\appr_v(x,K)$ is a transcendental immediate approximation type.
\sn
Note that if $d(x-c)$ is value-transcendental, then we may in fact choose $d=1$.

If the extension $(K(x)|K,v)$ is pure, then we will also say that $v$ is a \bfind{pure 
extension} of $v_0$ from $K$ to $K(x)$.
\begin{lemma}                               \label{purevgrf}
Take any extension $(K(x)|K,v)$ and $b,d\in K$. 
\sn
1) Assume that $d(x-b)$ is value-transcendental. Then $\appr_v(x,K)$ is 
value-extending, and the valuation $v$ on $K(x)$ is uniquely determined by 
$(K,v_0)$ and the value $v(x-b)$. Further, we have that $vK(x)=v_0K\oplus\Z v(x-b)$ 
and that $K(x)v=Kv_0\,$.
\sn
2) Assume that $d(x-b)$ is residue-transcendental. Then $\appr_v(x,K)$ is 
residue-extending, and the valuation $v$ on $K(x)$ is uniquely
determined by $(K,v_0)$ and the fact that $d(x-b)v$ is transcendental over $Kv_0\,$.
Further, we have that $vK(x)=v_0K$ and that $K(x)v=Kv_0(d(x-b)v)$ is 
a rational function field over $Kv_0\,$. 
\sn
In both cases, $v_0K$ is pure in $vK(x)$ (i.e., $vK(x)/v_0 K$
is torsion free), and $Kv_0$ is relatively algebraically closed in $K(x)v$.
\end{lemma}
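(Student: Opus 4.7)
The plan is to deduce everything from Proposition~\ref{prelBour} together with Lemma~\ref{charvere}; the structural claims are essentially direct consequences of the former, the approximation-type claims of the latter. First I would note that in Part~1, since $d\in K^\times$ forces $vd\in v_0K$, the element $vd(x-b)=vd+v(x-b)$ is non-torsion modulo $v_0K$ if and only if $v(x-b)$ is, so I may replace $d(x-b)$ by $x-b$ at the outset. In Part~2 the factor $d$ has to be kept.

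For Part~1 I would set $z:=x-b$, which is transcendental over $K$ because $x$ is. By hypothesis $vz$ is rationally independent over $v_0K$, so Proposition~\ref{prelBour} applied with $I=\{1\}$, $x_1=z$ and $J=\emptyset$ forces
\[
vf(z)\>=\>\min_i v_0c_i+ivz
\]
for every $f=\sum c_iX^i\in K[X]$. This simultaneously yields the uniqueness of $v$ (given $v_0$ and $vz=v(x-b)$), the decomposition $vK(x)=v_0K\oplus\Z v(x-b)$, and the equality $K(x)v=Kv_0$. Since $v(x-b)\notin v_0K$, Lemma~\ref{charvere}(1) immediately gives that $\appr_v(x,K)$ is value-extending.

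For Part~2 I would set $y:=d(x-b)$, again transcendental over $K$. By hypothesis $vy=0$ and $yv$ is transcendental over $Kv_0$, so Proposition~\ref{prelBour} applied with $I=\emptyset$ and $J=\{1\}$, $y_1=y$, shows that $v$ on $K(y)=K(x)$ is uniquely determined by $v_0$ together with the datum that $yv$ is transcendental over $Kv_0$, and it delivers $vK(x)=v_0K$ and $K(x)v=Kv_0(yv)$ as a rational function field. Since $yv\notin Kv_0$, Lemma~\ref{charvere}(2) gives that $\appr_v(x,K)$ is residue-extending.

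For the two final assertions, in Part~1 the quotient $vK(x)/v_0K\cong\Z$ is torsion-free and $K(x)v=Kv_0$, so both purity and relative algebraic closure are trivial; in Part~2, $vK(x)=v_0K$ makes the purity statement trivial, and $Kv_0$ is relatively algebraically closed in the purely transcendental extension $Kv_0(yv)$ by a standard fact. I do not anticipate a genuine obstacle here—the work is bookkeeping—but the step requiring the most care is checking that the hypotheses of Proposition~\ref{prelBour} are met in each case (rational independence of $v(x-b)$ over $v_0K$ in Part~1, and algebraic independence of the single residue $d(x-b)v$ over $Kv_0$ in Part~2), which are immediate from the definitions of value- and residue-transcendental elements.
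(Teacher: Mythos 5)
Your proposal is correct and follows essentially the same route as the paper: the paper also obtains the value-extending/residue-extending claims from Lemma~\ref{charvere} and derives all remaining assertions from Corollary~\ref{pBcor}, which is just the single-element specialization of Proposition~\ref{prelBour} that you invoke directly. Your additional checks (reducing to $d=1$ in the value-transcendental case, verifying the independence hypotheses, and the torsion-freeness/relative algebraic closure at the end) are exactly the bookkeeping the paper leaves implicit.
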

\begin{proof}
Lemma~\ref{charvere} shows that $\appr_v(x,K)$ is value-extending if $d(x-b)$ 
is value-transcendental, and that $\appr_v(x,K)$ is residue-extending if $d(x-b)$ 
is residue-transcendental.
All remaining assertions follow from Corollary~\ref{pBcor}.
\end{proof}

\parm
Here is the ``prototype'' of pure extensions:
\begin{proposition}                               \label{acpure}
If $K$ is algebraically closed and $x$ is transcendental over $K$, 
then every extension $(K(x)|K,v)$ is pure.
\end{proposition}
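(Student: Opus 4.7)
Since $K$ is algebraically closed, $v_0K$ is divisible and $Kv_0$ is algebraically closed, so the extension $(K(x)|K,v)$ falls into exactly one of the three mutually exclusive cases listed in the introduction: immediate, value-transcendental, or residue-transcendental. I will establish purity in each of these cases, after first proving the following subclaim, which is where the hypothesis is really used: \emph{if $\appr_v(x,K)$ is immediate, then it is transcendental}. Indeed, take any nonzero $f\in K[X]$ and factor $f(X)=a\prod_{i=1}^n (X-a_i)$ with $a, a_i\in K$. Each $v(x-a_i)\in v(x-K)$; since this set has no maximum (Lemma~\ref{imme}(2)), I pick $c_i\in K$ with $v(x-c_i)>v(x-a_i)$, choose $\delta\in\supp\,\appr_v(x,K)$ larger than all $v(x-c_i)$, and observe that for every $c\in\appr_v(x,K)_\delta$ the ultrametric triangle law yields $v(c-a_i)=v(x-a_i)$. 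Hence $vf(c)=va+\sum_i v(x-a_i)$ is constant on $\appr_v(x,K)_\delta$, so $\appr_v(x,K)$ fixes the value of $f$.

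In the immediate case, Lemma~\ref{imme}(4) gives that $\appr_v(x,K)$ is immediate, and the subclaim makes it transcendental; the extension is therefore pure by definition. In the value-transcendental case, $vK(x)/v_0K$ has rational rank $1$, so there exists $t\in K(x)$ with $vt\notin v_0K$. Writing $t=f(x)/g(x)$ and factoring $f,g$ completely over $K$, the value $vt$ is a $\Z$-linear combination of terms $v(x-\alpha)$ together with an element of $v_0K$; so some $v(x-b)\notin v_0K$. Since $v_0K$ is divisible, the relation $n\alpha\in v_0K$ with $n\neq 0$ forces $\alpha\in v_0K$ (one extracts an $n$th root in $v_0K$ and uses that ordered abelian groups are torsion-free), so $v(x-b)$ is not torsion modulo $v_0K$; thus $x-b$ is value-transcendental and the extension is pure (with $d=1$).

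In the residue-transcendental case, $vK(x)=v_0K$. If $v(x-K)$ had no maximum, then by Lemma~\ref{imme}(2) $\appr_v(x,K)$ would be immediate, hence transcendental by the subclaim; Corollary~\ref{corKT2} would then force $(K(x)|K,v)$ to be an immediate extension, contradicting the residue-transcendental hypothesis. So $v(x-K)$ attains its maximum at some $\gamma=v(x-b)$, and $\gamma\in v_0K$ because $vK(x)=v_0K$. Choose $d\in K$ with $vd=-\gamma$, so $vd(x-b)=0$. By part~5) of Lemma~\ref{maxapp}, $d(x-b)v\notin Kv_0$; since $Kv_0$ is algebraically closed, this residue is transcendental over $Kv_0$. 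Hence $d(x-b)$ is residue-transcendental, and the extension is pure.

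The main obstacle is the residue-transcendental case, where one needs to rule out immediate approximation types in order to locate a maximum of $v(x-K)$. This is precisely where the subclaim combined with Corollary~\ref{corKT2} does the work; the algebraic closure of $K$ enters decisively, both through the factorization argument in the subclaim and through the algebraic closure of $Kv_0$ that upgrades ``not in $Kv_0$'' to ``transcendental over $Kv_0$''.
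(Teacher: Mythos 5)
Your proof is correct, but it is organized differently from the paper's. The paper splits on whether $v(x-K)$ has a maximum: if not, $\appr_v(x,K)$ is immediate by Lemma~\ref{imme}, and Theorem~\ref{KT3at} rules out the algebraic case (an associated minimal polynomial would be linear over an algebraically closed field, forcing the type to be realized in $K$ and hence trivial); if a maximum $v(x-b)$ exists, part~5) of Lemma~\ref{maxapp} gives either $v(x-b)\notin v_0K$ or $d(x-b)v\notin Kv_0$, and divisibility of $v_0K$ resp.\ algebraic closedness of $Kv_0$ upgrade these to value- resp.\ residue-transcendence. You instead split along the Abhyankar trichotomy for the extension itself and prove the key subclaim (immediate $\Rightarrow$ transcendental) by hand via complete factorization of $f$ over $K$ and the ultrametric identity $v(c-a_i)=v(x-a_i)$ for $c$ deep enough in the nest; this is a nice self-contained substitute for the appeal to Theorem~\ref{KT3at}, and your factorization of $t=f(x)/g(x)$ to locate $b$ with $v(x-b)\notin v_0K$ is likewise sound (divisibility plus torsion-freeness of ordered abelian groups then gives non-torsion). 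The costs of your route are that it leans on the trichotomy from the introduction (hence on Proposition~\ref{prelBour}) and on Corollary~\ref{corKT2} to exclude the no-maximum situation in the residue-transcendental case, whereas the paper's case split on $\max v(x-K)$ avoids both and treats the two transcendental subcases uniformly through Lemma~\ref{maxapp}. Both arguments use the hypothesis in the same two essential places: to force immediate approximation types to be transcendental, and to convert ``outside $v_0K$'' and ``outside $Kv_0$'' into ``non-torsion'' and ``transcendental''.
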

\begin{proof}
Assume first that the set $v(x-K)$ has no maximum. Then by part 2) of Lemma~\ref{imme},
$\appr_v(x,K)$ is immediate. Since $K$ is algebraically closed, Theorem~\ref{KT3at} 
shows that $\appr_v(x,K)$ must be transcendental. 

\pars
Now assume that the set $v(x-K)$ has a maximum, say, $v(x-b)$ with $b\in K$. 
Then by part 5) of Lemma~\ref{maxapp}, $v(x-b)\notin v_0 K$ or there is $d\in K$ 
such that $(d(x-b))v\notin Kv_0$. Since $K$  is algebraically closed, $v_0 K$ is divisible.  
Hence if $v(x-b)\notin v_0 K$, then it cannot be a torsion element modulo $v_0 K$, which implies 
that $(K(x)|K,v)$ is value-transcendental.

Since $K$  is algebraically closed, $Kv_0$ is also algebraically closed. Hence if the 
residue $(d(x-b))v$ is not in $Kv_0$, then it must be transcendental over $Kv_0$, which implies 
that $(K(x)|K,v)$ is residue-transcendental.

In all three cases, the extension is pure by definition.
\end{proof}

We will call {\bf A} a \bfind{pure approximation type} if it is a transcendental immediate,
value-extending or residue-extending approximation type. We denote by $\cV_p$ the set of 
all pure extensions of $v_0$ to $K(x)$ and by $\cA_p$ the set of all pure approximation 
types. For pure extensions, a stronger form of part 5) of Lemma~\ref{niatl} holds:
%
\begin{theorem}                       \label{pureextat}
For a pure extension $(K(x)|K,v)$, there are the following three mutually exlusive 
cases: immediate, value-transcendental, residue-transcendental, and the following assertions hold:
\sn
1) The extension is immediate if and only if $\appr_v(x,K)$ is immediate. In this 
case, the extension $v$ to $K(x)$ is uniquely determined by $\appr_v(x,K)$.
\sn
2) The extension is value-transcendental if and only if $x-b$ is value-transcendental 
for some $b\in K$, and this holds if and only if $\appr_v(x,K)$ is value-extending.
If in addition $v_0 K$ is divisible, then the pure extension $v$ to $K(x)$ is uniquely 
determined by $\appr_v(x,K)$.
\sn
3) The extension is residue-transcendental if and only if $d(x-b)$ is 
residue-transcen\-dental for some $b,d\in K$, and this holds if and only if 
$\appr_v(x,K)$ is residue-extending. In this 
case, the pure extension $v$ to $K(x)$ is uniquely determined by $\appr_v(x,K)$.
\sn
4) If $v_0 K$ is divisible, then the function 
\begin{equation}                                   \label{vp->atp}
\cV_p\>\longrightarrow \cA_p\>, \qquad  v\,\mapsto\, \appr_v (x,K)  
\end{equation}
is a bijection.
\end{theorem}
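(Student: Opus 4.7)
My plan is to reduce the four assertions to a combination of the already established results, mainly Lemma~\ref{purevgrf}, Lemma~\ref{charvere}, Corollary~\ref{corKT2}, Corollary~\ref{pBcor}, Lemma~\ref{exteq}, Theorem~\ref{MTatenh1}, and Lemma~\ref{niatl}.

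The key preliminary observation I will establish first is the three-way dichotomy. From the classification of extensions the three cases immediate, value-transcendental, residue-transcendental for $(K(x)|K,v)$ are pairwise incompatible, and correspondingly Lemma~\ref{niatl}(5) tells us that the immediate, value-extending, and residue-extending properties of $\appr_v(x,K)$ are pairwise incompatible. For a pure extension, the defining trichotomy (\emph{$\appr_v(x,K)$ transcendental immediate}, $d(x-b)$ \emph{value-transcendental}, $d(x-b)$ \emph{residue-transcendental}) matches these three cases: Corollary~\ref{corKT2} shows the first produces an immediate extension; Lemma~\ref{purevgrf}(1) and (2) show the second and third produce value- and residue-transcendental extensions respectively. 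Hence the three cases of the theorem are exhaustive and exclusive, and on each case $\appr_v(x,K)$ has the correspondingly named type.

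For the three equivalences in (1), (2), (3) I will argue in each case by elimination using the trichotomy above. For (1): if $(K(x)|K,v)$ is immediate then by purity neither the value- nor residue-transcendental defining case can hold (they alter the value group or residue field), so $\appr_v(x,K)$ is transcendental immediate; the converse and the uniqueness come from Corollary~\ref{corKT2}. For (2): value-transcendentality forces the value-transcendental case of purity, hence $x-b$ itself is value-transcendental since $vd\in v_0K$, and then Lemma~\ref{purevgrf}(1) gives the value-extending property; conversely, if $\appr_v(x,K)$ is value-extending, Lemma~\ref{charvere}(1) yields $b\in K$ with $v(x-b)\notin v_0K$, and the trichotomy excludes the immediate and residue-transcendental alternatives. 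For (3) I argue symmetrically: residue-extending gives $b,d$ with $vd(x-b)=0$ and $d(x-b)v\notin Kv_0$ via Lemma~\ref{charvere}(2), and purity combined with the trichotomy forces the residue-transcendental case; then Lemma~\ref{purevgrf}(2) shows $Kv_0$ is relatively algebraically closed in $K(x)v$, so $d(x-b)v$ must in fact be transcendental over $Kv_0$.

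For the uniqueness statements: in (1) it is Corollary~\ref{corKT2}. In (2), assuming $v_0K$ divisible, pick $b\in\bigcap\appr_v(x,K)$; Lemma~\ref{charvere}(1) identifies $v(x-b)$ as realizing the cut $(\supp{\bf A},\,v_0K\setminus\supp{\bf A})$, and since $v_0K$ is divisible, $v(x-b)\notin v_0K$ is automatically non-torsion modulo $v_0K$, so Lemma~\ref{exteq}(2) says the extension is uniquely determined up to equivalence over $v_0$ by this cut. In (3) pick $b\in\bigcap\appr_v(x,K)$ and set $\delta=\max\supp{\bf A}=v(x-b)\in v_0K$; then by Corollary~\ref{pBcor} (applied with $z=x-b$ and $\alpha=\delta$) the valuation on $K(x)$ is uniquely determined by the requirement that $d(x-b)v$ be transcendental over $Kv_0$ for some (equivalently every) $d$ with $vd=-\delta$, and this condition is forced by purity as explained above.

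Part (4) will then follow by packaging. Surjectivity: given any pure approximation type ${\bf A}$, Theorem~\ref{MTatenh1}(1)(3)(4) produces an extension realizing ${\bf A}$, and I will verify that in each case the constructed extension is pure --- this is immediate when ${\bf A}$ is transcendental immediate; when ${\bf A}$ is value-extending the construction assigns $v(x-b)=\alpha\notin v_0K$, which is non-torsion modulo $v_0K$ by divisibility, so $x-b$ is value-transcendental; when ${\bf A}$ is residue-extending the construction makes $d(x-b)$ residue-transcendental directly by Corollary~\ref{pBcor}. Injectivity is now just the uniqueness statements in parts (1), (2), (3). The most delicate point --- the main obstacle --- is the uniqueness in part (2), where one must invoke divisibility of $v_0K$ via Lemma~\ref{exteq} to rule out inequivalent choices of $\alpha$ realizing the same cut; without divisibility, torsion elements modulo $v_0K$ could give inequivalent value-transcendental extensions with identical approximation type.
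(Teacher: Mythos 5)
Your proposal is correct and follows essentially the same route as the paper's proof: the trichotomy is matched case by case via Lemma~\ref{purevgrf} and Lemma~\ref{charvere}, uniqueness in the immediate case comes from Corollary~\ref{corKT2}, in the value-transcendental case from the cut argument plus Lemma~\ref{exteq} under divisibility, in the residue-transcendental case from the relative algebraic closedness of $Kv_0$ forcing transcendence of the residue, and part 4) is assembled from Theorem~\ref{MTatenh1} together with these uniqueness statements. No gaps; the argument matches the paper's in both structure and the lemmas invoked.
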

\begin{proof}
1): If $(K(x)|K,v)$ is immediate, then $\appr_v(x,K)$ is immediate by part 4) of
Lemma~\ref{imme}. For the converse, assume that $\appr_v(x,K)$ is immediate. If 
there is a valuation-transcendental element $d(x-b)$, then 
Lemma~\ref{charvere} shows that $\appr_v(x,K)$ is value-extending or 
residue-extending, hence not immediate. Thus by the definition of ``pure extension'', 
$\appr_v(x,K)$ must be transcendental immediate. By Corollary~\ref{corKT2}, this 
implies that $(K(x)|K,v)$ is immediate, and that the valuation $v$ on $K(x)$ is uniquely 
determined by $\appr_v(x,K)$.
\sn
2)\&3): Assume that $(K(x)|K,v)$ is valuation-transcendental. Then it is not immediate, 
so by Corollary~\ref{corKT2}, $\appr_v(x,K)$ is not immediate. Hence
by the definition of ``pure extension'', there are $b,d\in K$ such that $d(x-b)$ is
valuation-transcendental. Assume that $(K(x)|K,v)$ is value-transcendental. Then this 
element cannot be residue-transcendental because 
otherwise from part 2) of Lemma~\ref{purevgrf} it would follow that $(K(x)|K,v)$ is
not value-transcendental. Therefore, $d(x-b)$ is value-transcendental. From this it 
follows by part 1) of Lemma~\ref{purevgrf} that $\appr_v(x,K)$ is value-extending.

Assume that $(K(x)|K,v)$ is residue-transcendental. Similarly as before, one now concludes 
that $d(x-b)$ is residue-transcendental. From this it follows by part 2) of
Lemma~\ref{purevgrf} that $\appr_v(x,K)$ is residue-extending.

Assume now that $\appr_v(x,K)$ is value-extending. Then by part 5) of Lemma~\ref{niatl},
it cannot be immediate. Then by 
the definition of ``pure extension'', there are $b,d\in K$ such that $d(x-b)$ is
valuation-transcendental. If it were residue-transcendental, then by what we have 
already shown, $\appr_v(x,K)$ were residue-extending, a contradiction. Hence $d(x-b)$ is
value-transcendental, which implies that $(K(x)|K,v)$ is value-transcendental. 

In order to show the uniqueness statement, assume that $v_0 K$ is divisible.
Since ${\bf A}:=\appr_v(x,K)$ is value-extending, we know from part 1) of 
Lemma~\ref{charvere} that for an arbitrarily chosen $b\in\bigcap{\bf A}$, $v(x-b)$ realizes 
the cut $(\supp {\bf A}\,,\,v_0 K\setminus\supp{\bf A})$ in $v_0 K$. If also $v'$ is an
extension to $K(x)$ with $\appr_{v'}(x,K)=\appr_v(x,K)$, then again by part 1) of 
Lemma~\ref{charvere}, $v'(x-b)$ realizes the cut $(\supp {\bf A}\,,\,v_0 K\setminus
\supp{\bf A})$. It follows from part 2) of Lemma~\ref{exteq} that $v$ and $v'$ are 
equivalent over $v_0\,$, and as we identify equivalent valuations, $v=v'$.
This finishes the proof of part 2). 

Finally, assume that $\appr_v(x,K)$ is residue-extending. 
Interchanging ``residue-trans\-cendental'' and ``value-transcendental'' in the proof we 
just gave, we obtain that $(K(x)|K,v)$ is residue-transcendental. In order to show the
uniqueness statement, take another pure extension $v'$ such that ${\bf A}:=\appr_v(x,K)=
\appr_{v'}(x,K)$. From part 1) of Lemma~\ref{charvere} we infer that for an arbitrarily 
chosen $b\in\bigcap {\bf A}$ we have that $v(x-b)=\max\supp {\bf A}=v'(x-b)$ and that 
for $d\in K$ with $vd=-\max\supp {\bf A}$, $d(x-b)v$ and $d(x-b)v'$ do not lie in 
$Kv_0\,$. Since both extensions are pure, we know from Lemma~\ref{purevgrf} that $Kv_0$ 
is relatively algebraically closed in $K(x)v$ and in $K(x)v'$, which shows that both 
$d(x-b)v$ and $d(x-b)v'$ are transcendental over $Kv_0\,$. We set $y=d(x-b)$ and obtain 
from Proposition~\ref{prelBour} that $v=v'$ on $K(y)=K(x)$. This
finishes the proof of part 3). 


\pars
Part 5) of Lemma~\ref{niatl} shows that the properties ``immediate approximation type'', 
``value-extending approximation type'' and ``residue-extending approximation type'' are
mutually exclusive. Hence by what we have proved so far, for pure extensions the properties
``immediate'', ``value-transcendental'' and ``residue-transcen\-dental'' are mutually
exclusive.

\sn
4): If {\bf A} is a transcendental immediate approximation type, then by 
Theorem~\ref{KT2at} and Corollary~\ref{corKT2}
there is a unique extension $v$ of $v_0$ to $K(x)$ such that $\appr_v(x,K)={\bf A}$. 
It follows that $v\in\cV_p\,$.

Now assume that {\bf A} is a value- or residue-extending approximation type. By 
Theorem~\ref{MTatenh1} there is an extension $(K(x)|K,v)$ such that $\appr_v(x,K)={\bf A}$,
and we can choose it to be value-transcendental if {\bf A} is a value-extending, and to be
residue-transcendental if {\bf A} is a residue-extending. In both cases, $v\in\cV_p\,$. 
This proves the surjectivity of the function (\ref{vp->atp}). The injectivity follows from 
what we have proven in parts 1), 2) and 3).
\end{proof}

\begin{remark}
Without the assumption that $v_0 K$ is divisible, the uniqueness statement in the 
value-transcendental case can in general not be achieved. Assume that $\gamma\in
v_0 K$ is not divisible by some $n\in\N$. Then the cut induced by $\frac{\gamma}{n}$ in
$v_0 K$ may be equal to the cut $(\supp {\bf A}\,,\, v_0 K\setminus\supp {\bf A})$. We can 
fill this cut with an element $\alpha$ that is not torsion modulo $v_0 K$ by choosing a
positive infinitesimal $\iota$ and setting $\alpha=\frac{\gamma}{n}-\iota$ or 
$\alpha=\frac{\gamma}{n}+\iota$. Assigning $\alpha$ to $z$ as in Corollary~\ref{pBcor}
will lead to two distinct extensions: if $c\in K$ with $vc=-\gamma$, then in the first 
case, $vcz^n=-n\iota<0$, and in the second case, $vcz^n=n\iota>0$.
\end{remark}

\mn
%
%
%
\subsection{Almost pure extensions}    \label{sectalpure}
\mbox{ }\sn
In this section we generalize the notion ``pure extension'' in order to also capture the 
case where the base field $(K,v_0)$ lies dense in its algebraic closure, i.e., the
algebraic closure $\tilde K$ lies in the completion of $(K,v_0)$ (and consequently, 
the completion is itself 
algebraically closed). First, we prove that this property does not depend on the 
chosen extension of $v_0$ to $\tilde{K}$:
\begin{lemma}                                 \label{densext}
Take a valued field $(K,v_0)$ and extensions $v_1$ and $v_2$ of $v_0$ to $\tilde{K}$. Then 
$(K,v_0)$ lies dense in $(\tilde{K},v_1)$ if and only if it lies dense in $(\tilde{K},v_2)$.
\end{lemma}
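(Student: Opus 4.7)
The plan is to exploit the fact, already invoked in the proof of Theorem~\ref{MTatenh1}, that any two extensions of $v_0$ from $K$ to $\tilde K$ are conjugate: there exists $\sigma\in\Aut\tilde K|K$ with $v_2=v_1\circ\sigma$. Since $\sigma$ fixes $K$ pointwise and maps $\tilde K$ onto itself, we have $v_2\tilde K=v_1(\sigma\tilde K)=v_1\tilde K$; the two value groups coincide as subsets of any common ordered group containing them, so radii may be moved freely between the two valuations.

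Assuming that $(K,v_0)$ lies dense in $(\tilde K,v_1)$, I would proceed as follows. Take an arbitrary $a\in\tilde K$ and $\gamma\in v_2\tilde K=v_1\tilde K$. Applying density for $v_1$ to the element $\sigma a\in\tilde K$ with radius $\gamma$, obtain $c\in K$ such that $v_1(\sigma a-c)>\gamma$. Since $\sigma$ fixes $c$, we have $\sigma(a-c)=\sigma a-c$, whence
\[
v_2(a-c)\;=\;v_1(\sigma(a-c))\;=\;v_1(\sigma a-c)\;>\;\gamma.
\]
This shows that $(K,v_0)$ lies dense in $(\tilde K,v_2)$. The converse direction follows by the symmetric argument, replacing $\sigma$ by $\sigma^{-1}$ (so that $v_1=v_2\circ\sigma^{-1}$).

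There is no substantial obstacle here; once the conjugacy of extensions of $v_0$ to $\tilde K$ is invoked, the proof is essentially a change of variables through the automorphism $\sigma$. The only subtlety worth flagging is the remark that $v_1\tilde K=v_2\tilde K$, so that a prescribed approximation radius for one valuation is automatically available for the other.
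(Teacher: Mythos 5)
Your proof is correct and is essentially the same as the one in the paper: both invoke the conjugacy of extensions of $v_0$ to $\tilde K$, apply density for $v_1$ to $\sigma a$, and transport the inequality back via $\sigma(a-c)=\sigma a-c$. The only cosmetic difference is that the paper justifies $v_1\tilde K=v_2\tilde K$ by observing that both equal the divisible hull of $v_0K$, whereas you deduce it directly from the automorphism; either justification is fine.
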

\begin{proof}
Assume that $(K,v_0)$ lies dense in $(\tilde{K},v_1)$. Take $a\in \tilde{K}$ and $\alpha
\in v_2\tilde K$. As both $v_1\tilde K$ and $v_2\tilde K$ equal the divisible hull of 
$v_0 K$, we know that $\alpha\in v_1\tilde K$. Since all extensions of $v_0$ to $\tilde K$
are conjugate, there is an automorphism $\sigma$ of $\tilde{K}|K$ such that $v_2=
v_1\circ\sigma$. By assumption, there is $c\in K$ such that $\alpha<v_1(\sigma a -c)=
v_1\circ\sigma(a-c)=v_2(a-c)$. This proves that $(K,v_0)$ lies dense in $(\tilde{K},
v_2)$. By symmetry, also the converse holds, which proves our assertion.
\end{proof}

We define the extension $(K(x)|K,v)$ to be \bfind{almost pure}
({\bf in} $x$) if it is pure or $\appr_v(x,K)$ is an algebraic completion type.

\begin{proposition}                               \label{denspure}
If $(K,v_0)$ lies dense in its algebraic closure, then $v_0 K$ is divisible, $Kv_0$ is
algebraically closed, and every extension $(K(x)|K,v)$ is almost pure.
\end{proposition}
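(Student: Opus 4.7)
The plan is as follows. First, the density hypothesis forces $\tilde K|K$ to be immediate: for any nonzero $a\in\tilde K$, picking $c\in K$ with $v(a-c)>va$ gives $vc=va\in v_0 K$ and (when $va=0$) $cv=av\in Kv_0$. Hence $v_0 K=v\tilde K$ is divisible and $Kv_0=\tilde K v$ is algebraically closed, settling the first two assertions. For the main claim, fix an extension $(K(x)|K,v)$ and extend $v$ to $\tilde K(x)$. By Proposition \ref{acpure}, $(\tilde K(x)|\tilde K,v)$ is pure, so by Theorem \ref{pureextat} it is immediate, residue-transcendental, or value-transcendental.

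In the immediate case, $\appr_v(x,\tilde K)$ is transcendental immediate, and Proposition \ref{cutdown} (applicable since $x\notin\tilde K$ and $K$ is dense in $\tilde K$) transfers these properties to $\appr_v(x,K)$, making $(K(x)|K,v)$ pure. In the residue-transcendental case, Lemma \ref{charvere} yields $b,d\in\tilde K$ with $vd(x-b)=0$ and $d(x-b)v\notin Kv_0$; setting $\delta:=v(x-b)=-vd\in v_0 K$, density supplies $b'\in K$ with $v(b-b')>\delta$, so $v(x-b')=\delta$, and a $d'\in K$ with $vd'=-\delta$ is chosen. Since $d'(b-b')$ has positive value, $d'(x-b')v=(d'd^{-1}v)(d(x-b)v)$; the first factor lies in $Kv_0\setminus\{0\}$ and the second is not in $Kv_0$, so their product is not in $Kv_0$. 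By Lemma \ref{charvere}, $\appr_v(x,K)$ is residue-extending, and the extension is pure.

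In the value-transcendental case, pick $b\in\tilde K$ with $\alpha:=v(x-b)\notin v_0 K$, and consider the cut $(D,E)$ of $\alpha$ in $v_0 K$. If $E\neq\emptyset$, choose $\gamma\in E$ and use density to find $b'\in K$ with $v(b-b')>\gamma>\alpha$, so that $v(x-b')=\alpha\notin v_0 K$; by Lemma \ref{charvere}, $\appr_v(x,K)$ is value-extending and the extension is pure. The main obstacle is the opposite subcase $E=\emptyset$, i.e., $\alpha>v_0 K$, where the value-extending character of $\appr_v(x,\tilde K)$ does not survive descent to $K$. Here the plan is to prove that $\appr_v(x,K)$ is an algebraic completion type.

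To establish this last step, I argue on three fronts. Support: for every $\gamma\in v_0 K$, density furnishes $c\in K$ with $v(b-c)>\gamma$, whence $v(x-c)\geq\min\{\alpha,v(b-c)\}>\gamma$, so $\supp\appr_v(x,K)=v_0 K$. Immediacy: if $c\in\bigcap\appr_v(x,K)$, then $v(x-c)>v_0 K$, but $v(b-c)\in v\tilde K=v_0 K<\alpha$ forces $v(x-c)=v(b-c)\in v_0 K$, a contradiction. Algebraicity: if $\appr_v(x,K)$ were transcendental, Corollary \ref{corKT2} would render $(K(x)|K,v)$ immediate with $vK(x)=v_0 K$, but the algebraic extension $\tilde K(x)|K(x)$ gives $n\alpha\in vK(x)=v_0 K$ for some $n\geq 1$, contradicting $\alpha>v_0 K$. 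Hence $\appr_v(x,K)$ is algebraic, immediate with full support $v_0 K$, i.e., an algebraic completion type, and $(K(x)|K,v)$ is almost pure.
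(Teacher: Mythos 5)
Your proof is correct and follows essentially the same route as the paper: establish that $\tilde K|K$ is immediate to obtain divisibility of $v_0K$ and algebraic closedness of $Kv_0$, invoke Proposition~\ref{acpure} over $\tilde K$, and descend case by case, approximating the parameters $b,d\in\tilde K$ by elements of $K$ using density. The only real divergence is in the subcase $v(x-b)>v_0K$: the paper identifies $\appr_v(x,K)$ with $\appr_v(b,K)$ via Corollary~\ref{v>L-imm} and reads off that the support is all of $v_0K$, whereas you verify support, immediacy and algebraicity of $\appr_v(x,K)$ by hand; your explicit algebraicity argument (if the type were transcendental, $vK(x)=v_0K$ would force $n\alpha\in v_0K$ for some $n\geq 1$, contradicting $\alpha>v_0K$) is a welcome addition, since the paper leaves implicit the observation that a transcendental completion type is already a pure approximation type. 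One pedantic point, which the paper's own proof also glosses over: your immediacy argument in that subcase tacitly assumes $b\notin K$ — if $c=b\in K$ no contradiction arises, and $\appr_v(x,K)$ is then value-extending rather than immediate. This is harmless, since for $b\in K$ the element $x-b$ is value-transcendental over $K$ (as $v_0K$ is divisible), so the extension is pure in that case anyway; a one-line remark would close it.
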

\begin{proof}
We extend $v$ from $K(x)$ to $\widetilde{K(x)}$. Then the restriction of $v$ to 
$\tilde K$ is an extension of $v$ from $K$ to $\tilde K$. Lemma~\ref{densext} shows
that $K$ lies dense in $({\tilde K},v)$. 

The completion of $(K,v_0)$ is an immediate extension, and it contains $({\tilde K},v)$. We
know that the value group of an algebraically closed valued field is divisible, and its 
residue field is algebraically closed. Hence the same holds for $v_0 K=v\tilde{K}$ and 
$Kv_0=\tilde{K}v$. 

From Proposition~\ref{acpure} we know that the extension $(\widetilde{K(x)}|\tilde{K},v)$ 
is pure. Assume first that $\appr_v (x,\tilde{K})$ is a transcendental immediate
approximation type. Then by Lemma~\ref{cutdown}, also $\appr_v (x,K)$ is a 
transcendental immediate approximation type and $(K(x)|K,v)$ is pure.
\pars
Now we consider the case where $d(x-b)$ is valuation-transcendental for some $b,d\in
\tilde K$. Assume that $vd(x-b)=0$ and $d(x-b)v$ is transcendental over $\tilde{K}v=
Kv_0\,$. Since $d\in \tilde K$, we know that $v(x-b)=-vd\in v\tilde{K}=v_0 K$.
We choose $d'\in K$ such that $v(d-d')>vd$ and $b'\in K$ such that $v(b-b')>v(x-b)$.
It follows that $vd=vd'$ and
\begin{eqnarray*}
v(d(x-b)-d'(x-b'))&\geq& \min\{v(d(x-b)-d'(x-b))\,,\,v(d'(x-b)-d'(x-b'))\}\\
&=&\min\{v(d-d')+v(x-b)\,,\,vd'+v(b-b')\}\\
&>& vd+v(x-b) \>=\>0\>.
\end{eqnarray*}
Therefore, $d'(x-b')v=d(x-b)v$ is transcendental over $Kv_0$. This shows that if  
$(\widetilde{K(x)}|\tilde{K},v)$ is residue-transcendental, then $(K(x)|K,v)$ is
residue-transcendental and pure.
\pars
Finally, assume that $vd(x-b)\notin v\tilde{K}=v_0 K$, in which case we can assume that 
$d=1$. We distinguish two cases:
\sn
Case 1: there is $\alpha\in v_0 K$ such that $\alpha>v(x-b)$. Then we choose $b'\in K$ 
such that $v(b-b')>\alpha$ and obtain that $v(x-b')=\min\{v(x-b),v(b-b')\}=v(x-b)
\notin v_0 K$. In this case, $(K(x)|K,v)$ is value-transcendental and pure.
\sn
Case 2: $v(x-b)>v_0 K$. Then by Corollary~\ref{v>L-imm}, $\appr_v(x,K)=\appr_v(b,K)$. This 
implies that $\supp\,\appr_v(x,K)=\supp\,\appr_v(b,K)=v_0 K$, so $(K(x)|K,v)$ is 
value-transcendental and almost pure. 
\end{proof}

\pars
We will call {\bf A} an \bfind{almost pure approximation type} if it is a pure 
approximation type or an algebraic completion type (note that every transcendental 
completion type is already a pure approximation type). We denote by $\cV_{ap}$ the set 
of all almost pure extensions of $v_0$ to $K(x)$ and by $\cA_{ap}$ the set of all almost 
pure approximation types. 
\begin{theorem}                       \label{alpureextat}
For an almost pure extension $(K(x)|K,v)$, there are the following three mutually exlusive 
cases: immediate, value-transcendental, residue-transcen\-dental, and parts 2), and 3)
of Theorem~\ref{pureextat} hold, as well as:
\sn
1') The extension $(K(x)|K,v)$ is immediate if and only if $\appr_v(x,K)$ is 
transcendental immediate.

If $\appr_v(x,K)$ is algebraic immediate, then the extension $(K(x)|K,v)$ is
value-transcendental.
\sn
4') If $v_0 K$ is divisible, then the function 
\begin{equation}                                   \label{vap->atap}
\cV_{ap}\>\longrightarrow \cA_{ap}\>, \qquad  v\,\mapsto\, \appr_v (x,K)  
\end{equation}
is a bijection.
\end{theorem}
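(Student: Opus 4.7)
The plan is to reduce Theorem~\ref{alpureextat} to Theorem~\ref{pureextat} by isolating the one new case: an almost pure extension that is not pure must have $\appr_v(x,K)$ an algebraic completion type. The central technical step is to show that this case always yields a value-transcendental extension; from this the three mutually exclusive cases, both assertions of (1'), and the preservation of parts 2) and 3) of Theorem~\ref{pureextat} follow, and (4') is obtained by combining Theorem~\ref{pureextat}(4) with the relevant parts of Theorem~\ref{MTatenh1}.

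For that technical step, given an almost pure extension $(K(x)|K,v)$ with ${\bf A}:=\appr_v(x,K)$ an algebraic completion type, I extend $v$ to $\widetilde{K(x)}$ and apply Theorem~\ref{KT3at} to an associated minimal polynomial of {\bf A} to produce an algebraic element $a\in\tilde{K}$ realizing {\bf A} (composing with an appropriate automorphism of $\tilde{K}|K$ as in the proof of Theorem~\ref{MTatenh1} so that the restriction of $v$ to $K(a)$ agrees with the extension supplied by Theorem~\ref{KT3at}). Corollary~\ref{v>L-imm} then gives $v(x-a)\geq\supp{\bf A}=v_0K$. Since $v_0K$ is nontrivial (otherwise {\bf A} could not be immediate) and unbounded in its divisible hull, $v(x-a)$ cannot lie in the divisible hull of $v_0K$ and hence not in $vK(a)$. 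Therefore $(K(a,x)|K(a),v)$ is value-transcendental, and since $K(a,x)|K(x)$ is algebraic, so is $(K(x)|K,v)$.

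For (1'), the forward direction uses Lemma~\ref{imme}(4) to see that an immediate extension has $\appr_v(x,K)$ immediate; the algebraic completion subcase is then excluded by the previous paragraph, so the extension is pure, and the definition of \emph{pure} combined with Theorem~\ref{pureextat}(1) forces $\appr_v(x,K)$ to be transcendental immediate. The converse is Theorem~\ref{pureextat}(1). The second assertion of (1') also follows from paragraph two, since an algebraic immediate approximation type cannot arise from a pure extension (\emph{pure} plus immediate approximation type forces the approximation type to be transcendental), leaving only the algebraic completion type subcase, which gives value-transcendental. Parts 2) and 3) of Theorem~\ref{pureextat} continue to describe the value- and residue-transcendental pure subcases and their uniqueness under divisibility of $v_0K$; the three mutually exclusive cases for almost pure extensions follow by combining these with the analysis above.

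For (4'), surjectivity is established one type at a time via Theorem~\ref{MTatenh1}: transcendental immediate, value-extending, and residue-extending approximation types are realized by pure (and so almost pure) extensions by parts (1), (3), (4); an algebraic completion type has $\supp{\bf A}=v_0K$ and is realized by the unique value-transcendental extension from part (2). For injectivity, if two almost pure extensions $v_1,v_2$ share the same approximation type {\bf A}, then (1') together with Theorem~\ref{pureextat} forces both extensions to belong to the same subcase (pure of a specific valuation-transcendental kind, or coming from an algebraic completion type), and the corresponding uniqueness statement from Theorem~\ref{pureextat}(4) or Theorem~\ref{MTatenh1}(2) finishes the argument. The main obstacle is the technical step in paragraph two, where one must carefully manage extensions of $v$ to $\tilde K$, conjugation, and the arithmetic of $v_0K$ inside its divisible hull.
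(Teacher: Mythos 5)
Your proposal is correct and follows essentially the same route as the paper: reduce to Theorem~\ref{pureextat} for the pure subcases, observe that the only new case is an algebraic completion type, and handle that case via Theorem~\ref{MTatenh1}(2) (whose value-transcendence claim you re-derive from Theorem~\ref{KT3at} and Corollary~\ref{v>L-imm} rather than simply citing, but by the same argument used in its proof), then assemble (4') by splitting $\cV_{ap}$ and $\cA_{ap}$ into the pure parts and the algebraic-completion-type parts. No gaps.
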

\begin{proof}
From Theorem~\ref{pureextat} we know that for pure extensions there are the three mutually
exlusive cases immediate, value-transcendental, residue-transcendental. The only almost 
pure extensions that are not pure occur when the approximation type is an algebraic 
completion type. In this case we know from part 2) of Theorem~\ref{MTatenh1} that if 
$(K(x)|K,v)$ is an extension in which $x$ realizes the approximation type, then it must be 
value-transcendental. The latter also proves the second assertion of part 1'). 
Parts 2), and 3) of Theorem~\ref{pureextat} hold because in these cases the extensions 
are value-transcendental and residue-transcendental, respectively, hence pure. We see that 
by the definition of almost pure extensions, the only remaining case where $(K(x)|K,v)$ can 
be immediate occurs when $\appr_v(x,K)$ is transcendental immediate. Conversely, when the 
latter is the case, then by Corollary~\ref{corKT2}, $(K(x)|K,v)$ is immediate. This finishes
the proof of part 1').

\pars
Now we prove part 4'). In view of the bijection stated in part 4) of
Theorem~\ref{pureextat}, we only have to deal with the valuations in $\cV_{ap}\setminus
\cV_p$ and the approximation types in $\cA_{ap}\setminus\cA_p$. If ${\bf A}\in \cA_{ap}
\setminus\cA_p\,$, then it is an algebraic completion type, and by part 2) of
Theorem~\ref{MTatenh1}, it is realized by a uniquely
determined extension $v$; by definition, $v\in \cV_{ap}\setminus\cV_p\,$. Hence 
$\cV_{ap}\setminus\cV_p\ni v\mapsto\appr_v (x,K)\in\cA_{ap}\setminus\cA_p$
is a bijection, which finishes the proof of part 4').
\end{proof}

\mn
%
%
%
\subsection{Proof of Theorem~\ref{MTat2}}    \label{sectmth2}
\mbox{ }\sn
The following is a more precise version of Theorem~\ref{MTat2}.
\begin{theorem}                       \label{MTatenh2}
Assume that $K$ is algebraically closed, or that $(K,v_0)$ lies dense in its algebraic
closure. Then the function (\ref{v->at}) is a bijection. If $K$ is algebraically closed, 
then all assertions of Theorem~\ref{pureextat} hold. If $(K,v_0)$ lies dense in its 
algebraic closure, then all assertions of Theorem~\ref{alpureextat} hold. 
\end{theorem}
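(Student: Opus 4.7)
The plan is to derive Theorem~\ref{MTatenh2} from Theorems~\ref{pureextat} and~\ref{alpureextat} by identifying $\cV$ with $\cV_p$ (respectively $\cV_{ap}$) and $\cA$ with $\cA_p$ (respectively $\cA_{ap}$) under each hypothesis. In both cases the hypothesis forces $v_0 K$ to be divisible, which is precisely the precondition needed for the bijections in part~4) of Theorem~\ref{pureextat} and part~4') of Theorem~\ref{alpureextat}.

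First I would handle the case where $K$ is algebraically closed. Here $v_0 K$ is divisible. Proposition~\ref{acpure} directly gives $\cV = \cV_p$. To see that $\cA = \cA_p$ as well, I would take an arbitrary non-trivial ${\bf A} \in \cA$, invoke the surjectivity of~(\ref{v->at}) supplied by Theorem~\ref{MTatenh1} to obtain some $v \in \cV = \cV_p$ with $\appr_v(x,K) = {\bf A}$, and then read off from parts~1)--3) of Theorem~\ref{pureextat} that ${\bf A}$ is transcendental immediate, value-extending, or residue-extending; that is, ${\bf A} \in \cA_p$. (One can also verify $\cA \subseteq \cA_p$ directly: by Lemma~\ref{nfvfreal} an algebraic immediate approximation type over $K$ would be realized by some element of an algebraic extension, hence of $K$ itself, contradicting $\bigcap {\bf A} = \emptyset$.) Once $\cV = \cV_p$, $\cA = \cA_p$, and $v_0 K$ is divisible are in hand, part~4) of Theorem~\ref{pureextat} gives the bijection, and the remaining assertions of that theorem transfer verbatim to $\cV$ and $\cA$.

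Next I would handle the case where $(K,v_0)$ lies dense in its algebraic closure. Proposition~\ref{denspure} supplies the three facts needed: $v_0 K$ is divisible, $Kv_0$ is algebraically closed, and $\cV = \cV_{ap}$. Applying Theorem~\ref{MTatenh1} again to an arbitrary non-trivial ${\bf A} \in \cA$, I obtain some $v \in \cV = \cV_{ap}$ realizing ${\bf A}$. Since $v$ is almost pure, the classification in part~1') of Theorem~\ref{alpureextat} together with parts~2) and~3) of Theorem~\ref{pureextat} (which are inherited by Theorem~\ref{alpureextat}) shows that ${\bf A}$ is transcendental immediate, an algebraic completion type, value-extending, or residue-extending; in other words, ${\bf A} \in \cA_{ap}$. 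With $\cV = \cV_{ap}$, $\cA = \cA_{ap}$, and $v_0 K$ divisible, part~4') of Theorem~\ref{alpureextat} yields the desired bijection, and the remaining assertions of that theorem apply to $\cV$ and $\cA$.

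The main obstacle is the verification $\cA \subseteq \cA_{ap}$ in the dense case, since it requires careful bookkeeping of which kind of approximation type each kind of almost pure extension produces; once the case division in Theorem~\ref{alpureextat} is correctly matched to the trichotomy for non-trivial approximation types in Lemma~\ref{niatl}, the result follows. Otherwise the proof is a matter of stitching together already-established results---Propositions~\ref{acpure} and~\ref{denspure}, Theorem~\ref{MTatenh1}, and Theorems~\ref{pureextat}/\ref{alpureextat}---rather than introducing any substantially new argument.
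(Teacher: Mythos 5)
Your proposal is correct and follows essentially the same route as the paper: in both cases one uses Proposition~\ref{acpure} (resp.\ Proposition~\ref{denspure}) to get $\cV=\cV_p$ (resp.\ $\cV=\cV_{ap}$) and divisibility of $v_0K$, then combines the surjectivity from Theorem~\ref{MTatenh1} with part~4) of Theorem~\ref{pureextat} (resp.\ part~4') of Theorem~\ref{alpureextat}) to conclude $\cA=\cA_p$ (resp.\ $\cA=\cA_{ap}$) and obtain the bijection. Your explicit verification that every non-trivial approximation type lands in $\cA_p$ or $\cA_{ap}$ merely spells out what the paper compresses into one sentence, so there is no substantive difference.
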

\begin{proof}
If $K$ is algebraically closed, then $v_0K$ is divisible and Proposition~\ref{acpure} 
shows that each extension 
$(K(x)|K,v)$ is pure, hence $\cV=\cV_p$ and all assertions of Theorem~\ref{pureextat} hold. 
Combining the surjectivity of the function (\ref{v->at}) proven in Theorem~\ref{MTatenh1}
with the bijectivity of the function in part 4) of Theorem~\ref{pureextat} shows that 
$\cA=\cA_p$ and that the function (\ref{v->at}) is a bijection.

If $(K,v)$ lies dense in its algebraic closure, then by Proposition~\ref{denspure}, 
$v_0K$ is divisible and each
extension $(K(x)|K,v)$ is almost pure, hence $\cV=\cV_{ap}$ and all assertions of
Theorem~\ref{alpureextat} hold. Combining the surjectivity of the function (\ref{v->at}) 
with the bijectivity of the function in part 4') of Theorem~\ref{alpureextat} shows that 
$\cA=\cA_{ap}$ and that also in this case, the function (\ref{v->at}) is a bijection. 
\end{proof}

\bn

\end{document}